\theoremstyle{plain}
\newtheorem{thm}{Theorem}[section]
\newtheorem{lem}[thm]{Lemma}
\newtheorem{cor}[thm]{Corollary}
\newtheorem*{claim*}{Claim}
\newtheorem{con}[thm]{Conjecture}
\theoremstyle{remark}
\newtheorem{rem}[thm]{Remark}
\newtheorem{qun}[thm]{Question}
\newtheorem{exm}[thm]{Example}
\newtheorem{dfn}[thm]{Definition}
\numberwithin{equation}{section}
\numberwithin{table}{section}
\newcommand{\N}{\mathbb{N}}
\newcommand{\Z}{\mathbb{Z}}
\newcommand{\Q}{\mathbb{Q}}
\newcommand{\F}{\mathbb{F}}
\newcommand{\C}{\mathbb{C}}
\newcommand{\tensor}{\otimes}
\newcommand{\spl}{\mathfrak{sl}}
\newcommand{\gl}{\mathfrak{gl}}
\newcommand{\mff}{\mathfrak{f}}
\renewcommand{\epsilon}{\varepsilon}
\renewcommand{\phi}{\varphi}
\renewcommand{\theta}{\vartheta}
\newcommand{\Qp}{\mathbb{Q}_{p}}
\newcommand{\ideal}{\triangleleft}
\newcommand{\Zp}{\mathbb{Z}_{p}}
\newcommand{\bsm}{\boldsymbol{m}}
\newcommand{\LFp}{L(\mathbb{F}_{p})}
\DeclareMathOperator{\End}{End}
\DeclareMathOperator{\gr}{gr}
\DeclareMathOperator{\Gr}{Gr}
\DeclareMathOperator{\ad}{ad}
\DeclareMathOperator{\rk}{rk}
\DeclareMathOperator{\SL}{SL}
\def \idealgr {\triangleleft_\textup{gr}}
\def \Fp {\ensuremath{\mathbb{F}_p}}
\def \Fq {\ensuremath{\mathbb{F}_q}}
\def \Zp  {\mathbb{Z}_p}
\begin{document}
	
	\title[Zeta functions of $\Fp$-Lie algebras and finite $p$-groups]{Zeta functions of $\Fp$-Lie algebras and finite $p$-groups}
	\author{Seungjai Lee} \address{The Research Institute of Basic Sciences, Seoul National University\\
		Seoul, 08826, South Korea}
		\email{seungjai.lee@snu.ac.kr}

	

\begin{abstract}
	We study zeta functions enumerating subalgebras or ideals of Lie algebras over finite field of prime order $\mathbb{F}_p$.  We first develop a general blueprint  method for computing zeta functions of $\mathbb{F}_p$-Lie algebras, and demonstrate its practical applications in detail to obtain explicit formulas for various interesting new examples that are not covered in any known literature yet. For nilpotent cases this also provides zeta functions counting subgroups and normal subgroups of finite $p$-groups of exponent $p$ for almost all primes via the Lazard correspondence. We investigate their connections to the study of finite $p$-groups, and discuss what can be deduced from these finite Dirichlet polynomials.  
\end{abstract}

	\maketitle

	\thispagestyle{empty}

\section{Introduction}
\subsection{Background and motivations}
Let $L$ be a ring, additively isomorphic to $\Z^{n}$ for some $n$, and for $*\in\{\leq,\triangleleft\}$, let 
\[a_{m}^{*}(L):=|\{H*L\mid|L:H|=m\}|.\]
In their seminal paper \cite{GSS/88}, Grunewald, Segal, and Smith defined the \textit{subalgebra ($*=\leq$)} and \textit{ideal ($*=\triangleleft$) zeta functions of $L$} to be the Dirichlet generating series
\[\zeta_{L}^{*}(s):=\sum_{m=1}^{\infty}a_{m}^{*}(L)m^{-s},\]
where $s$ is a complex variable (here, and in what follows, $*$ stands for $\leq$ or $\triangleleft$).

This admits a natural Euler decomposition \[\zeta_{L}^{*}(s)=\prod_{p\textrm{ prime}}\zeta_{L,p}^{*}(s),\] where $\zeta_{L,p}^{*}(s)=\sum_{i=0}^{\infty}a_{p^{i}}^{*}(L)p^{-is}$. 
Note that for each prime $p$, the local factor $\zeta_{L,p}^{*}(s)$ is also the zeta function of the $\Zp$-algebra $L(\Zp):=L\tensor\Zp$, where $\Zp$ denotes the ring of $p$-adic integers. We call these local factors the \textit{local subalgebra/ideal zeta functions of $L$}. It is known that each of these local factors $\zeta_{L(\Zp)}^{*}(s)$ is a rational function over $\Q$ in $p^{-s}$ (\cite[Theorem 3.5]{GSS/88}). Over the last few decades, zeta functions of Lie rings have become important tools in asymptotic group and ring theory. Good surveys of the topics are given in \cite{duS-ennui/02,duSWoodward/08,Voll/11,Voll/15}.

One of the fundamental questions in this field is  how these (rational) local factors $\zeta_{L(\Zp)}^{*}(s)$ behave as we vary $p$: called the \textit{uniformity} problem. 

\begin{dfn}\label{def:uniform.zp}
	The (global) zeta function $\zeta_{L}^{*}(s)$ is \textit{finitely uniform} if there exist rational functions $W_{1}(X,Y),\ldots,W_{k}(X,Y)\in\Q(X,Y)$ for $k\in\mathbb{N}$ such that, for every prime $p$, 
	\[\zeta_{L(\Zp)}^{*}(s)=W_{i}^{*}(p,p^{-s})\]
	for some $i\in\{1,\ldots,k\}$. It is  \textit{uniform} if $k=1$, and \textit{non-uniform} if it is not finitely uniform.  
\end{dfn}

In \cite{duSG/00}, du Sautoy and Grunewald proved that the behavior of local factors are related to the classical problem of counting points on varieties mod $p$. 

\begin{thm}[\cite{duSG/00}, Theorem 1.3]\label{thm:cone.int.p}
	Let $L$ be a nilpotent Lie ring. There exists an algebraic variety $Y^{*}$ defined over $\mathbb{Q},$ with irreducible subvarieties $E_{i}^{*},$ $i\in T^{*}:=\{1,\,\ldots,\,t^{*}\},$
	all of which are smooth and intersect normally, and rational functions
	$P_{I}^{*}(X,\,Y)\in\mathbb{Q}(X,\,Y),$ $I\subseteq T^{*}$ such
	that, for almost all primes $p,$ 
\begin{equation}\label{eq:duSG}
	\zeta_{L,p}^{*}(s)=\sum_{I\subseteq T^{*}}c_{p,\,I}^{*}P_{I}^{*}(p,\,p^{-s}),
\end{equation}
	where $c_{p,\,I}^{*}=\left|\left\{ a\in\overline{Y^{*}}(\mathbb{F}_{p}):a\in\overline{E_{i}^{*}}\textrm{\;\ if and only if }i\in I\right\} \right|$.
\end{thm}

Counting points on varieties mod $p$ is a very wild problem in general. For instance, du Sautoy  showed in \cite{duS-ecI/01} that there exists a class-2 nilpotent Lie ring $L_E$ of rank 9 with non-uniform zeta functions. Its local zeta functions $\zeta_{L_E,p}^{*}(s)$ encodes the arithmetic of an elliptic curve. It is still a mystery to understand what kind of varieties can appear in the description of zeta functions of Lie rings given in \eqref{eq:duSG}.

One of the main obstacles to understand the nature of $\zeta_{L(\Zp)}^{*}(s)$ is that they are in general too difficult to compute. Still, together with Stanley's result on rational functions \cite[Theorem 4.1.1]{Stanley/12}, the rationality of $\zeta_{L,p}^{*}(s)$  implies that the sequence $a_{p^{i}}^{*}(L)$ satisfies a strong regularity property, i.e., the number of finite-index subalgebras or ideals of $L(\Zp)$ are already determined by the number of subalgebras or ideals in some finite quotients of $L(\Zp)$.  

\subsection{Main results and organization} 

In this light, the aim of this article is to introduce and initiate the study of zeta functions enumerating subalgebras or ideals of various $\Fp$-Lie algebras.

Let $\Fq$ denote a finite field on $q=p^r$ elements, where $p$ is a prime, and let $A$ denote the $\Fq$-algebra, additively isomorphic to $\Fq^{n}$. One can analogously define the \textit{(subalgebra or ideal) zeta functions of $\Fq$-algebras} as a finite Dirichlet polynomial
\begin{equation}\label{eq:fp.zeta}
	\zeta_{A}^{*}(s):=\sum_{B*A}|A:B|^{-s}=\sum_{i=0}^{n}a_{q^{i}}^{*}(A)q^{-is},
\end{equation}
where $a_{q^{i}}(A)$ denote the number of subalgebras or ideals of $A$ of codimension $i$.
	
\begin{exm}
	For $n\in\N$, we have
		\begin{align*}
			\zeta_{\Fq^{n}}(s):=\zeta_{\Fq^{n}}^{\leq}(s)=\zeta_{\Fq^{n}}^{\ideal}(s)&=\sum_{i=0}^{n}a_{q^{i}}^{*}(\Fq^{n})q^{-is}=\sum_{i=0}^{n}\binom{n}{i}_{q}\,q^{-is},
		\end{align*}
		where 
		\[\binom{n}{i}_{q}=\begin{cases}\frac{(1-q^n)(1-q^{n-1})\cdots(1-q^{n-i+1})}{(1-q)(1-q^2)\cdots(1-q^i)}&i\leq n,\\0&i>n,\end{cases}\] is the Gaussian binomial coefficient that counts the number of subspaces of (co)dimension $i$ in a vector space of dimension $n$ over $\Fq$. 
\end{exm}

In this article, we concentrate on the situation where $q=p$, $L\cong\Z^{n}$ is a Lie ring, and $A=L(\Fp)$ is an $\Fp$-Lie algebra, additively isomorphic to $\Fp^{n}$. In this case, one can view $\zeta_{L(\Fp)}^{*}(s)$ as the ``simplest finite approximations" of $\zeta_{L(\Zp)}^{*}(s)$ (which is why we de define them as Dirichlet polynomials), in the sense that $\Fp=\Z/p\Z$ can be seen as the ``first layer" of $\Zp={\varprojlim}\Z/p^{i}\Z$, and all the subobjects enumerated by $\zeta_{L(\Fp)}^{*}(s)$ are also enumerated by $\zeta_{L(\Zp)}^{*}(s)$. For instance, from the computations in this article one can observe  that if both $\zeta_{L(\Zp)}^{*}(s)$ and $\zeta_{L(\Fp)}^{*}(s)$ are known, then 
\[a_{p^{i}}^{*}(L(\Fp))\leq a_{p^{i}}^{*}(L(\Zp))\]
for $0\leq i \leq n$ as expected, with $a_{1}^{*}(L(\Fp))=a_{1}^{*}(L(\Zp))=1$ and $a_{p}^{*}(L(\Fp))=a_{p}^{*}(L(\Zp))$.

Also, let us analogously define the $\Fp$-uniformity as follows:\begin{dfn}\label{def:uniform.fp}
	The (global) zeta function $\zeta_{L}^{*}(s)$ is $\Fp$-\textit{finitely uniform} if there exist polynomials $W_{1}^{*}(X,Y),\ldots,W_{k}^{*}(X,Y)\in\Q[X,Y]$ for $k\in\mathbb{N}$ such that, for every prime $p$, 
	\[\zeta_{L(\Fp)}^{*}(s)=W_{i}^{*}(p,p^{-s})\]
	for some $i\in\{{1,\ldots,k}\}$, $\Fp$-\textit{uniform} if $k=1$, and $\Fp$-\textit{non-uniform} if it is not $\Fp$-finitely uniform.
\end{dfn}

In this way the `classical' uniformity property, as defined in Definition \ref{def:uniform.zp}, can be considered as `$\Zp$-uniformity'. Comparing the behavior of $\zeta_{L(\Fp)}^{*}(s)$ and $\zeta_{L(\Zp)}^{*}(s)$ over $p$ would tell us whether ``all the actions happen at the top or not". 
As we show later, in many cases (e.g. Theorem \ref{thm:Elliptic}) studying $\zeta_{L(\Fp)}^{*}(s)$ was enough to predict the behavior of $\zeta_{L(\Zp)}^{*}(s)$. In particular, so far we could not find a single example of a Lie ring $L$ whose zeta function $\zeta_{L}^{*}(s)$ has different uniformity over $\Zp$ and $\Fp$, suggesting the usefulness of $\zeta_{L(\Fp)}^{*}(s)$ as an approximation of $\zeta_{L(\Zp)}^{*}(s)$.

This article is arranged as follows. In Section \ref{sec:2}, by translating the enumeration problems of subalgebras and ideals of fixed index into the calculations of certain systems of equations, we give a constructive and explicit method to compute $\zeta_{L(\Fp)}^{*}(s)$ in the most general setting, and compare this with \cite[Theorem 1.3]{duSG/00} on  $\zeta_{L(\Zp)}^{*}(s)$. We then show how one can directly use this method to compute the zeta functions of the Heisenberg, $\spl_2$, and $\gl_2$. In Section \ref{sec:3}, we first show in Section \ref{subsec:nil.fp} how one can modify and use this method developed in Section \ref{sec:2} in fortunate cases, even for some infinite families of $\Fp$-Lie algebras. Then, with Lemma \ref{lem:idealc2}, we discuss in Section \ref{subsec:c2} how things get better when we restrict our $L$ to be class-2 nilpotent Lie rings. In Section \ref{sec:4} and Section \ref{sec:5} we further record explicit computations of $\zeta_{L(\Fp)}^{*}(s)$ for various interesting $\Fp$-Lie algebras. In particular:

\begin{itemize}
	\item We compute subalgebra zeta functions for a semisimple $\Fp$-Lie algebras $\spl_{2}(\Fp)$ and $\gl_2(\Fp)$ (Theorem \ref{thm:sl2.fp} and \ref{thm:gl2.fp}).
	\item We enumerate subalgebras and ideals of the maximal class $\Fp$-Lie algebras $M_{c}$ for all $c\geq1$ (Theorem \ref{thm:fp.mc.ideal} and \ref{thm:fp.mc}).
	\item We enumerate ideals of the family of free class-$c$ nilpotent $\Fp$-Lie algebras on $d$ generators $\mff_{c,d}$ for  $\{(c,d)=(2,2),(3,2),(4,2)\}$ (Theorem \ref{thm:fc2}).
	\item We adapt Voll's method for computing ideal zeta functions of certain class-2 $\Fp$-Lie algebras (Lemma \ref{lem:idealc2}) and enumerate ideals of the Grenham ring $\mathcal{G}_{n}$ for $n\geq1$ and $\mff_{2,d}$ for $d\geq1$ (Theorem \ref{thm:fp.grenhamideal} and \ref{thm:fp.f2d}).
	\item We enumerate ideals of du Sautoy's elliptic curve example $L_{E}$ and other interesting nilpotent $\Fp$-Lie algebras connected to Higman's PORC conjecture (Theorem \ref{thm:Elliptic}, \ref{thm:Leenp8}, and \ref{thm:vl}).
	\item We compute ideal zeta functions for the family of solvable $\Fp$-Lie algebras $\textrm{tr}_{n}(\Fp)$, the set of $n\times n$ upper-triangular matrices,  for $1\leq n\leq4$ (Theorem \ref{thm:tr(n)}).
	\item We compute the $\Fp$ approximation of graded ideal zeta functions of 26 ``fundamental graded" Lie algebras of dimension at most 6, as classified in \cite{Kuzmich/99} (Section \ref{sec:5} Table \ref{tab:graded}).
\end{itemize}
Further questions will be discussed in Section \ref{sec:6}. 

Most of these computations have not been discussed in any literature before. To the author's knowledge, there is no known work in the literature that systematically investigated the zeta functions of $\Fp$-Lie algebras or $\Fq$-algebras in general. We hope this article provides a reasonable starting point for a more general theory.

\subsection{Applications and related works}
\subsubsection{Lie algebras over $\Fp$ and finite $p$-groups of exponent $p$}
The original motivation to introduce zeta functions of rings was to use them to study the subgroup growth problems in group theory. As the study of subgroup growth of finitely generated nilpotent groups can be linearized via the Mal'cev correspondence \cite[Theorem 4.1]{GSS/88}, one could translate it to the subring growth of suitable nilpotent Lie rings. For this reason most of the known results on zeta functions of rings have been focusing on nilpotent Lie rings. 

If we restrict $L(\Fp)$ to be a class-$c$ nilpotent  $\Fp$-Lie algebra of dimension $n$, then similar to the Lie ring case, for prime $p>c$ the Lazard correspondence gives $L(\Fp)$ a corresponding finite $p$-group $G(L(\Fp))$ of class-$c$, order $p^n$ and exponent $p$ using the Baker-Campbell-Hausdorff formula (\cite[Chapter 10]{Khukhro/98}). Like \cite[Theorem 4.1]{GSS/88}, \cite[Example 10.24]{Khukhro/98} suggests that for almost all primes $p>c$ we have
\[\zeta_{G(L(\Fp))}^{*}(s)=\zeta_{L(\Fp)}^{*}(s).\]
This implies that the method we use in this paper also allows us to count subgroups or normal subgroups of fixed-index in a given finite $p$-group of exponent $p$. For instance, all the zeta functions of nilpotent $\Fp$-Lie algebras $L(\Fp)$ in this article exhibits that
\begin{itemize}
	\item $a_{p^{i}}^{*}(L(\Fp))\neq0$ for all $0\leq i \leq n$,
	\item $a_{p}^{*}(L(\Fp))=\binom{r}{1}_p$, where $r$ is the number of generators of $L$,
\end{itemize}
which are compatible with the fact that for any finite $p$-group $G$ there are at least 1 normal subgroup of each index, and the number of maximal subgroups (hence normal and index $p$) is $\sum_{k=0}^{r-1}p^{k}$, where $r$ is the minimum size of a generating set for $G$.

\subsubsection{Applications on Higman's PORC conjecture}
Let $f_{n}(p)$ denote the number of isomorphism classes of $p$-groups of order $p^{n}$. In 1960, Higman \cite{HigmanII/60} conjectured that for a fixed $n$, there exists a fixed integer $N$ and finitely
many polynomials $g_{i}(x)\;(i=1,2,\ldots,N)$ such that if
$p\equiv i\bmod N$ then 
\[
f_{n}(p)=g_{i}(p).
\]
This is now known as Higman's PORC (Polynomial On Residue Classes) Conjecture. This conjecture is currently known to be true for $n\leq7$, but still remains open for $n\geq8$. For a detailed account with recent works see \cite{BEO/02}, \cite{VL/2012}, \cite{VL/2015}, or \cite{LVL22}.

Connection between the uniformity of ideal zeta functions of nilpotent Lie rings and Higman's PORC conjecture was first observed by du Sautoy, where he conjectured \cite[Conjecture 5.11]{duS/02} that proving the $\Zp$-uniformity of the ideal zeta functions $\zeta_{\mff_{c,d}}^{\triangleleft}(s)$ of $\mff_{c,d}$, a free class-$c$ nilpotent Lie ring on $d$ generators, for all $c,d\in\N$ might contribute to prove Higman's PORC conjecture. Grunewald, Segal, and Smith proved \cite{GSS/88} that $\zeta_{\mff_{2,d}}^{\triangleleft}(s)$ is always $\Zp$-uniform for all $d\in\N$. However, for $c\geq2$ we only know one explicit formula for $\zeta_{\mff_{3,2}}^{\triangleleft}(s)$, which also turned out to be $\Zp$-uniform.

After du Sautoy's discovery, several explicit  observations (e.g, \cite{DuSVL/2012,Lee/2016,Lee/20arxiv2}) have been made to support the connection between the $\Zp$-uniformity of zeta functions and Higman's PORC conjecture. In Section \ref{sec:4}, we compute the zeta functions of Lie rings connected to PORC conjecture over $\Fp$. Our new computations of  $\zeta_{L(\Fp)}^{*}(s)$ may provide better insight to understand the uniformity problem and this open conjecture.

\subsubsection{Zeta functions of submodules and graded submodules under endomorphisms}

Let $R$ be a commutative ring, $\mathcal{V}$ a finitely generated $R$-module, and $\Omega\subseteq \End_R(\mathcal{V})$ a set of $R$-endomorphisms of
$\mathcal{V}$. In \cite{Rossmann/16} Rossmann defined the (\emph{submodule}) \emph{zeta function of $\Omega$ acting on
	$\mathcal{V}$} to be 
$$\zeta_{\Omega \curvearrowright \mathcal{V}}(s) =
\sum_{\mathcal{U}\leq \mathcal{V}} |\mathcal{V}:\mathcal{U}|^{-s},$$
the Dirichlet generating series enumerating the
finite-index $\Omega$-invariant submodules $\mathcal{U}$ of
$\mathcal{V}$. If, in addition, we fix an $R$-module decomposition
$\mathcal{V} = \mathcal{V}_1 \oplus \dots \oplus \mathcal{V}_a$,  then the associated (\emph{graded
	submodule}) \emph{zeta function $\zeta^{\textup{gr}}_{\Omega
		\curvearrowright \mathcal{V}}(s)$}, is enumerating
\emph{graded}  $\Omega$-invariant submodules
of~$\mathcal{V}$.

For now, let $R=\Z$. It is easy to see that the ideal zeta function $\zeta_{L}^{\triangleleft}(s)$ is the zeta function of the \emph{adjoint representation} $\ad(L)\subseteq \End_\Z(L)$ acting on $L$:
\[\zeta_{L}^{\triangleleft}(s)=\zeta_{\ad(L)\curvearrowright L}(s).\]
We can analogously define the \emph{graded ideal zeta function of L} as
\[\zeta_{L}^{\triangleleft\gr}(s)=\zeta_{\ad(L)\curvearrowright L}^{\gr}(s).\] 

One can think of these graded ideal zeta functions as another approximations of $\zeta_{L}^{\triangleleft}(s)$. In \cite{LeeVoll/18}, Voll and the author studied the local graded ideal zeta functions of $\mff_{c,d}$, for some values of $c$ and $d$. In particular, we calculated the explicit formulas for $\zeta_{\mff_{3,3}(\Zp)}^{\idealgr}(s)$ and $\zeta_{\mff_{4,2}(\Zp)}^{\idealgr}(s)$, and studied their properties. Note that we still do not know the formulas for local (ungraded)  ideal zeta functions $\zeta_{\mff_{3,3}(\Zp)}^{\ideal}(s)$ and $\zeta_{\mff_{4,2}(\Zp)}^{\ideal}(s)$ yet.

Here one can also consider the graded ideal zeta functions of $\Fp$-Lie algebras, namely $\zeta_{L(\Fp)}^{\triangleleft\gr}(s)$. In this article we compute the graded ideal zeta functions of 26 fundamental graded $\Fp$-Lie algebras of dimension at most 6  (Section \ref{sec:5} Table \ref{tab:graded}).

\begin{rem}\label{rem:fp.lie}
	As we mentioned in Section 1.2 the arguments in this article actually work on general finite dimensional $\Fq$-algebras and their zeta functions as defined in \eqref{eq:fp.zeta} (see Remark \ref{rem:Zp.and.Fp}). However, to concentrate on the comparison among the zeta functions of Lie rings and Lie algebras over $\Z$, $\Zp$, and $\Fp$, and to stay compatible with the study of finite $p$-groups, we decided to concentrate on $\Fp$-Lie algebras.
\end{rem}

\subsection{Assumptions and Notations}
We write $\N=\{1,2,\dots\}$ for the set of all natural numbers and $\N_{0}$ for the set of all natural numbers including zero. Given $n\in\N$, we write $[n]$ for $\{1,2,\dots,n\}$.

Throughout this paper, unless stated otherwise, $L$ will denote a Lie ring additively isomorphic to $\Z^n$, $L(\Zp)\cong L\tensor\Z_p$, and $L(\Fp)\cong L\tensor\F_p$. By a class of Lie ring we always mean the nilpotent class. Whenever there is a presentation for a group or a ring, we always assume that up to anti-symmetry, all other unlisted commutators are trivial. We write $t:=p^{-s}$, where $s$ is a complex variable.

\section{Computing zeta functions of $\Fp$-Lie algebras}\label{sec:2}
Let $K$ be a field. The \textit{Grassmannian} $\Gr_{K}(n,m)$ is the set of all $m$-dimensional subspaces of $K^{n}$. Every element of $\Gr_{K}(n,m)$, an $m$-dimensional subspace, is a `point' of $\Gr_{K}(n,m)$ as a projective variety, and it can be described as a span of some $m$ independent row vectors of length $n$, in which we can arrange in a $m\times n$ matrix.

Let $L$ be a Lie ring and recall
\[\zeta_{L(\Fp)}^{*}(s)=\sum_{i=0}^{n}a_{p^{i}}^{*}(L(\Fp))t^{i}.\]
Since $L(\Fp)$ is now an $n$-dimensional vector space over $\Fp$, each coefficient $a_{p^{i}}^{*}(L(\Fp))$ enumerates the number of subspaces of codimension $i$ in $L(\Fp)\cong \Fp^{n}$ satisfying certain conditions to be subalgebras or ideals. Since being a subalgebra or an ideal is clearly a Zariski-closed condition, these coefficients enumerate the number of $\Fp$-points of a closed subvariety of a Grassmannian, which can be summarized as:
\begin{thm}\label{thm:fp.general}
There exist finitely many varieties $U_{1}^{*},\ldots,U_{h}^{*}$ defined over $\mathbb{Q},$  and polynomials $W_{1}^{*}(X,Y),\ldots,W_{h}^{*}(X,Y)\in\mathbb{Z}[X,Y]$ such that, for almost all primes $p$, 
\begin{equation*}
	\zeta_{L(\Fp)}^{*}(s)=\sum_{j=1}^{h}|\overline{U_{j}^{*}}(\Fp)|W_{j}^{*}(p,t).
\end{equation*} 
\end{thm}
However, like the study of (classical) zeta functions of Lie rings, the real question is whether one can explicitly find those varieties $U_{j}^{*}$ for a given $L(\Fp)$. The aim of this section is to provide a constructive method for this.

Let $\mathcal{B}=(e_1,\ldots,e_{n})$ be a basis of $L$. Normally, each $m$-dimensional subspace can be represented by a unique full-rank $m\times n$ matrix in Reduced Row Echelon Form (RREF) with respect to this basis. The subset of the Grassmannian where its elements have a particular RREF constitutes a \textit{Schubert Cell}, and $\Gr_{K}(n,m)$ is a disjoint union of Schubert Cells. To make computations easier, instead of a $m\times n$ matrix in a strict RREF, we choose to represent each subspace in a slightly modified way, by a unique form of $n\times n$ upper-triangular matrix of rank $m$ where each leading entry 1 in a non-zero row lies on the diagonal, as follows.
\begin{lem}\label{lem:matrix}
	Let $0_{i}$ denote an $i\times i$ zero matrix, $I_{j}$ an $j\times j$ identity matrix, and $D_{i,j}$ an arbitrary $i\times j$ matrix over $\mathbb{F}_{p}$. Then there is one-to-one correspondence between the subspace of $\LFp$ of index $p^{k}$ and the upper triangular matrix 
	\[M=(m_{i,j})_{i,j\in[n]}=\left(\begin{matrix}0_{a_1}&0&0&0&\cdots&0&0\\0&I_{b_1}&D_{b_1,a_2}&0&\cdots&D_{b_1,a_r}&0\\0&0&0_{a_2}&0&\cdots&0&0\\0&0&0&I_{b_2}&\cdots&D_{b_2,a_r}&0\\0&0&0&0&\ddots&0&0\\0&0&0&0&0&0_{a_r}&0\\0&0&0&0&0&0&I_{b_r}\end{matrix}\right)\in\textnormal{Tr}_n(\mathbb{F}_{p})\]
	of rank $n-k$, where  $\sum_{i=1}^{r}(a_{i})=k$,  $a_{1},b_{r}\geq0$, and $a_{2}\ldots,a_{r},b_{1},\ldots,b_{r-1}>0$.
\end{lem}

\begin{exm}
	Let $V$ be a 3-dimensional vector space over $\Fp$. In terms of Schubert cells, any 2-dimensional subspace $W$ of $V$ would be represented by
\begin{align*}
		\begin{pmatrix}
		1&0&a_{1,3}\\
		0&1&a_{2,3}
	\end{pmatrix},
		\begin{pmatrix}
	1&a_{1,2}&0\\
	0&0&1
\end{pmatrix},\textrm{ or }
		\begin{pmatrix}
	0&1&0\\
	0&0&1
\end{pmatrix},
\end{align*}
where $a_{1,2},a_{1,3},a_{2,3}\in\Fp$. In our setting, $W$ would be represented by
\begin{align*}
	\begin{pmatrix}
		1&0&a_{1,3}\\
		0&1&a_{2,3}\\
		0&0&0
	\end{pmatrix},
	\begin{pmatrix}
		1&a_{1,2}&0\\
		0&0&0\\
		0&0&1
	\end{pmatrix},\textrm{or}
	\begin{pmatrix}
		0&0&0\\
		0&1&0\\
		0&0&1
	\end{pmatrix}.
\end{align*} 
\end{exm}
Lemma \ref{lem:matrix} implies that for any given additive subgroup $M$ of index $p^k$ in $\LFp$, there are uniquely determined numbers $r\in\N$ and $\underline{a}=(a_{1},\ldots,a_{r}),\underline{b}=(b_{1},\ldots,b_{r})$ such that 
\[(m_{1,1},m_{2,2}\ldots,m_{n,n})=(0^{(a_{1})},1^{(b_{1})},\dots,0^{(a_{r})},1^{(b_{r})}).\] 

Let us call $(\underline{a},\underline{b})$ the \textit{diagonal type} of $M$, and write $d(M)=(\underline{a},\underline{b})$. Let $M_{\underline{a},\underline{b}}$ denote a matrix of diagonal type $(\underline{a},\underline{b})$,  and let $\mathcal{M}_{\underline{a},\underline{b}}$ denote the set of $n\times n$ upper triangular matrices over $\Fp$ with diagonal type $(\underline{a},\underline{b})$.

Like Schubert cells, let us call the subset of the Grassmannian where its elements have a particular diagonal type a \textit{diagonal cell}. So each $\mathcal{M}_{\underline{a},\underline{b}}$ is a diagonal cell of diagonal type $(\underline{a},\underline{b})$. Lemma \ref{lem:matrix} implies that the Grassmannian is a disjoint union of diagonal cells. Since each $a_{i},b_{i}$ can be either $0$ or $1$, there are $2^{n}$ diagonal cells of distinct diagonal types, and each of them satisfies $\left|\mathcal{M}_{\underline{a},\underline{b}}\right|=\sum_{i=1}^{r}\sum_{j=1}^{i-1}a_{i}b_{j}.$

\begin{exm} \label{exm:Heisenberg} Let 
	\[H=\langle x_{1},x_{2},x_{3}\,\mid\,[x_1,x_2]=x_3\rangle\]
	be the Heisenberg Lie ring. Then  $H(\Fp)$ is a 3-dimensional $\Fp$-Lie algebra of order $p^3$. The subspaces of  $H(\Fp)$ with index $p^k$ are precisely the following, represented by the diagonal cells:
	\begin{itemize}
		\item $\left(\begin{matrix}1&0&0\\&1&0\\&&1\\\end{matrix}\right)$ if $k=0$,
		\item $\left(\begin{matrix}0&0&0\\&1&0\\&&1\\\end{matrix}\right)$ or $\left(\begin{matrix}1&m_{1,2}&0\\&0&0\\&&1\\\end{matrix}\right)$ or
		$\left(\begin{matrix}1&0&m_{1,3}\\&1&m_{2,3}\\&&0\\\end{matrix}\right)$ if $k=1$,
		\item $\left(\begin{matrix}0&0&0\\&0&0\\&&1\\\end{matrix}\right)$ or $\left(\begin{matrix}0&0&0\\&1&m_{2,3}\\&&0\\\end{matrix}\right)$ or
		$\left(\begin{matrix}1&m_{1,2}&m_{1,3}\\&0&0\\&&0\\\end{matrix}\right)$ if $k=2$,
		\item $\left(\begin{matrix}0&0&0\\&0&0\\&&0\\\end{matrix}\right)$ if $k=3$.
	\end{itemize}
	Here we have $8=2^3$ distinct diagonal cells.
\end{exm}

Unfortunately, not every element of $\mathcal{M}_{\underline{a},\underline{b}}$ may give rise to subalgebras or ideals of $\LFp$. For a given $M_{\underline{a},\underline{b}}\in\mathcal{M}_{\underline{a},\underline{b}}$, we may consider the rows $\bsm_{1},\ldots,\bsm_{n}$ of this matrix to be additive generators of a subgroup of $L(\Fp)$. This will be a subalgebra of $L(\Fp)$ if
\begin{equation*}\label{eq:subalg.con}
	[\bsm_{i},\bsm_{j}]\in\langle\bsm_{1},\ldots,\bsm_{n}\rangle_{\Fp}\;\textrm{for all }1\leq i<j\leq n,
\end{equation*}
and an ideal if 
\begin{equation*}\label{eq:ideal.con}
	[\bsm_{i},\bold{e}_{j}]\in\langle\bsm_{1},\ldots,\bsm_{n}\rangle_{\Fp}\;\textrm{for all }1\leq i,j\leq n,
\end{equation*}
where the $n$-tuple $\bold{e}_{j}$ denotes the standard unit vector with 1 in the $j$th entry and zeros elsewhere. Hence we need to find out the conditions which make those matrices become subalgebras or ideals.

Let
\[F_{p}^{\leq}:=\left\{M\in\textnormal{Tr}(\mathbb{F}_{p})\,\mid \Fp^{n}\cdot M\leq L(\Fp)\right\}\]
and 
\[F_{p}^{\triangleleft}:=\left\{M\in\textnormal{Tr}(\mathbb{F}_{p})\,\mid \Fp^{n}\cdot M\ideal L(\Fp)\right\}.\]

Also let $F_{\underline{a},\underline{b}}^{\leq}=F_{p}^{\leq}\cap \mathcal{M}_{\underline{a},\underline{b}}$ and 
$F_{\underline{a},\underline{b}}^{\triangleleft}=F_{p}^{\triangleleft}\cap \mathcal{M}_{\underline{a},\underline{b}}.$ Then we have 
\[\zeta_{\LFp}^{\leq}(s)=\sum_{\underline{a},\underline{b}}\left|F_{\underline{a},\underline{b}}^{\leq}\right|p^{-(\sum_{i=1}^{r}a_{i})s}\]
and
\[\zeta_{\LFp}^{\triangleleft}(s)=\sum_{\underline{a},\underline{b}}\left|F_{\underline{a},\underline{b}}^{\triangleleft}\right|p^{-(\sum_{i=1}^{r}a_{i})s}.\]

For a given \[M_{\underline{a},\underline{b}}=\left(\begin{matrix}0_{a_1}&0&0&0&\cdots&0&0\\0&I_{b_1}&D_{b_1,a_2}&0&\cdots&D_{b_1,a_r}&0\\0&0&0_{a_2}&0&\cdots&0&0\\0&0&0&I_{b_2}&\cdots&D_{b_2,a_r}&0\\0&0&0&0&\ddots&0&0\\0&0&0&0&0&0_{a_r}&0\\0&0&0&0&0&0&I_{b_r}\end{matrix}\right),\]
let
\[M^{\sharp}_{\underline{a},\underline{b}}=\left(\begin{matrix}I_{a_1}&0&0&0&\cdots&0&0\\0&I_{b_1}&-D_{b_1,a_2}&0&\cdots&-D_{b_1,a_r}&0\\0&0&I_{a_2}&0&\cdots&0&0\\0&0&0&I_{b_2}&\cdots&-D_{b_2,a_r}&0\\0&0&0&0&\ddots&0&0\\0&0&0&0&0&I_{a_r}&0\\0&0&0&0&0&0&I_{b_r}\end{matrix}\right),\]
and
\[M^{\flat}_{\underline{a},\underline{b}}=\left(\begin{matrix}0_{a_1}&0&0&0&\cdots&0&0\\0&I_{b_1}&0&0&\cdots&0&0\\0&0&0_{a_2}&0&\cdots&0&0\\0&0&0&I_{b_2}&\cdots&0&0\\0&0&0&0&\ddots&0&0\\0&0&0&0&0&0_{a_r}&0\\0&0&0&0&0&0&I_{b_r}\end{matrix}\right).\]
One can directly check the following result.
\begin{lem}\label{lem:REF}
	For given $r\in\mathbb{N}$, $\underline{a}$ and $\underline{b}$, we have
	\[M_{\underline{a},\underline{b}}M^{\sharp}_{\underline{a},\underline{b}}=M^{\flat}_{\underline{a},\underline{b}}.\]
\end{lem}
 With this we can compute $\left|F_{\underline{a},\underline{b}}^{\leq}\right|$ and $\left|F_{\underline{a},\underline{b}}^{\triangleleft}\right|$. For $i,j\in[n]$, let $C_j$ denote the matrix whose rows are $\bold{c}_{i}=[\bold{e}_{i},\bold{e}_{j}]$. We have $M_{\underline{a},\underline{b}}\in F_{\underline{a},\underline{b}}^{\leq}$ if and only if we can solve for each $1\leq i,j\leq n$ the equation
 \begin{equation}\label{eq:subalg} 
\bold{m}_{i}\left(\sum_{l=j}^{n}m_{j,l}C_{l}\right)=\left(Y^{(1)}_{i,j},\ldots,Y^{(n)}_{i,j}\right)M_{\underline{a},\underline{b}}
\end{equation}
	with $\left(Y^{(1)}_{i,j},\ldots,Y^{(n)}_{i,j}\right)\in\mathbb{Z}^{n}.$ Similarly, $M_{\underline{a},\underline{b}}\in F_{\underline{a},\underline{b}}^{\triangleleft}$ if and only if we can solve for each $1\leq i,j\leq n$ the equation 
	\begin{equation}\label{eq:ideal} 
		\bold{m}_{i}C_{j}=\left(Y^{(1)}_{i,j},\ldots,Y^{(n)}_{i,j}\right)M_{\underline{a},\underline{b}}
	\end{equation}
	with $\left(Y^{(1)}_{i,j},\ldots,Y^{(n)}_{i,j}\right)\in\mathbb{Z}^{n}.$
	
	Let us consider the ideals first. By Lemma \ref{lem:REF} we can rewrite \eqref{eq:ideal} as 
	\[\bold{m}_{i}C_{j}M_{\underline{a},\underline{b}}^{\sharp}=\left(Y^{(1)}_{i,j},\ldots,Y^{(n)}_{i,j}\right)M^{\flat}_{\underline{a},\underline{b}}.\]
	Now, let $g_{ijk}^{\triangleleft}(m_{r,s})$ denote the $k$-th entry of the $n$-tuple $\bold{m}_{i}C_{j}M^{\sharp}_{\underline{a},\underline{b}}$. The way we defined our $M$ implies that $g_{ijk}^{\triangleleft}(m_{r,s})$ is at most degree-2 polynomial in $m_{r,s}$. Since $M^{\flat}_{\underline{a},\underline{b}}=\textrm{diag}(0^{(a_{1})},1^{(b_{1})},\dots,0^{(a_{r})},1^{(b_{r})})$, it is clear that the $k$-th entry of the $n$-tuple $\left(Y^{(1)}_{i,j},\ldots,Y^{(n)}_{i,j}\right)M^{\flat}_{\underline{a},\underline{b}}$ is either 0 or $Y^{(k)}_{i,j}$. Since the condition $g_{ijk}^{\triangleleft}(m_{r,s})=Y^{(k)}_{i,j}$ is redundant, we can conclude that $M_{\underline{a},\underline{b}}$ gives an ideal if and only if we can solve a system of equations $g_{ijk}^{\triangleleft}(m_{r,s})=0$ for $1\leq i,j,k\leq n$ where $m_{k,k}=0$.
	
	Finally, let $\bold{Y}_{\underline{a},\underline{b}}^{\triangleleft}$ denote the variety defined by the corresponding system of equations $g_{ijk}^{\triangleleft}(m_{r,s})=m_{k,k}Y^{(k)}_{i,j}$, and let $c_{\underline{a},\underline{b}}^{\triangleleft}=\mid\bold{Y}_{\underline{a},\underline{b}}^{\triangleleft}(\mathbb{F}_{p})\mid$. Then we get $\left|F_{\underline{a},\underline{b}}^{\triangleleft}\right|=c_{\underline{a},\underline{b}}^{\triangleleft}$.
	
	Hence \[\zeta_{\LFp}^{\triangleleft}(s)=\sum_{\underline{a},\underline{b}}c_{\underline{a},\underline{b}}^{\triangleleft}\cdot p^{-(\sum_{i=1}^{r}a_{i})s}.\]
	
	For subalgebras, one can similarly define $g_{ijk}^{\leq}(m_{rs})$, $\bold{Y}_{\underline{a},\underline{b}}^{\leq}$ and $c_{\underline{a},\underline{b}}^{\leq}$ from \eqref{eq:subalg} and see that 
	\[\zeta_{\LFp}^{\leq}(s)=\sum_{\underline{a},\underline{b}}c_{\underline{a},\underline{b}}^{\leq}\cdot p^{-(\sum_{i=1}^{r}a_{i})s}.\]
We demonstrate how this method works with the following example:
\begin{thm} \label{thm:H.theory} Let $H$ denote the Heisenberg Lie ring as in Example \ref{exm:Heisenberg}. We have
	\begin{align*}
		\zeta_{H(\Fp)}^{\triangleleft}(s)&=1+(1+p)t+t^{2}+t^{3},\\
		\zeta_{H(\Fp)}^{\leq}(s)&=1+(1+p)t+(1+p+p^{2})t^{2}+t^{3}.
	\end{align*}
\end{thm}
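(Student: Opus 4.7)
The plan is to apply the three-step procedure summarised at the end of Section 2 directly, using the enumeration of diagonal types already displayed in Example \ref{exm:Heisenberg}. Since $n=3$ there are exactly $2^3=8$ types $\mathcal{M}_{\underline{a},\underline{b}}$ to analyse, and the structure constants of $H$ are so sparse ($[e_1,e_2]=e_3=-[e_2,e_1]$, all other basis brackets vanishing) that every nontrivial commutator produced by the rows $\bsm_i$ of a generator matrix is a scalar multiple of $e_3$. Consequently, for each diagonal type the systems $g^{*}_{ijk}=0$ from the proof of Theorem \ref{thm:alg.v.fp} collapse to the single linear question: does the candidate span contain $e_3$?

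For the ideal count, the types $k=0$ and $k=3$ each contribute $1$. Within $k=1$, the type $(0,1,1)$ gives the ideal $\langle e_2,e_3\rangle$ and contributes $1$; the type $(1,0,1)$ gives $\langle e_1+m_{1,2}e_2,\,e_3\rangle$, which is an ideal for every $m_{1,2}\in\Fp$ since $[e_1+m_{1,2}e_2,e_j]$ always lies in $\langle e_3\rangle$, contributing $p$; the type $(1,1,0)$ gives $\langle e_1+m_{1,3}e_3,\,e_2+m_{2,3}e_3\rangle$ and fails because $[\bsm_1,e_2]=e_3$ is never in this plane. Within $k=2$, only $(0,0,1)=\langle e_3\rangle$ is an ideal; the types $(0,1,0)$ and $(1,0,0)$ both produce an $e_3$-valued bracket which cannot lie in a one-dimensional span missing $e_3$. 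Summing yields $1+(1+p)t+t^2+t^3$.

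For subalgebras the bookkeeping is cleaner because for any type with only one nonzero row the bracket condition $[\bsm_i,\bsm_j]\in\langle\bsm_1,\dots,\bsm_n\rangle$ is vacuous; hence at $k=2$ the types $(0,1,0)$ and $(1,0,0)$ contribute $p$ and $p^2$ respectively, on top of the contribution $1$ from $(0,0,1)$. At $k=1$, the types $(0,1,1)$ and $(1,0,1)$ succeed by the same computations as in the ideal case (note $[\bsm_1,\bsm_3]=0$ in the latter because $\bsm_3=e_3$), while $(1,1,0)$ fails for exactly the same reason as before, since $[\bsm_1,\bsm_2]=e_3$. The contributions then assemble to $1+(1+p)t+(1+p+p^2)t^2+t^3$.

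There is essentially no obstacle here; once the diagonal-type decomposition of Lemma \ref{lem:matrix} and Corollary \ref{cor:matrix} is in hand, the argument is a short case analysis made trivial by the fact that all nontrivial commutators land in $\langle e_3\rangle$. The only point that deserves a moment of care is the type $(1,1,0)$ at $k=1$: one must observe that the $e_3$-coordinate of any vector in $\langle e_1+m_{1,3}e_3,\,e_2+m_{2,3}e_3\rangle$ is pinned to the $e_1$- and $e_2$-coordinates through $m_{1,3},m_{2,3}$, so $e_3$ itself is never in the span, independently of the free parameters.
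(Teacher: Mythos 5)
Your proof is correct and follows essentially the same approach as the paper: enumerate the $2^3 = 8$ diagonal types from Lemma \ref{lem:matrix} / Corollary \ref{cor:matrix} and decide for each whether the span is an ideal or subalgebra. Your streamlining — noting that all nontrivial brackets of $H$ land in $\langle e_3\rangle$, so each membership test collapses to ``is $e_3$ in the span?'' — is a cleaner way to organise the same case analysis that the paper carries out by writing the systems $g^{\triangleleft}_{ijk}(m_{r,s})=0$ in full.
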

\begin{proof}
	Recall Example \ref{exm:Heisenberg}.  There are 8 distinct diagonal cells corresponding to the subspaces of $H(\Fp)$ of index $p^{k}$ for $0\leq k \leq 3$. We can explicitly calculate $\zeta_{\LFp}^{\triangleleft}(s)$ using these diagonal cells.
	
	Start with $\left(\begin{matrix}1&0&0\\&1&0\\&&1\\\end{matrix}\right)$ and $\left(\begin{matrix}0&0&0\\&0&0\\&&0\\\end{matrix}\right)$. They trivially give 1 ideal of index 1 and $p^3$ respectively. 
	
	Consider $M_{\underline{a},\underline{b}}=\left(\begin{matrix}0&0&0\\&1&0\\&&1\\\end{matrix}\right)$. Then our equation 
	\[\bold{m}_{i}C_{j}M_{\underline{a},\underline{b}}^{\sharp}=\left(Y^{(1)}_{i,j},\ldots,Y^{(n)}_{i,j}\right)M^{\flat}_{\underline{a},\underline{b}}\]
	gives rise to the system of equations 
	\begin{align*}
		i=1,j=1:\,&(0,0,0)=(0,Y_{1,1}^{(2)},Y_{1,1}^{(3)}),\\
		i=1,j=2:\,&(0,0,0)=(0,Y_{1,2}^{(2)},Y_{1,2}^{(3)}),\\
		i=1,j=3:\,&(0,0,0)=(0,Y_{1,3}^{(2)},Y_{1,3}^{(3)}),\\
		i=2,j=1:\,&(0,0,-1)=(0,Y_{2,1}^{(2)},Y_{2,1}^{(3)}),\\
		i=2,j=2:\,&(0,0,0)=(0,Y_{2,2}^{(2)},Y_{2,2}^{(3)}),\\
		i=2,j=3:\,&(0,0,0)=(0,Y_{2,3}^{(2)},Y_{2,3}^{(3)}),\\
		i=3,j=1:\,&(0,0,0)=(0,Y_{3,1}^{(2)},Y_{3,1}^{(3)}),\\
		i=3,j=2:\,&(0,0,0)=(0,Y_{3,2}^{(2)},Y_{3,2}^{(3)}),\\
		i=3,j=3:\,&(0,0,0)=(0,Y_{3,3}^{(2)},Y_{3,3}^{(3)}).
	\end{align*}
	Since we can solve this for all $i,j\in[3]$, we get $c_{\underline{a},\underline{b}}^{\triangleleft}=1.$ Similarly, for  $M_{\underline{a},\underline{b}}=\left(\begin{matrix}1&m_{1,2}&0\\&0&0\\&&1\\\end{matrix}\right)$, the same method gives $c_{\underline{a},\underline{b}}^{\triangleleft}=p.$
	
	The interesting case is when $M_{\underline{a},\underline{b}}=\left(\begin{matrix}1&0&m_{1,3}\\&1&m_{2,3}\\&&0\\\end{matrix}\right)$. For $i=1, j=2$, we get
	
	\[i=1,j=2:(0,0,1)=(Y_{1,2}^{(1)},Y_{1,2}^{(2)},0),\]	
	and one needs to solve $1=0$, which is impossible. Hence we get $c_{\underline{a},\underline{b}}^{\triangleleft}=0,$ and all together we get $1+p$ ideals of index $p$. 
	
	Similarly, $\left(\begin{matrix}0&0&0\\&0&0\\&&1\\\end{matrix}\right)$ gives 1 ideal of index $p^2$, and
	$\left(\begin{matrix}0&0&0\\&1&m_{2,3}\\&&0\\\end{matrix}\right)$ and
	$\left(\begin{matrix}1&m_{1,2}&m_{1,3}\\&0&0\\&&0\\\end{matrix}\right)$ give
	no ideals. 
	
	Summing all up, we get $\zeta_{H(\Fp)}^{\triangleleft}(s)=1+(1+p)t+t^{2}+t^{3}$ as required. The same method gives $\zeta_{H(\Fp)}^{\leq}(s)=1+(1+p)t+(1+p+p^{2})t^{2}+t^{3}$. 
\end{proof}
\begin{rem}\label{rem:fq.gen}
	Note that in fact the same argument works for general finite fields $\Fq$, and our results can be extended from $\Fp$ to $\Fq$ simply by writing
	\[\zeta_{L(\Fq)}^{*}(s)=\sum_{\underline{a},\underline{b}}\mid\bold{Y}_{\underline{a},\underline{b}}^{*}(\mathbb{F}_{q})\mid\cdot q^{-(\sum_{i=1}^{r}a_{i})s}.\]
	For example, one can extend Theorem \ref{thm:H.theory} as 
\begin{align*}
	\zeta_{H(\Fq)}^{\triangleleft}(s)&=1+(1+q)t+t^{2}+t^{3},\\
	\zeta_{H(\Fq)}^{\leq}(s)&=1+(1+q)t+(1+q+q^{2})t^{2}+t^{3},
\end{align*}
and	Theorem \ref{thm:Elliptic} as	
	\begin{align*}\zeta_{L_{E}(\Fq)}^{\triangleleft}(s) =& \zeta_{\mathbb{F}_{q}^{6}}(s)+q^{2}\left|E(\mathbb{F}_{q})\right|t^{5}+\left|E(\mathbb{F}_{q})\right|\left(q+q^{2}\right)t^{6}\\
		&+\binom{3}{1}_{q}t^{7} +\binom{3}{2}_{q}t^{8}+t^{9},
	\end{align*}
	where now $t=q^{-s}$. 
\end{rem}
\begin{rem}\label{rem:Zp.and.Fp} 
This constructive method for computing $\zeta_{L(\Fp)}^{*}(s)$ is very similar to the argument used by du Sautoy and Grunewald to prove \cite[Theorem 1.3]{duSG/00}. In particular, we explicitly described varieties $U_{j}^{*}$ in Theorem \ref{thm:fp.general} as $\bold{Y}_{\underline{a},\underline{b}}^{*}$. Since one of the main motivations to study $\zeta_{L(\Fp)}^{*}(s)$ is to use it as an approximation of $\zeta_{L(\Zp)}^{*}(s)$, it is very natural to ask whether the set of algebraic varieties coming up from $\zeta_{L(\Fp)}^{*}(s)$ is the subset of that for $\zeta_{L(\Zp)}^{*}(s)$. In other words, one can ask whether the varieties we need to understand to compute $\zeta_{L(\Fp)}^{*}(s)$ are also among those necessary for $\zeta_{L(\Zp)}^{*}(s)$. Although we cannot rigorously prove this in this article yet, our computations suggest this in a very favorable way. More computations of $\zeta_{L(\Fp)}^{*}(s)$ for further investigations would be desired.
\end{rem}
\begin{rem}
	One potential way to tackle this problem might be appealing to motivic zeta functions introduced in \cite{duSLoeser/04}. For example, is the subring of the Grothendieck ring of varieties generated by the varieties coming up in $\zeta_{L(\Zp)}^{*}(s)$ the same as the one generated by the varieties coming up in $\zeta_{L(\Fp)}^{*}(s)$? Unfortunately the author cannot answer this problem in this article, but will further investigate in this direction.
\end{rem}

\subsection{Applications on semisimple cases: $\spl_{2}$ and $\gl_2$}\label{subsec:semi}
As we mentioned in the introduction, one can also consider the subalgebra or ideal zeta functions of non-nilpotent Lie algebras. For example, the generic local subalgebra zeta function of semisimple Lie algebra $\spl_2(\Z)$ was computed several times by various mathematicians using different methods (e.g., \cite{duS/00sl2},\cite{duSTaylor/02}, and \cite{KlopschVoll/09MZ}). However, this was the only example of a semisimple Lie algebras whose zeta functions have been computed, until Rossmann computed the local subalgebra zeta function of $\gl_2(\Z)$ in \cite{Rossmann/16}.

Here we compute $\zeta_{\spl_{2}(\Fp)}^{\leq}(s)$ and $\zeta_{\gl_{2}(\Fp)}^{\leq}(s)$ and show how the method we developed also works for semisimple Lie algebras in the same way.

\begin{thm}\label{thm:sl2.fp} Let \[\spl_{2}=\langle e,f,h\mid[h,e]=2e,[h,f]=-2f,[e,f]=h\rangle\] denote a semisimple Lie ring isomorphic to $\Z^{3}$. Then we have	\[\zeta_{\spl_{2}(\Fp)}^{\leq}(s)=1+(1+p)t+(1+p+p^{2})t^{2}+t^{3}.\]
\end{thm}

\begin{proof}
	Let 
	\[
	M=\left(\begin{array}{ccc}
		m_{1,1} & m_{1,2} & m_{1,3} \\
		0 & m_{2,2} & m_{2,3} \\
		0 & 0 & m_{3,3} 
	\end{array}\right)
	\]
	denote the matrix representations of subspaces of $\spl_{2}(\Fp)$. Since $[\bsm_{i},\bsm_{i}]=0$ for all $i\in[3]$, one can easily see that
	\begin{itemize}
		\item $\left(\begin{matrix}1&0&0\\&1&0\\&&1\\\end{matrix}\right)$ gives $1$,
		\item $\left(\begin{matrix}0&0&0\\&0&0\\&&1\\\end{matrix}\right)$,  $\left(\begin{matrix}0&0&0\\&1&m_{2,3}\\&&0\\\end{matrix}\right)$, and
		$\left(\begin{matrix}1&m_{1,2}&m_{1,3}\\&0&0\\&&0\\\end{matrix}\right)$ give $\binom{3}{1}_pt^2$, and
		\item $\left(\begin{matrix}0&0&0\\&0&0\\&&0\\\end{matrix}\right)$ gives $t^3$.
	\end{itemize}
	Let us now consider the remaining cases. For	
	\[M=\begin{pmatrix}
		1&0&m_{1,3}\\
		&1&m_{2,3}\\
		&&0		
	\end{pmatrix},\]
	we need to solve $1+4m_{13}m_{2,3}=0$, giving $(p-1)t$. For	
	\[M=\begin{pmatrix}
		1&m_{1,2}&0\\
		&0&0\\
		&&1		
	\end{pmatrix},\]
	we need to solve $m_{1,2}=0$, giving $t$. Finally, for 
	\[M=\begin{pmatrix}
		0&0&0\\
		&1&0\\
		&&1		
	\end{pmatrix},\]
	it is a subalgebra of $\spl_2(\Fp)$, giving another $t$. Therefore we have\[\zeta_{\spl_{2}(\Fp)}^{\leq}(s)=1+(1+p)t+(1+p+p^2)t^{2}+t^{3}\]
	as required.
\end{proof}
Now, for $\gl_{2}$, as Rossmann did in \cite[Theorem 9.1]{Rossmann/16} we use the fact that $\gl_2(\Fp)$ is isomorphic to $\spl_2(\Fp)\oplus \Fp$ for $p\neq2$, where we regard $\Fp$ as an abelian $\Fp$-Lie algebra.
\begin{thm}\label{thm:gl2.fp} Let \[\gl_{2}=\langle e,f,h,g\mid[h,e]=2e,[h,f]=-2f,[e,f]=h\rangle\] denote a semisimple Lie ring isomorphic to $\Z^{4}$. Then for $p>2$ we have
	\begin{align*}
		\zeta_{\gl_2(\Fp)}^{\leq}(s)&=\zeta_{\spl_2(\Fp)}^{\leq}(s)+t+(p+p^2)t^2+(p+p^2+p^3)t^3+t^4\\
		&=	1+(2+p)t+(1+2p+2p^{2})t^{2}+(1+p+p^2+p^3)t^{3}+t^4
	\end{align*}
\end{thm}

\begin{proof}
	Let 
	\[
	M=\begin{pmatrix}
		m_{1,1}&m_{1,2}&m_{1,3}&m_{1,4}\\
		&m_{2,2}&m_{2,3}&m_{2,4}\\
		&&m_{3,3}&m_{3,4}\\
		&&&m_{4,4}		
	\end{pmatrix}
	\]
	denote the matrix representations of subspaces of $\gl_{2}(\Fp)$. 
	First, suppose $m_{4,4}=1$.	By the way we constructed $\gl_2(\Fp)$ as $\spl_2(\Fp)\oplus \Fp$  with the basis $\langle e,f,h,g \rangle$, from this case we get $\zeta_{\gl_{2}(\Fp)}^{\leq}(s)$ as computed in Theorem \ref{thm:sl2.fp}.
	
	Hence suppose $m_{4,4}=0$. Again one first see that 
	\begin{itemize}
		\item
		$\begin{pmatrix}
			0&0&0&0\\
			&0&0&0\\
			&&0&0\\
			&&&0		
		\end{pmatrix}$	
		gives $t^4$, and
		\item
		$\begin{pmatrix}
			1&m_{1,2}&m_{1,3}&m_{1,4}\\
			&0&0&0\\
			&&0&0\\
			&&&0		
		\end{pmatrix}$,
		$\begin{pmatrix}
			0&0&0&0\\
			&1&m_{2,3}&m_{2,4}\\
			&&0&0\\
			&&&0		
		\end{pmatrix}$, and
		$\begin{pmatrix}
			0&0&0&0\\
			&0&0&0\\
			&&1&m_{3,4}\\
			&&&0		
		\end{pmatrix}$ give $(p+p^2+p^3)t^3$.
	\end{itemize}
	Hence we check the remaining 4 cases. For
	\[M=\begin{pmatrix}
		1&0&m_{1,3}&m_{1,4}\\
		&1&m_{2,3}&m_{3,4}\\
		&&0&0\\
		&&&0		
	\end{pmatrix},\]
	we need to solve $1+4m_{1,3}m_{1,4}=m_{1,4}m_{2,3}+m_{1,3}m_{2,4}=0$. Since the first equation implies $m_{2,3}\neq0$, we have $p-1$ choices for $m_{2,3}$, $p$ choices for $m_{1,4}\in\Fp$, giving $m_{1,3}=-\frac{1}{4m_{2,3}}$ and $m_{2,4}=4m_{1,3}m_{2,3}^2$. Hence we get $p(p-1)t^2$. For
	\[M=\begin{pmatrix}
		1&m_{1,2}&0&m_{1,4}\\
		&0&0&0\\
		&&1&m_{3,4}\\
		&&&0		
	\end{pmatrix},\]
	we have $m_{1,2}=m_{1,4}=0$, giving $pt^2$. For
	\[M=\begin{pmatrix}
		0&0&0&0\\
		&1&0&m_{2,4}\\
		&&1&m_{3,4}\\
		&&&0		
	\end{pmatrix},\]
	we have $m_{2,4}=0$, giving another $pt^2$. Finally, for	
	\[M=\begin{pmatrix}
		1&0&0&m_{1,4}\\
		&1&0&m_{2,4}\\
		&&1&m_{3,4}\\
		&&&0		
	\end{pmatrix},\]  
	we have $m_{1,4}=m_{2,4}=m_{3,4}=0$, giving $t$. Therefore we have	
	\begin{align*}
		\zeta_{\gl_2(\Fp)}^{\leq}(s)&=\zeta_{\spl_2(\Fp)}^{\leq}(s)+t+(p+p^2)t^2+(p+p^2+p^3)t^3+t^4\\
		&=	1+(2+p)t+(1+2p+2p^{2})t^{2}+(1+p+p^2+p^3)t^{3}+t^4
	\end{align*}
	as required.
\end{proof}

\begin{rem}
	Note that $\spl_2$ and $\gl_2$ are currently the only semisimple Lie algebras whose local subalgebra zeta functions are known. One might try to compute $\zeta_{\spl_3(\Fp)}^{\leq}(s)$ or $\zeta_{\gl_3(\Fp)}^{\leq}(s)$  first before computing $\zeta_{\spl_3(\Zp)}^{\leq}(s)$ or $\zeta_{\gl_3(\Zp)}^{\leq}(s)$.	
\end{rem}

\section{Computations in practice for various $\Fp$-Lie algebras}\label{sec:3}

The method we developed in Section \ref{sec:2} can be summarized as follows:
\begin{enumerate}
	\item For a given $L(\Fp)$, write down $2^{n}$ distinct diagonal cells $\mathcal{M}_{\underline{a},\underline{b}}$.
	\item For each  $\mathcal{M}_{\underline{a},\underline{b}}$, construct $\bold{Y}_{\underline{a},\underline{b}}^{*}$ and solve $c_{\underline{a},\underline{b}}^{*}$ .
	\item Add together to get  $\zeta_{\LFp}^{*}(s)=\sum_{\underline{a},\underline{b}}c_{\underline{a},\underline{b}}^{*}\cdot p^{-(\sum_{i=1}^{r}a_{i})s}.$
\end{enumerate}
 This worked well with the Heisenberg case in Theorem \ref{thm:H.theory} as well as with the semisimple cases $\spl_2$ and $\gl_2$ in Theorem \ref{thm:sl2.fp} and \ref{thm:gl2.fp}. But it also has clear limitations. First, the number of diagonal cells we need to check grows exponentially.  Also, in general Step (2), solving certain systems of equations over $\Fp$, is very difficult. 

Fortunately, by putting some restrictions on $L$ we can apply additional techniques to make this computation easier in practice. In Section \ref{subsec:nil.fp}, we demonstrate how one can modify our general method for practical computations when $L$ is nilpotent. In Section \ref{subsec:c2}, by appealing to a well-known idea developed by Voll (\cite{Voll/04,Voll/05,Voll/11}), we show an alternative method for computing $\zeta_{L(\Fp)}^{\triangleleft}(s)$ when $L$ is class-2 nilpotent. 

\subsection{Nilpotent $\Fp$-Lie algebras}\label{subsec:nil.fp}
For nilpotent $L$, we can adopt following techniques:
\begin{enumerate}
	\item \textbf{Choice of the basis and the lower central series of $L$}: In Theorem \ref{thm:H.theory}, we chose our basis $\mathcal{B}=(e_{1},e_{2},e_{3})$ such that $\langle e_{3}\rangle=[H,H]=:H'$ and $\langle e_{1},e_{2}\rangle=H/H'$. In general, let $c$ be the nilpotency class of $\LFp$, $(\gamma_{i}(L))_{i=1}^{c}$ be the lower central series of $\LFp$, and  for $i=1,\dots,c$ set
	$\LFp^{(i)}:=\gamma_{i}(\LFp)/\gamma_{i+1}(\LFp)$.
	Let $d_{i}=\dim(\LFp^{(i)})$. To make computations ``simpler'', one can choose the basis such that the first $d_{1}$ elements generate $\gamma_{1}(\LFp)/\gamma_{2}(\LFp)$, the next $d_{2}$ generate $\gamma_{2}(\LFp)/\gamma_{3}(\LFp)$, and so on.
	\item \textbf{Diagonal conditions}: Among these conditions there are some simple ones of the form $m_{i,i}|m_{j,j}$, one diagonal entry dividing the other diagonal entry for ideals, and of the form $m_{i,i}|m_{j,j}m_{k,k}$ for subalgebras. We call them \textit{diagonal conditions}. One can think of these conditions as the backbone of the structure of subalgebras or ideals of $L$.  
\end{enumerate}
In particular, as we see shortly (e.g. Theorem \ref{thm:fp.mc.ideal} and Theorem \ref{thm:fp.mc}), this method even allows us to compute the zeta functions of some families of infinitely many $\Fp$-Lie algebras. 

\subsubsection{The Heisenberg Lie ring $H$, revisited}
Let 
\[
M=\left(\begin{array}{ccc}
	m_{1,1} & m_{1,2} & m_{1,3} \\
	0 & m_{2,2} & m_{2,3} \\
	0 & 0 & m_{3,3} 
\end{array}\right)
\]
denote the matrix representations of subspaces of $H$ as before. One can easily check that $M\triangleleft H$ if and only if $m_{3,3}|m_{1,1},m_{2,2}$. Hence $m_{3,3}=0$ forces $m_{1,1}=m_{2,2}=0$, and we only need to check 5 cases, namely
\[\left(\begin{matrix}1&0&0\\&1&0\\&&1\\\end{matrix}\right),\,\left(\begin{matrix}0&0&0\\&1&0\\&&1\\\end{matrix}\right),\,\left(\begin{matrix}1&m_{1,2}&0\\&0&0\\&&1\\\end{matrix}\right),\,\left(\begin{matrix}0&0&0\\&0&0\\&&1\\\end{matrix}\right),\,\textrm{and}\,\left(\begin{matrix}0&0&0\\&0&0\\&&0\\\end{matrix}\right).\]

This also explains why the systems of equations of the other three cases do not have solutions. For example, $\left(\begin{matrix}1&0&m_{1,3}\\&1&m_{2,3}\\&&0\\\end{matrix}\right)$ asked us to solve
\[i=1,j=2:\,(0,0,1)=(Y_{1,2}^{(1)},Y_{1,2}^{(2)},0),\]
which required $1=0$. 

\subsubsection{The maximal class-$c$ Lie algebra $M_{c}$, counting ideals}
For $c\geq2$, let \[M_{c}=\langle x_{0},x_{1},\ldots,x_{c}\mid\forall 1\leq i\leq 
c-1,[x_{0},x_{i}]=x_{i+1}\rangle\] be a maximal class-$c$ Lie ring $M_{c}$. 
\begin{thm} \label{thm:fp.mc.ideal}
	For $c\geq2$,
	\begin{align*}
		\zeta_{M_{c}(\Fp)}^{\triangleleft}(s)&=1+(1+p)t+\sum_{k=2}^{c+1}t^{k}\\
		&=\zeta_{\Fp^{2}}(s)+\sum_{k=3}^{c+1}t^{k}.
	\end{align*}
\end{thm}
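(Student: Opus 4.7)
The plan is to classify all ideals of $M_c(\Fp)$ directly from the Lie structure, rather than enumerating the $2^{c+1}$ diagonal types and solving the associated systems of equations as in Theorem~\ref{thm:alg.v.fp}. The starting observation is that any ideal $I\triangleleft M_c(\Fp)$ is determined by the pair $(K,\bar I)$, where $K:=I\cap \gamma_2(M_c)$ and $\bar I$ denotes the image of $I$ in the abelianization $M_c(\Fp)/\gamma_2(M_c)\cong \Fp^2$, together with a small amount of gluing data.

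First I would pin down the possibilities for $K$. Since $K$ is $\ad(x_0)$-invariant, and $\ad(x_0)$ restricted to $\gamma_2(M_c)=\langle x_2,\ldots,x_c\rangle$ acts as the single nilpotent Jordan block $x_i\mapsto x_{i+1}$ (with the convention $x_{c+1}:=0$), its only invariant subspaces are the terms of the lower central series. Hence $K\in\{\gamma_2(M_c),\gamma_3(M_c),\ldots,\gamma_c(M_c),0\}$, a total of $c$ possibilities.

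Next I would split into three subcases according to $\dim \bar I$. If $\dim \bar I=2$, then $I=M_c$, contributing one ideal of index $1$. If $\dim \bar I=0$, then $I=K=\gamma_j(M_c)$ for some $j\in\{2,\ldots,c+1\}$, producing one ideal of each index $p^2,\ldots,p^{c+1}$. The delicate case is $\dim \bar I=1$: writing $\bar I=\langle\alpha\bar x_0+\beta\bar x_1\rangle$ with $(\alpha,\beta)\neq(0,0)$ and lifting to $v=\alpha x_0+\beta x_1+w\in I$ with $w\in\gamma_2(M_c)$, the ideal condition forces the commutators $[x_1,v]=-\alpha x_2$ and $[x_0,v]\equiv\beta x_2\pmod{\gamma_3(M_c)}$ to lie in $K$. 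Since $x_2\in\gamma_2(M_c)\setminus\gamma_3(M_c)$, this forces $K=\gamma_2(M_c)$. Once $K=\gamma_2(M_c)$, the set $I=\gamma_2(M_c)+\Fp v$ is automatically an ideal and depends only on the line $[\alpha:\beta]\in\mathbb{P}^1(\Fp)$, so we obtain exactly $1+p$ ideals of index $p$.

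Summing the three subcases yields $\zeta_{M_c(\Fp)}^{\triangleleft}(s)=1+(1+p)t+\sum_{k=2}^{c+1}t^k$. I expect the only real obstacle to be the $\dim \bar I=1$ case, where one must rule out ``mixed'' ideals with nontrivial abelian image and proper intersection with $\gamma_2(M_c)$. This rests on the single-Jordan-block rigidity of $\ad(x_0)|_{\gamma_2(M_c)}$: the above two commutator computations handle it for $M_c$, but they would fail for a Lie ring with a less constrained adjoint action on its derived subring. The rest of the argument is bookkeeping of the ideals forced by the lower central series.
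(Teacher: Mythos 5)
Your proof is correct, and it takes a genuinely different route from the paper. The paper proves this theorem by running its general matrix machinery from Section~2: it writes a generic upper-triangular generator matrix for a subspace of $M_c(\Fp)$, extracts the commutators $[\boldsymbol{m}_i, e_j]$, and deduces the chain of diagonal conditions $m_{i,i}\mid m_{j,j}$ for $3\leq i\leq c+1$ and $j<i$, which forces the diagonal restricted to the coordinates covering $\gamma_2$ to be of the form $(0,\ldots,0,1,\ldots,1)$; the count then falls out by splitting on $m_{3,3}$. Your argument is intrinsic rather than coordinate-based: you decompose an ideal $I$ into the pair $(K,\bar I)$ with $K=I\cap\gamma_2(M_c)$ and $\bar I$ the image in $M_c/\gamma_2\cong\Fp^2$, observe that $K$ is $\ad(x_0)$-invariant, and use the single-Jordan-block structure of $\ad(x_0)|_{\gamma_2}$ to pin $K$ down to a term of the lower central series. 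This is essentially the classical structural argument that a maximal-class Lie algebra (or $p$-group) has a unique ideal of each codimension $\geq 2$ together with $p+1$ hyperplane ideals, which the paper cites as a ``well-known fact'' from finite $p$-group theory in the remark following the theorem but proves by matrix computation. Your version is more conceptual and makes clear \emph{why} the count is so rigid; the paper's version has the advantage of being a direct instance of its own general algorithm and requiring no structure theory. Two small points worth tightening if you write this up: the $\dim\bar I=2$ case deserves a sentence (take lifts $a\equiv x_0$, $b\equiv x_1$ modulo $\gamma_2$; then $[a,b]\equiv x_2\bmod\gamma_3$ lies in $K$, forcing $K=\gamma_2$ and hence $I=M_c$), and in the $\dim\bar I=1$ case you should note explicitly that when $\alpha=0$ you still get $\beta\neq0$ so the second commutator alone forces $K=\gamma_2$ — your two displayed commutators together cover both $\alpha\neq0$ and $\alpha=0$, but the logic is compressed.
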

\begin{proof}
	Note that when $c=2$, $M_{2}=H$ and we have \[\zeta_{M_{2}(\Fp)}^{\triangleleft}(s)=\zeta_{H(\Fp)}^{\triangleleft}(s)=1+(1+p)t+t^{2}+t^{3}\]
	as required. So suppose $c\geq3$.
	Let 
	\[
	M=\left(\begin{array}{cccc}
		m_{1,1} & m_{1,2} & \cdots & m_{1,c+1}\\
		0 & m_{2,2} & \cdots & m_{2,c+1}\\
		0 & 0 & \ddots & \vdots\\
		0 & 0 & \cdots & m_{c+1,c+1}
	\end{array}\right)
	\]
	denote a generator matrix for a subspace of $M_{c}(\Fp),$ and for $1\leq i \leq c+1$, let 
	$\boldsymbol{m}_{i}$ denote the $i$-th row of $M$. Since $[e_{1},e_{i}]=e_{i+1}$ for all $2\leq i \leq c$ and $[e_{i},e_{j}]=0$ for all $2\leq i,j \leq c+1$, we only need to consider
	\begin{align*}
		[\boldsymbol{m}_{1},e_{1}]&=-(m_{1,2}e_{3}+m_{1,3}e_{4}+\cdots+m_{1,c}e_{c+1}),\\
		[\boldsymbol{m}_{i},e_{1}]&=-(m_{i,i}e_{i+1}+m_{i,i+1}e_{i+2}+\cdots+m_{i,c}e_{c+1}),\\
		[\boldsymbol{m}_{1},e_{j}]&=m_{1,1}e_{j+1},	
	\end{align*}
	where all other commutators are trivial. This implies in particular that for $3\leq i\leq c+1,$ we have diagonal conditions $m_{i,i}\mid m_{j,j}$ for all
	$1\leq j<i.$ Now, let $m_{3,3}=1.$ Then it forces $m_{4,4}=\cdots=m_{c+1,c+1}=1$. Hence there are only
	4 such possible cases with $m_{3,3}=1$:
	
	\[
	\begin{aligned}M_{111\ldots1}=\left(\begin{array}{ccccc}
			1 & 0 & 0 & 0 & 0\\
			0 & 1 & 0 & 0 & 0\\
			0 & 0 & 1 & \cdots & 0\\
			0 & 0 & \vdots & \ddots & \vdots\\
			0 & 0 & 0 & \cdots & 1
		\end{array}\right),\, &	M_{011\ldots1}=\left(\begin{array}{ccccc}
			0 & 0 & 0 & 0 & 0\\
			0 & 1 & 0 & 0 & 0\\
			0 & 0 & 1 & \cdots & 0\\
			0 & 0 & \vdots & \ddots & \vdots\\
			0 & 0 & 0 & \cdots & 1
		\end{array}\right),\\
		M_{101\ldots1}=\left(\begin{array}{ccccc}
			1 & m_{1,2} & 0 & 0 & 0\\
			0 & 0 & 0 & 0 & 0\\
			0 & 0 & 1 & \cdots & 0\\
			0 & 0 & \vdots & \ddots & \vdots\\
			0 & 0 & 0 & \cdots & 1
		\end{array}\right),\,& 
		M_{001\ldots1}=\left(\begin{array}{ccccc}
			0 & 0 & 0 & 0 & 0\\
			0 & 0 & 0 & 0 & 0\\
			0 & 0 & 1 & \cdots & 0\\
			0 & 0 & \vdots & \ddots & \vdots\\
			0 & 0 & 0 & \cdots & 1
		\end{array}\right).
	\end{aligned}
	\]
	This gives us one ideal of index 1, a total of $p+1$ ideals of index $p$ and
	one ideal of index $p^{2}.$ For the rest, $M$ would look like
	\[
	M_{\underbrace{0\cdots 0}_{a_{1}}\underbrace{1\cdots1}_{b_{1}}}=\left(\begin{array}{cc}
		0_{a_{1}} & 0\\
		0 & I_{b_{1}}
	\end{array}\right)
	\]
	where $3\leq a_{1}\leq c+1$.
	Each of them gives 1 ideal of index $p^{a_{1}}.$ Thus we get 
	\begin{align*}
		\zeta_{M_{c}(\Fp)}^{\triangleleft}(s)&=1+(1+p)t+t^2+\sum_{k=3}^{c+1}t^{k}\\
		&=\zeta_{\Fp^{2}}(s)+\sum_{k=3}^{c+1}t^{k}.
	\end{align*}	
\end{proof}
Note that one only needs to check $c+3$ cases instead of $2^{c+1}$ cases. For $p>c$, it also gives an alternate proof of a well-known fact (\cite[p.9]{Blackburn58} or \cite[Section 9]{Berk08}) from finite $p$-group theory that any maximal class-$c$ group of order $p^{c+1}$ contains only one normal subgroup of order $p^{m}$ for $1\leq m\leq c-1$ and $1+p$ maximal (hence normal) subgroups or order $p^{c}$. 

\begin{rem} The local ideal zeta function $\zeta_{M_{c}(\Zp)}^{\triangleleft}(s)$	is extremely difficult to calculate. At present we only know the explicit formulas for $c\leq4$, first computed in \cite{Taylor/01}.
\end{rem}

We also computed the following result using similar computations:
\begin{cor}\label{cor:fil4}
	Let 
	\[
	\text{Fil}_{4}=\langle x_{1},\,x_{2},\,x_{3},\,x_{4},\,x_{5}\mid[x_{1},x_{2}]=x_{3},\,[x_{1},x_{3}]=x_{4},\,[x_{1},x_{4}]=[x_{2},x_{3}]=x_{5}\rangle.
	\]
	
	Then
	\[\zeta_{\text{Fil}_{4}(\Fp)}^{\triangleleft}(s)=\zeta_{M_{4}(\Fp)}^{\triangleleft}(s)=1+(1+p)t+t^{2}+t^{3}+t^{4}+t^{5}.\]
\end{cor}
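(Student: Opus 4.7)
The plan is to apply the enumeration method of Section~2 to $\text{Fil}_4$, following the template of the proof of Theorem~\ref{thm:fp.mc.ideal}. Choose the ordered basis $\mathcal{B}=(e_1,\ldots,e_5)$ with $e_i=x_i$, which is compatible with the lower central series. The only non-zero brackets among basis vectors are
\[
[e_1,e_2]=e_3,\qquad [e_1,e_3]=e_4,\qquad [e_1,e_4]=e_5,\qquad [e_2,e_3]=e_5,
\]
where the vanishing of $[e_2,e_4]$ follows from a one-line Jacobi computation and the remaining cases are forced by nilpotency class $4$.

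Writing $M=(m_{i,j})$ for the generator matrix of a subspace as in Lemma~\ref{lem:matrix}, I would next tabulate the brackets $[\boldsymbol{m}_i,e_j]$ (most of which vanish) and compare their leading $e_k$ coefficients with the span condition coming from Lemma~\ref{lem:matrix}. From $[\boldsymbol{m}_4,e_1]$, $[\boldsymbol{m}_3,e_1]$, $[\boldsymbol{m}_2,e_1]$ and $[\boldsymbol{m}_1,e_2]$ this yields the diagonal conditions
\[
m_{5,5}\mid m_{4,4},\qquad m_{4,4}\mid m_{3,3},\qquad m_{3,3}\mid m_{1,1},\quad m_{3,3}\mid m_{2,2},
\]
while $[\boldsymbol{m}_1,e_4]$, $[\boldsymbol{m}_2,e_3]$ and $[\boldsymbol{m}_3,e_2]$ additionally give $m_{5,5}\mid m_{1,1},\,m_{2,2},\,m_{3,3}$, which are implied by the preceding chain. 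Consequently the admissible diagonal patterns $(m_{1,1},\ldots,m_{5,5})\in\{0,1\}^5$ are exactly the seven that appear in the proof of Theorem~\ref{thm:fp.mc.ideal} for $c=4$: one pattern of each index $p^k$ with $k\in\{0,2,3,4,5\}$, together with two patterns $(0,1,1,1,1)$ and $(1,0,1,1,1)$ of index $p$.

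The third step is to count off-diagonal completions giving an actual ideal for each admissible pattern. Six of the seven patterns admit no free off-diagonal entry, because Lemma~\ref{lem:matrix} forces $m_{i,j}=0$ whenever $m_{j,j}=1$ while Corollary~\ref{cor:matrix} forces $m_{i,j}=0$ whenever $m_{i,i}=0$; each therefore contributes a single ideal, accounting for the terms $1,\,t,\,t^2,\,t^3,\,t^4,\,t^5$. The remaining pattern $(1,0,1,1,1)$ leaves $m_{1,2}\in\Fp$ as the single free parameter, and a direct check of $[e_1+m_{1,2}e_2,e_j]$ against the commutator table confirms that every value of $m_{1,2}$ produces a valid ideal, contributing $p$ ideals of index $p$. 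Summing everything yields $1+(1+p)t+t^2+t^3+t^4+t^5$, which coincides with $\zeta_{M_4(\Fp)}^{\triangleleft}(s)$ from Theorem~\ref{thm:fp.mc.ideal}.

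The main obstacle I expect is the verification that the extra relation $[e_2,e_3]=e_5$ of $\text{Fil}_4$ (absent in $M_4$) contributes no genuinely new constraint: it must neither rule out any $M_4$-admissible diagonal pattern nor cut down the $p$ choices of $m_{1,2}$ in the pattern $(1,0,1,1,1)$. Since $[e_2,e_3]$ lands in $\gamma_4=\langle e_5\rangle$ and $m_{5,5}$ already sits at the bottom of the divisibility chain, these extra conditions are automatically subsumed, which is the structural reason the two zeta functions agree.
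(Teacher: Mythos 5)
Your reconstruction follows the method of Theorem~\ref{thm:fp.mc.ideal} exactly as the paper intends (the paper gives no separate proof, only the remark that Corollary~\ref{cor:fil4} follows ``using similar computations''), and your diagonal conditions, the resulting seven admissible patterns, and the final count all agree with what the method produces. One trivial slip: the pairing is $[\boldsymbol{m}_1,e_2]\Rightarrow m_{3,3}\mid m_{1,1}$ and $[\boldsymbol{m}_2,e_1]\Rightarrow m_{3,3}\mid m_{2,2}$ (you swapped them), and for the six rigid patterns one should also observe that the unique completion really is an ideal (they are $\gamma_1,\ldots,\gamma_4,\{0\}$ and a hyperplane containing $\gamma_2$, so this is immediate), but neither affects the argument.
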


\begin{rem}
	Over $\Zp$, we have $\zeta_{\text{Fil}_{4}(\Zp)}^{\triangleleft}(s)\neq\zeta_{M_{4}(\Zp)}^{\triangleleft}(s)$.
	This demonstrates that the approximation over $\Fp$ is not enough to spot the subtle difference between $\textrm{Fil}_4$ and $M_4$. 
\end{rem}

\subsubsection{The maximal class-$c$ Lie algebra $M_{c}$, counting subalgebras}

\begin{thm} \label{thm:fp.mc}
	\begin{align*}
		\zeta_{M_{2}(\Fp)}^{\leq}(s)=&1+(1+p)t+(1+p+p^{2})t^{2}+t^3,
	\end{align*}
	and in general for $c\geq3$,
	\begin{align*}
		\zeta_{M_{c}(\Fp)}^{\leq}(s)=&\zeta_{M_{c-1}(\Fp)}^{\leq}(s)+\sum_{a=0}^{c-1}\binom{c-1}{a}_{p}p^{c-1-a}t^{a+2}+p^{c}t^{c}.
	\end{align*}
\end{thm}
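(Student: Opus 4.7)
Following the approach of Theorem \ref{thm:fp.mc.ideal}, I parametrize each subspace $V \leq M_c(\Fp)$ by a matrix $M = (m_{i,j})$ in the reduced row echelon form of Lemma \ref{lem:matrix}, using the basis $e_1 = x_0,\, e_2 = x_1, \ldots, e_{c+1} = x_c$. Since the only nonzero basis brackets are $[e_1, e_j] = e_{j+1}$ for $2 \leq j \leq c$, the rows $\boldsymbol{m}_2, \ldots, \boldsymbol{m}_{c+1}$ pairwise commute, and every nontrivial commutator has the form
\[
[\boldsymbol{m}_1, \boldsymbol{m}_j] \;=\; m_{1,1}\sum_{l=j}^{c} m_{j,l}\, e_{l+1} \qquad (2 \leq j \leq c+1).
\]

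The plan is then to split on $m_{1,1} \in \{0,1\}$. If $m_{1,1} = 0$, all brackets vanish and $V \subseteq \langle e_2, \ldots, e_{c+1}\rangle$, which is abelian; every subspace there is automatically a subalgebra, contributing $\sum_{k=0}^{c} \binom{c}{k}_p\, t^{c+1-k}$. If $m_{1,1} = 1$, the leading term of $[\boldsymbol{m}_1, \boldsymbol{m}_j]$ is $m_{j,j}\, e_{j+1}$. A leading-term argument in reduced row echelon form (no vector of $V$ can have $e_{j+1}$ as its leading term once $m_{j+1,j+1}=0$) shows that $m_{j,j} = 1$ forces $m_{j+1, j+1} = 1$ for $2 \leq j \leq c$. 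Hence the only admissible diagonal patterns take the form $(m_{1,1}, \ldots, m_{c+1, c+1}) = (1, 0, \ldots, 0, 1, \ldots, 1)$, where the first $1$ after position one occurs at some position $j \in \{2, \ldots, c+2\}$ (with $j = c+2$ meaning all zeros). For each such pattern, the free entries are $m_{1, l}$ for $l \in \{2, \ldots, j-1\}$, yielding $p^{j-2}$ matrices; the remaining commutators all lie in $\langle e_j, \ldots, e_{c+1}\rangle \subseteq V$, so each such matrix genuinely defines a subalgebra. Summing over $j$ produces $\sum_{a=0}^{c} (pt)^a$.

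Combining both cases,
\[
\zeta_{M_c(\Fp)}^{\leq}(s) \;=\; \sum_{a=0}^{c} (pt)^a + \sum_{k=0}^{c} \binom{c}{k}_p\, t^{c+1-k},
\]
and direct expansion at $c = 2$ and $c = 3$ recovers the stated explicit formulas. For $c \geq 4$ I establish the recursion by computing $\zeta_{M_c(\Fp)}^{\leq}(s) - \zeta_{M_{c-1}(\Fp)}^{\leq}(s)$: the leftover from the first sum is $p^c t^c$, and applying the Pascal-type identity $\binom{c}{k}_p = \binom{c-1}{k-1}_p + p^k \binom{c-1}{k}_p$, followed by the reindexing $a = c-1-k$ (which swaps $\binom{c-1}{c-1-a}_p$ for $\binom{c-1}{a}_p$ by symmetry), turns the remainder into $\sum_{a=0}^{c-1} \binom{c-1}{a}_p\, p^{c-1-a}\, t^{a+2}$, as required. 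The main obstacle is the diagonal constraint argument in Case 2; once it is in place, everything else is organized bookkeeping, and the Gaussian binomial identity takes care of the recursion almost mechanically.
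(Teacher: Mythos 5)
Your proof is correct, and it takes a genuinely different route from the paper's. You split on the first diagonal entry $m_{1,1}$: in the branch $m_{1,1}=0$ the row space is an arbitrary subspace of the abelian ideal $\langle e_2,\ldots,e_{c+1}\rangle$, contributing $\sum_{k=0}^{c}\binom{c}{k}_p\,t^{c+1-k}$, and in the branch $m_{1,1}=1$ a leading-term argument in reduced row echelon form shows that $m_{j,j}=1$ forces $m_{j+1,j+1}=1$, so the only admissible diagonal patterns are $(1,0^{(a)},1^{(c-a)})$, each giving $p^a$ matrices (the free entries $m_{1,2},\ldots,m_{1,a+1}$) of index $p^a$, for a total of $\sum_{a=0}^{c}(pt)^a$. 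This yields the closed formula
\[
\zeta_{M_c(\Fp)}^{\leq}(s)=\sum_{a=0}^{c}(pt)^a+\sum_{k=0}^{c}\binom{c}{k}_p\,t^{c+1-k},
\]
and the stated recursion then falls out of the $q$-Pascal identity $\binom{c}{k}_p=\binom{c-1}{k-1}_p+p^k\binom{c-1}{k}_p$ (the $\binom{c-1}{k-1}_p$ part cancels $\zeta_{M_{c-1}(\Fp)}^{\leq}(s)$ exactly, and the $p^k\binom{c-1}{k}_p$ part reindexes to $\sum_{a=0}^{c-1}\binom{c-1}{a}_p\,p^{c-1-a}t^{a+2}$). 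The paper instead partitions on the \emph{last} diagonal entry $m_{c+1,c+1}$: the case $m_{c+1,c+1}=1$ is in bijection with subalgebras of $M_{c-1}(\Fp)$, which produces the $\zeta_{M_{c-1}(\Fp)}^{\leq}(s)$ term directly, while the case $m_{c+1,c+1}=0$ is handled via the diagonal divisibility conditions $m_{i+1,i+1}\mid m_{1,1}m_{i,i}$, splitting further on $m_{1,1}$ to get the correction terms. The paper's decomposition is tailored to produce the recursion structurally; yours buys the arguably more informative closed form as a byproduct, and I checked that it expands to the stated polynomials at $c=2,3$ and satisfies the recursion for $c\geq4$.
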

\begin{proof}
	For $c=2$, again we get 
	\[\zeta_{M_{2}(\Fp)}^{\leq}(s)=\zeta_{H(\Fp)}^{\leq}(s)=1+(1+p)t+(1+p+p^{2})t^{2}+t^3.\]
	
	For $c\geq3$, let
	\[
	M=\left(\begin{array}{cccc}
		m_{1,1} & m_{1,2} & \cdots & m_{1,c+1}\\
		0 & m_{2,2} & \cdots & m_{2,c+1}\\
		0 & 0 & \ddots & \vdots\\
		0 & 0 & \cdots & m_{c+1,c+1}
	\end{array}\right)
	\]
	denote a generator matrix for a subspace of $M_{c}(\Fp)$ as before. Note that for subalgebra, since $[e_{i},e_{j}]=0$ for all $2\leq i,j\leq c+1$, we only need to consider
	\[[\boldsymbol{m}_{1},\boldsymbol{m}_{i}]=m_{1,1}(m_{i,i}e_{i+1}+m_{i,i+1}e_{i+2}+\cdots+m_{i,c}e_{c+1}),\]
	for $2\leq i \leq c$. Hence the diagonal conditions are $m_{i+1,i+1}|m_{1,1}m_{i,i}$ for all $2\leq i \leq c$. 
	
	Let $c=3$.  The diagonal conditions for $M\leq M_{3}$ are $m_{4,4}|m_{1,1}m_{3,3}$ and $m_{3,3}|m_{1,1}m_{2,2}$. 
	If $m_{4,4}=1$, then 
	\[
	M=\left(\begin{array}{cccc}
		m_{1,1} & m_{1,2} & m_{1,3}&0 \\
		0 & m_{2,2} & m_{2,3}&0 \\
		0 & 0 & m_{3,3} &0\\
		0&0&0&1
	\end{array}\right)
	\]
	is a subalgebra of  $M_{3}(\Fp)$ if and only if 
	\[
	M'=\left(\begin{array}{ccc}
		m_{1,1} & m_{1,2} & m_{1,3} \\
		0 & m_{2,2} & m_{2,3} \\
		0 & 0 & m_{3,3} 
	\end{array}\right)
	\]
	is the subalgebra of $M_{2}(\Fp)$. This gives $\zeta_{M_{2}(\Fp)}^{\leq}(s)$.
	
	If $m_{4,4}=0$, since we have $m_{4,4}|m_{1,1}m_{3,3}$ at least one of $m_{1,1}$ or $m_{3,3}$ also needs to be zero. Hence we do not get any subalgebras of index $p$.
	
	For subalgebras of index $p^{2}$, note that by our diagonal conditions the only possible case is 
	\[
	M=\left(\begin{array}{cccc}
		0 & 0 & 0&0 \\
		0 & 1 & 0&m_{2,4} \\
		0 & 0 & 1 &m_{3,4}\\
		0&0&0&0
	\end{array}\right).
	\]
	This gives $p^{2}$ subalgebras of index $p^{2}$.
	
	For subalgebras of index $p^{3}$, we have 
	\[\left(\begin{array}{cccc}
		0 & 0 & 0&0 \\
		0 & 0 & 0&0 \\
		0 & 0 & 1 &m_{3,4}\\
		0&0&0&0
	\end{array}\right), \left(\begin{array}{cccc}
		0 & 0 & 0&0 \\
		0 & 1 & m_{2,3}&m_{2,4} \\
		0 & 0 & 0 &0\\
		0&0&0&0
	\end{array}\right),\,\textrm{and}\, \left(\begin{array}{cccc}
		1 & m_{1,2} & m_{1,3} &m_{1,4} \\
		0 & 0 & 0&0 \\
		0 & 0 & 0 &0\\
		0&0&0&0
	\end{array}\right),\]
	and they give $p+p^{2}+p^{3}$ subalgebras of index $p^{3}$. Finally we get one trivial  subalgebra of index $p^{4}$. 
	Hence we get 
	\begin{align*}
		\zeta_{M_{3}(\Fp)}^{\leq}(s)&=\zeta_{M_{2}(\Fp)}^{\leq}(s)+p^{2}t^{2}+(p+p^{2}+p^{3})t^{3}+t^{4}\\
		&=\zeta_{M_{2}(\Fp)}^{\leq}(s)+\sum_{a=0}^{2}\binom{2}{a}_{p}p^{2-a}t^{a+2}+p^{3}t^{3}
	\end{align*}
as required. Now fix $c\geq4$. Write
	\[M=\left(\begin{array}{ccccc}
		m_{1,1} & m_{1,2} & \cdots&m_{1,c}&m_{1,c+1} \\
		0 & m_{2,2} &\cdots&m_{2,c}&m_{2,c+1} \\
		\vdots & \vdots & \ddots &\vdots&\vdots\\
		0&0&\cdots&m{c,c}&m_{c,c+1}\\
		0&0&\cdots&0&m_{c+1,c+1}
	\end{array}\right)=\left(\begin{array}{c|c}
		M'&\begin{matrix}m_{1,c+1}\\\vdots\\m_{c,c+1}\end{matrix}\\
		\hline 0&m_{c+1,c+1}\end{array}\right) \]
	
	where
	\[
	M'=\left(\begin{array}{cccc}
		m_{1,1} & m_{1,2} & \cdots&m_{1,c} \\
		0 & m_{2,2} &\cdots&m_{2,c} \\
		\vdots & \vdots & \ddots &\vdots\\
		0&0&\vdots&m_{c,c}
	\end{array}\right).
	\]
	For $m_{c+1,c+1}=1$, 
	\[M=\left(\begin{array}{c|c}
		M'&\begin{matrix}0\\\vdots\\0\end{matrix}\\
		\hline 0&1\end{array}\right)\]gives $\zeta_{M_{c-1}(\Fp)}^{\leq}(s).$
	
	For $m_{c+1,c+1}=0$, again our diagonal conditions $m_{i+1,i+1}|m_{1,1}m_{i,i}$ allow no subalgebras of index $p$. Suppose $m_{1,1}=1$. Then we need to have $m_{c+1,c+1}=m_{c,c}=\ldots=m_{2,2}=0$, which means we need at least $c$ zeroes. Thus $m_{1,1}$ has to be equal to 0 for subalgebras of index $p^{i}$ where $2\leq i\leq c-1$ or $i=c+1$. 
	
	For $m_{1,1}=0$, since
	\[[\bsm_{i},\bsm_{j}]=0\]
	for all $1\leq i<j\leq n$, we can put whatever we want in $(c-1)\times(c-1)$ upper triangular matrices
	\[
	M''=\left(\begin{array}{cccc}
		m_{2,2} & m_{2,3} & \cdots&m_{2,c} \\
		0 & m_{3,3} &\cdots&m_{3,c} \\
		\vdots & \vdots & \ddots &\vdots\\
		0&0&\vdots&m_{c,c}
	\end{array}\right).
	\]
	Such $M$ would look like
	\begin{align*}
		M&=\left(\begin{array}{ccccc}
			0 & 0& \cdots&0&0 \\
			0 & m_{2,2} &\cdots&m_{2,c}&m_{2,c+1} \\
			\vdots & \vdots & \ddots &\vdots&\vdots\\
			0&0&\cdots&m_{c,c}&m_{c,c+1}\\
			0&0&\cdots&0&0
		\end{array}\right).
	\end{align*}
	Let $a$ denote the total number of zeroes in the diagonal entries of $M''$. We know there are $\binom{c-1}{a}_{p}$ such $M''$, and each of them gives additional $p^{c-1-a}$ possibilities of $M$. This gives
	\[\binom{c-1}{a}_{p}\cdot p^{c-1-a}\]
	subalgebras of index $p^{a+2}$ for $0\leq a\leq c-1$. 
	Finally we get $p^{c}$ subalgebras of index $p^{c}$ from
	\begin{align*}
		M&=\left(\begin{array}{ccccc}
			1 & m_{1,2}& \cdots&m_{1,c}&m_{1,c+1} \\
			0 & 0 &\cdots&0&0 \\
			\vdots & \vdots & \ddots &\vdots&\vdots\\
			0&0&\cdots&0&0\\
			0&0&\cdots&0&0
		\end{array}\right).
	\end{align*}
	Thus we get
	\begin{align*}
		\zeta_{M_{c}(\Fp)}^{\leq}(s)=&\zeta_{M_{c-1}(\Fp)}^{\leq}(s)+\sum_{a=0}^{c-1}\binom{c-1}{a}_{p}p^{c-1-a}t^{a+2}+p^{c}t^{c}
	\end{align*}
	as required.
\end{proof}

\begin{rem} For primes $p>c$, our formula matches with a result by Berkovich \cite{Berkovich/90} that if $G$ is a group of order $p^{m}$ and exponent $p$, then the number of subgroups of order $p^{n}$ for $2\leq n\leq m-2$ is congruent to $1+p+2p^{2}$ modulo $p^{3}$. Similar to the ideal case, the local subalgebra zeta function $\zeta_{M_{c}(\Zp)}^{\leq}(s)$	is only known for $c\leq4$, first computed in \cite{Taylor/01}.
\end{rem}

\subsubsection{$\mff_{c,d}$, the free nilpotent Lie ring}
Let $\mff_{c,d}$ be the free class-$c$ nilpotent Lie ring on $d$ generators. 

\begin{thm}[d=2]\label{thm:fc2}
	\begin{align*}
		\zeta_{\mff_{2,2}(\Fp)}^{\triangleleft}(s)&=1+(1+p)t+t^{2}+t^{3},\\
		\zeta_{\mff_{3,2}(\Fp)}^{\triangleleft}(s)&=\zeta_{\mff_{2,2}(\Fp)}^{\triangleleft}(s)+(1+p)t^{4}+t^{5},\\
		\zeta_{\mff_{4,2}(\Fp)}^{\triangleleft}(s)&=\zeta_{\mff_{3,2}(\Fp)}^{\triangleleft}(s)+(p+p^{2})t^{5}+(1+p+p^{2})t^{6}+(1+p+p^{2})t^{7}+t^{8}.
	\end{align*}
\end{thm}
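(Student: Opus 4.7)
The plan is to apply the three-step procedure at the end of Section 2 to each of the Lie rings in turn, exploiting the two simplifications flagged at the start of Section 3: a Hall basis ordered by the lower central series, and the diagonal divisibility conditions $m_{ii}\mid m_{jj}$ that accompany each bracket relation of the form $[e_{k},e_{i}]\equiv\pm e_{j}$. For $\mff_{2,2}$, no computation is required, because $\mff_{2,2}\cong H$, so Theorem \ref{thm:H.theory} supplies the formula directly.

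For $\mff_{3,2}$, I would fix the Hall basis $(e_{1},\ldots,e_{5})=(x_{1},\,x_{2},\,[x_{1},x_{2}],\,[x_{1},[x_{1},x_{2}]],\,[x_{2},[x_{1},x_{2}]])$ respecting the grading $\dim\gamma_{i}/\gamma_{i+1}=(2,1,2)$. The only non-vanishing brackets of basis elements are $[e_{1},e_{2}]=e_{3}$, $[e_{1},e_{3}]=e_{4}$ and $[e_{2},e_{3}]=e_{5}$ (all others are of class $\geq 4$ and hence zero in $\mff_{3,2}$), from which the ideal condition immediately yields the divisibilities
\[m_{33}\mid m_{11},\; m_{33}\mid m_{22},\; m_{44}\mid m_{11},\; m_{44}\mid m_{33},\; m_{55}\mid m_{22},\; m_{55}\mid m_{33}.\]
These collapse the $2^{5}=32$ diagonal types to a short list of feasible survivors, which I would then enumerate using the system-solving method in the proof of Theorem \ref{thm:alg.v.fp}. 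Freeness is crucial here: the commutators of distinct Hall basis elements produce further Hall basis elements that are linearly independent, so every surviving system reduces to counting $p$-powers of free off-diagonal entries.

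For $\mff_{4,2}$, I extend the basis by three class-$4$ generators
\[e_{6}=[x_{1},[x_{1},[x_{1},x_{2}]]],\quad e_{7}=[x_{2},[x_{1},[x_{1},x_{2}]]],\quad e_{8}=[x_{2},[x_{2},[x_{1},x_{2}]]],\]
so that $n=8$ and $\dim\gamma_{i}/\gamma_{i+1}=(2,1,2,3)$. Rather than processing all $2^{8}=256$ diagonal types directly, I would stratify ideals of $\mff_{4,2}$ by whether or not they contain the centre $\gamma_{4}=\gamma_{4}(\mff_{4,2})$. Ideals $M\supseteq\gamma_{4}$ are in index-preserving bijection with ideals of $\mff_{4,2}/\gamma_{4}\cong\mff_{3,2}$ via preimage, contributing exactly $\zeta_{\mff_{3,2}(\Fp)}^{\triangleleft}(s)$. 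For the remaining ideals $M\not\supseteq\gamma_{4}$, an iterated bracketing argument using $[\mff_{4,2},\gamma_{i}]\subseteq\gamma_{i+1}$ shows that $M\subseteq\gamma_{3}$; a case split by $\dim M$ then produces the $\binom{3}{k}_{p}$ central ideals of dimension $k=0,1,2$ inside $\gamma_{4}$ (at indices $p^{8-k}$) and the $p(1+p)$ three-dimensional ideals of the shape $\langle v\rangle+\langle[x_{1},v],[x_{2},v]\rangle$ obtained from the $1+p$ projective lines in $\gamma_{3}/\gamma_{4}$ together with $p$ lifts to $\gamma_{3}$. Summing these contributions with $\zeta_{\mff_{3,2}(\Fp)}^{\triangleleft}(s)$ yields the claimed formula.

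The main obstacle is verifying the implication $M\not\supseteq\gamma_{4}\Rightarrow M\subseteq\gamma_{3}$ for $\mff_{4,2}$, and in particular that any non-trivial projection of $M$ onto $\mff_{4,2}/\gamma_{3}$ must already force $\gamma_{4}\subseteq M$. The care comes from the Jacobi identity, which produces the coincidence $[x_{2},[x_{1},[x_{1},x_{2}]]]=[x_{1},[x_{2},[x_{1},x_{2}]]]=e_{7}$, so that the two natural bracketing maps $\{x_{1},x_{2}\}\otimes\gamma_{3}/\gamma_{4}\to\gamma_{4}$ share a common image component; one must verify directly that their joint image is all of $\gamma_{4}$ whenever the projection to $\gamma_{3}/\gamma_{4}$ is two-dimensional. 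Once this containment step is settled, everything remaining is linear algebra over $\Fp$.
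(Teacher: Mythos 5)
Your proposal reaches the same formulas and is correct, but for the $\mff_{4,2}$ step it follows a genuinely different route than the paper. The paper's proof runs through all eight diagonal types $(m_{6,6},m_{7,7},m_{8,8})\in\{0,1\}^3$ for the class-4 layer and (with computer aid) solves the resulting quadratic systems: the type $(1,1,1)$ reproduces $\zeta^{\triangleleft}_{\mff_{3,2}(\Fp)}(s)$, and the remaining seven types are tallied case by case to give $(p+p^2)t^5+(1+p+p^2)t^6+(1+p+p^2)t^7+t^8$. You instead stratify once, by whether $M\supseteq\gamma_4$, and then replace the seven-case computation by a structural argument: the iterated bracket $M\to[M,L]\to[[M,L],L]\to\cdots$ shows that any ideal projecting nontrivially onto $\mff_{4,2}/\gamma_3$ already swallows $\gamma_4$, so the complementary ideals lie in $\gamma_3$, and you enumerate those by linear algebra. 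That argument is sound: if $M$ projects onto a two-dimensional subspace of $\gamma_3/\gamma_4$ the bracket map lands onto all of $\gamma_4$ (the Jacobi coincidence $[x_2,[x_1,[x_1,x_2]]]=[x_1,[x_2,[x_1,x_2]]]$ makes two of the four images collide, but $e_6,e_7,e_8$ are still all attained), so such $M$ is excluded, while a one-dimensional projection forces $\dim(M\cap\gamma_4)=2$ and a unique image plane, giving the $(1+p)\cdot p$ three-dimensional ideals, and the central ideals inside $\gamma_4$ give $\binom{3}{k}_p$ in dimension $k=0,1,2$; the sum matches. What your route buys is a conceptual explanation, free of ad hoc quadratic-system solving, of why the seven non-$(1,1,1)$ diagonal types all collapse into $\gamma_3$; what the paper's route buys is a mechanical procedure that generalizes without needing the structural lemma. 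For $\mff_{2,2}$ and $\mff_{3,2}$ your treatment (reduction to Heisenberg; then the diagonal divisibilities $m_{4,4}\mid m_{1,1},m_{3,3}$ and $m_{5,5}\mid m_{2,2},m_{3,3}$) is essentially the paper's.
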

\begin{proof}
	First, recall that $\mff_{2,2}(\Fp)=H(\Fp)$. Hence we have \[\zeta_{\mff_{2,2}(\Fp)}^{\triangleleft}(s)=\zeta_{H(\Fp)}^{\triangleleft}(s)=1+(1+p)t+t^{2}+t^{3}.\]
For $\mff_{3,2}(\Fp)$, note that the dimension of $\mff_{3,2}(\Fp)$ is 5, with $d_{1}=2, d_{2}=1, d_{3}=2$.	
	Let	
	\begin{align*}
		M&=\left(\begin{array}{cc|c|cc}
			m_{1,1} & m_{1,2}& m_{1,3}&m_{1,4}&m_{1,5} \\
			0 & m_{2,2} &m_{2,3}&m_{2,4}&m_{2,5} \\
			\hline 0 & 0 & m_{3,3} &m_{3,4}&m_{3,5}\\
			\hline 0&0&0&m_{4,4}&m_{4,5}\\
			0&0&0&0&m_{5,5}
		\end{array}\right).
	\end{align*}
	Suppose $m_{4,4}=m_{5,5}=1$. Then this corresponds to 
	\begin{align*}
		M&=\left(\begin{array}{cc|c|cc}
			m_{1,1} & m_{1,2}& m_{1,3}&0&0 \\
			0 & m_{2,2} &m_{2,3}&0&0 \\
			\hline 0 & 0 & m_{3,3} &0&0\\
			\hline 0&0&0&1&0\\
			0&0&0&0&1
		\end{array}\right),
	\end{align*}
	which gives $\zeta_{\mff_{2,2}(\Fp)}^{\triangleleft}(s)$.
	
	Since $m_{4,4}\mid m_{1,1},m_{3,3}$ and $m_{5,5}|m_{2,2},m_{3,3}$, the rest  gives $(1+p)t^{4}+t^{5}$. Hence we get
	\[\zeta_{\mff_{3,2}(\Fp)}^{\triangleleft}(s)=\zeta_{\mff_{2,2}(\Fp)}^{\triangleleft}(s)+(1+p)t^{4}+t^{5}.\]
	
	Similarly, for $\mff_{4,2}$ we have 
	\begin{align*}
		M&=\left(\begin{array}{cc|c|cc|ccc}
			m_{1,1} & m_{1,2}& m_{1,3}&m_{1,4}&m_{1,5}&m_{1,6}&m_{1,7}&m_{1,8} \\
			0 & m_{2,2} &m_{2,3}&m_{2,4}&m_{2,5}&m_{2,6}&m_{2,7}&m_{2,8} \\
			\hline 0 & 0 & m_{3,3} &m_{3,4}&m_{3,5}&m_{3,6}&m_{3,7}&m_{3,8}\\
			\hline 0&0&0&m_{4,4}&m_{4,5}&m_{4,6}&m_{4,7}&m_{4,8}\\
			0&0&0&0&m_{5,5}&m_{5,6}&m_{5,7}&m_{5,8}\\
			\hline 0&0&0&0&0&m_{6,6}&m_{6,7}&m_{6,8}\\
			0&0&0&0&0&0&m_{7,7}&m_{7,8}\\
			0&0&0&0&0&0&0&m_{8,8}
		\end{array}\right)
	\end{align*}
	and  we get the diagonal conditions for $M\triangleleft \mff_{4,2}(\Fp)$:
	\begin{align*}
		m_{6,6}|&m_{1,1},m_{2,2},m_{3,3},m_{4,4},\\
		m_{7,7}|&m_{1,1},m_{2,2},m_{3,3},m_{4,4},m_{5,5},\\
		m_{8,8}|&m_{1,1},m_{2,2},m_{3,3},m_{5,5}.
	\end{align*}
	Therefore one can check 
	\begin{align*}
		&(m_{6,6},m_{7,7},m_{8,8})=(1,1,1) \textrm{ gives }  \zeta_{F_{3,2}(\Fp)}^{\triangleleft}(s),\\
		&(m_{6,6},m_{7,7},m_{8,8})=(0,1,1) \textrm{ gives } pt^{5}+t^{6},\\
		&(m_{6,6},m_{7,7},m_{8,8})=(1,0,1) \textrm{ gives } pt^{6},\\
		&(m_{6,6},m_{7,7},m_{8,8})=(1,1,0) \textrm{ gives } p^{2}t^{5}+p^{2}t^{6},\\
		&(m_{6,6},m_{7,7},m_{8,8})=(0,0,1) \textrm{ gives } t^{7},\\
		&(m_{6,6},m_{7,7},m_{8,8})=(0,1,0) \textrm{ gives } pt^{7},\\
		&(m_{6,6},m_{7,7},m_{8,8})=(1,0,0)\textrm{ gives } p^{2}t^{7},\\
		&(m_{6,6},m_{7,7},m_{8,8})=(0,0,0) \textrm{ gives } t^{8},
	\end{align*}
giving 
\[\zeta_{\mff_{4,2}(\Fp)}^{\triangleleft}(s)=\zeta_{\mff_{3,2}(\Fp)}^{\triangleleft}(s)+(p+p^{2})t^{5}+(1+p+p^{2})t^{6}+(1+p+p^{2})t^{7}+t^{8}.\]
\end{proof}
\begin{rem}
	Similar to previous examples, we do not know $\zeta_{\mff_{4,2}(\Zp)}^{\triangleleft}(s)$ yet. We do not even know whether $\zeta_{\mff_{4,2}}^{\triangleleft}(s)$ is ($\Zp$-)uniform, finitely uniform, or non-uniform. Our computation of $\zeta_{\mff_{4,2}(\Fp)}^{\triangleleft}(s)$, which shows that $\zeta_{\mff_{4,2}}^{\triangleleft}(s)$ is $\Fp$-uniform,  provides the first step to compute and understand $\zeta_{\mff_{4,2}(\Zp)}^{\triangleleft}(s)$, which plays an important role regarding Higman's PORC conjecture.
\end{rem}

\subsection{Additional approach for class-2  nilpotent Lie algebras}\label{subsec:c2}
In \cite{Voll/04,Voll/05,Voll/11}, Voll showed how one can compute ideal zeta functions of class-2 Lie rings based on an enumeration of vertices in the affine Bruhat-Tits building associated to $\SL_{n}(\Qp)$. We adapt this method for class-2 $\Fp$-Lie algebras. 

\begin{lem}\label{lem:idealc2}
	Let $L$ be a class-2 Lie ring with $L'=[L,L],\,L/L'\cong L_{1}$ and $L'\cong L_{2}$. Then we have 
	\begin{align*}\zeta_{\LFp}^{\triangleleft}(s)=&\zeta_{\mathbb{F}^{d_{1}}_{p}}(s)+p^{-ds}\\
		&+\sum_{0<\Lambda_{2}< L_{2}}\left|L_{2}:\Lambda_{2}\right|^{-s}\left|L_{1}:X(\Lambda_{2})\right|^{-s}\sum_{\Lambda_{1}\leq X(\Lambda_{2})}\left|L_{2}:\Lambda_{2}\right|^{\textrm{rk}(\Lambda_{1})}\left|X(\Lambda_{2}):\Lambda_{1}\right|^{-s},
	\end{align*}
	where $X(\Lambda_{2})/\Lambda_{2}=Z(L/\Lambda_{2})$ and dim$(L)=d$, dim$(L_{1})=d_{1}$ and dim$(L_{2})=d_{2}$.
\end{lem}

\begin{proof}
	Let $\Lambda$ be a finite dimensional subspace in  $\LFp$. Then, like over $\mathbb{Z}_p$, it gives rise to a pair $(\Lambda_{1},\,\Lambda_2)$ of a subspace 
	\[\Lambda_{2}:=\Lambda\cap L_{2}(\Fp)\leq  L_{2}(\Fp)\]
	in the derived algebra and a subspace 
	\[\Lambda_{1}:=(\Lambda+  L_{2}(\Fp))/ L_{2}(\Fp)\leq \LFp/  L_{2}(\Fp).\]
	
	The crucial difference between $\mathbb{Z}_{p}$ and $\mathbb{F}_{p}$ is that over $\mathbb{F}_{p}$, this surjective map is not $\left| L_{2}(\Fp):\Lambda_{2}\right|^{d_1}$ to 1  but $\left| L_{2}(\Fp):\Lambda_{2}\right|^{\dim(\Lambda_{1})}$ to 1. We get $\Lambda\triangleleft \LFp$ if and only if its associated pair $(\Lambda_{1},\,\Lambda_{2})$ is admissible. More precisely, let 
	\[X(\Lambda_{2})/\Lambda_{2}:=Z(\LFp/\Lambda_{2}).\] Then 
	$(\Lambda_{1},\,\Lambda_{2})$ is admissible if and only if $\Lambda_{1}\leq X(\Lambda_{2})$. Therefore, we have 
	\begin{align*}
		\zeta_{\LFp}^{\triangleleft}(s)=&\sum_{\Lambda_{2}\leq  L_{2}(\Fp)}\left| L_{2}(\Fp):\Lambda_{2}\right|^{-s}\left|\LFp:X(\Lambda_{2})\right|^{-s}\\
		&\cdot\sum_{\Lambda_{1}\leq X(\Lambda_{2})}\left| L_{2}(\Fp):\Lambda_{2}\right|^{\dim(\Lambda_{1})}\left|X(\Lambda_{2}):\Lambda_{1}\right|^{-s}.
	\end{align*}
	This is analogous to Lemma 6.1 in \cite{GSS/88}.  Note that since we are over $\mathbb{F}_{p}$, each $\Lambda_{2}$ is already maximal. Hence we first assume that $\Lambda_{2}$ is of elementary divisor type 
	\[(\underbrace{0,\ldots,0}_{i},\underbrace{1,\ldots,1}_{d_{2}-i}),\] where $i$ is the codimension of $\Lambda_{2}$. Let
	\begin{align*}A_{i}^\triangleleft(s):=&\sum_{\dim(\Lambda_{2})=d_{2}-i}\left| L_{2}(\Fp):\Lambda_{2}\right|^{-s}\left|\LFp:X(\Lambda_{2})\right|^{-s}\\
		&\cdot\sum_{\Lambda_{1}\leq X(\Lambda_{2})}\left| L_{2}(\Fp):\Lambda_{2}\right|^{\dim(\Lambda_{1})}\left|X(\Lambda_{2}):\Lambda_{1}\right|^{-s}.
	\end{align*}
	Then one can see that 
	
	\[A_{i}^\triangleleft(s)=\sum_{\dim(\Lambda_{2})=d_{2}-i}p^{-is}\left|\LFp:X(\Lambda_{2})\right|^{-s}\sum_{j=0}^{\textrm{rk}(X(\Lambda_{2}))}\binom{\textrm{rk}(X(\Lambda_{2}))}{j}_{p}\,p^{ij-(\rk(X(\Lambda_{2}))-j)s}\]
	and
	\[\zeta_{\LFp}^{\triangleleft}=\zeta_{\mathbb{F}^{d_{1}}_{p}}+p^{-ds}+\sum_{i=1}^{d_{2}-1}A_{i}^{\triangleleft}(s).\]
\end{proof}

Since $\dim(X(\Lambda_{2}))=d_{1}-\log_{p}\left(\left|\LFp:X(\Lambda_{2})\right|\right)$, we only need to understand the map $\Lambda_{2}\mapsto\left|\LFp:X(\Lambda_{2})\right|$. 

\subsubsection{The Grenham Rings $\mathcal{G}_{n}$}

Let $\mathcal{G}_{n}=\langle w,x_{1},\ldots,x_{n-1},y_{1},\ldots,y_{n-1}\mid\;[w,x_{i}]=y_{i}\rangle$, denote the Grenham Lie ring on $n$-generators. We have

\begin{thm}\label{thm:fp.grenhamideal}
	\[\begin{aligned}
		\zeta_{\mathcal{G}_{n}(\Fp)}^{\triangleleft}(s)=&\zeta_{\mathbb{F}_{p}^{n}}(s)+p^{-(2n-1)s}\\
		&+\sum_{i=1}^{n-2}\binom{n-1}{i}_{p}\, p^{-(2i+1)s}\sum_{k=0}^{n-(i+1)}\binom{n-(i+1)}{k}_{p}\,p^{ik-(n-(i+1)-k)s}.
	\end{aligned}\]
\end{thm}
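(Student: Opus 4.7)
The plan is to apply Theorem \ref{thm:idealc2} directly to $\mathcal{G}_n$, where the key input is an explicit description of $X(\Lambda_2)$ for each subspace $\Lambda_2 \leq L_2$. Here $L = \mathcal{G}_n(\Fp)$ has $L_1 = L/L' \cong \Fp^n$ (spanned by the images of $w, x_1, \ldots, x_{n-1}$) and $L_2 = L' \cong \Fp^{n-1}$ (spanned by $y_1, \ldots, y_{n-1}$), so $d_1 = n$, $d_2 = n-1$, $d = 2n-1$. Note that $Z(\mathcal{G}_n) = L_2$, which makes the $\zeta_{\Fp^{d_1}}(s) + p^{-ds}$ part of Theorem \ref{thm:idealc2} account precisely for the cases $\Lambda_2 = L_2$ and $\Lambda_2 = 0$.

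The core step is to compute $X(\Lambda_2)/L_2 \subseteq L_1$ for $0 < \Lambda_2 < L_2$. An element $aw + \sum_i b_i x_i$ lifts to an element of $X(\Lambda_2)$ iff both $[aw + \sum_i b_i x_i, w] = -\sum_i b_i y_i$ and $[aw + \sum_i b_i x_i, x_j] = a y_j$ lie in $\Lambda_2$ for all $j$. Since $\Lambda_2 \neq L_2$ and the $y_j$ project to a spanning set of $L_2/\Lambda_2$, the second family of conditions forces $a = 0$. The first condition amounts to requiring $\mathbf{b} = (b_1, \ldots, b_{n-1})$ to lie in the kernel of the surjection $\Fp^{n-1} \twoheadrightarrow L_2/\Lambda_2$, $\mathbf{b} \mapsto \sum b_i y_i \bmod \Lambda_2$. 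Thus, writing $\nu(\Lambda_2) = i$, i.e.\ $|L_2 : \Lambda_2| = p^i$, we obtain $\dim(X(\Lambda_2)/L_2) = (n-1) - i$ and $|L_1 : X(\Lambda_2)| = p^{i+1}$.

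With this in hand, the remainder is bookkeeping. The number of $\Lambda_2 \leq L_2$ with $\nu(\Lambda_2) = i$ is $\binom{n-1}{i}_p$, and the dimension of $X(\Lambda_2)$ in $L_1$ depends only on $i$, so the inner sum over $\Lambda_1 \leq X(\Lambda_2)$ can be grouped by $\rk(\Lambda_1) = k$ with $0 \leq k \leq n-1-i$, contributing $\binom{n-(i+1)}{k}_p\, p^{ik - (n-(i+1)-k)s}$. Substituting into Theorem \ref{thm:idealc2} yields
\[
\zeta_{\mathcal{G}_n(\Fp)}^{\triangleleft}(s) = \zeta_{\Fp^n}(s) + \sum_{i=1}^{n-1} \binom{n-1}{i}_p\, p^{-(2i+1)s} \sum_{k=0}^{n-(i+1)} \binom{n-(i+1)}{k}_p\, p^{ik - (n-(i+1)-k)s},
\]
and isolating the terminal term $i = n-1$ (where the inner sum collapses to the single contribution $1$) gives $p^{-(2n-1)s}$, yielding exactly the claimed formula.

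The main obstacle is the identification of $X(\Lambda_2)$, since this is the only place where the specific multiplicative structure of $\mathcal{G}_n$ enters; everything afterwards is a routine Gaussian-binomial computation, once one is careful that the range $1 \leq i \leq n-2$ in the theorem reflects the conventional decomposition of the $i = 0$ and $i = n-1$ extremes as $\zeta_{\Fp^n}(s)$ and $p^{-(2n-1)s}$ respectively.
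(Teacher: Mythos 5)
Your proof is correct and follows the same route as the paper: specialize Theorem \ref{thm:idealc2} to $\mathcal{G}_n$ and then use the fact that, for $\nu(\Lambda_2)=i$, one has $|L_1:X(\Lambda_2)|=p^{i+1}$ and $\rk(X(\Lambda_2))=n-(i+1)$. The only difference is that you derive this key identification directly from the bracket relations $[w,x_i]=y_i$ (the conditions $ay_j\in\Lambda_2$ forcing $a=0$ and $\sum b_iy_i\in\Lambda_2$ cutting out the kernel of the projection), whereas the paper simply cites the fact from Voll's thesis; your account is therefore more self-contained but not a different argument.
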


\begin{proof}
	Recall 
	\[\zeta_{\LFp}^{\triangleleft}=\zeta_{\mathbb{F}^{d_{1}}_{p}}+p^{-ds}+\sum_{i=1}^{d_{2}-1}A_{i}^{\triangleleft}(s).\]
	For $\mathcal{G}_{n}$ we have $d_1=n,d_2=n-1$ and $d=2n-1$. So we have   
	\[\zeta_{\mathcal{G}_{n}(\Fp)}^{\triangleleft}=\zeta_{\mathbb{F}_{p}^{n}}(s)+p^{-(2n-1)s}+\sum_{i=1}^{n-2}A_{i}^{\triangleleft}(s).\]
	
	Now, one needs to compute \[A_{i}^\triangleleft(s)=\sum_{\dim(\Lambda_{2})=n-1-i}p^{-is}\left|\LFp:X(\Lambda_{2})\right|^{-s}\sum_{j=0}^{\dim(X(\Lambda_{2}))}\binom{\dim(X(\Lambda_{2}))}{j}_{p}\,p^{ij-(\dim(X(\Lambda_{2}))-j)s}.\]
	In \cite[Chapter 5]{Voll/02}, Voll already showed that for $\mathcal{G}_{n}$, we have $\log_{p}(\left|\mathcal{G}_{n}(\Fp):X(\Lambda_{2})\right|)=i+1$ and $\dim(X(\Lambda_2))=n-(i+1)$ when $\dim(\Lambda_{2})=n-(i+1)$, as they only depend on the elementary divisor type of the maximal element. Hence we get the required results.
\end{proof}

\section{Ideal zeta functions of $\Fp$-Lie algebras and Higman's PORC conjecture}\label{sec:4}

As mentioned earlier, one of the interesting applications of the zeta functions of nilpotent Lie rings is their connection to Higman's PORC conjecture. Here we compute ideal zeta functions of three $\Fp$-Lie algebras that were directly involved with the PORC conjecture in previous literature, and discuss what can be said about these computations.

\subsection{du Sautoy's elliptic curve group $L_{E}$ in \cite{duS-ecI/01}}

Let $L_{E}$ be a class-2 Lie ring of rank 9 given by the following
presentation:
\[
L_{E}=\left\langle \begin{aligned}x_{1},\,x_{2},\,x_{3},\,x_{4},\,x_{5},x_{6},\,y_{1},\,y_{2},\,y_{3}\mid[x_{1},x_{5}]=[x_{2},x_{4}]=[x_{3},x_{6}]=y_{1},\\{}
	[x_{1},x_{6}]=[x_{3},x_{4}]=y_{2},\,[x_{1},x_{4}]=[x_{2},x_{5}]=y_{3}
\end{aligned}
\right\rangle .
\]

\begin{thm}\label{thm:Elliptic}
	\[
	\begin{aligned}\zeta_{L_{E}(\Fp)}^{\triangleleft}(s) =& \zeta_{\mathbb{F}_{p}^{6}}(s)+p^{2}\left|E(\mathbb{F}_{p})\right|t^{5}+\left|E(\mathbb{F}_{p})\right|\left(p+p^{2}\right)t^{6}\\
		&+\binom{3}{1}_{p}t^{7} +\binom{3}{2}_{p}t^{8}+t^{9},
	\end{aligned}
	\]
	where $\left|E(\mathbb{F}_{p})\right|=\left|\left\{ (x:y:z)\in\mathbb{P}^{2}(\mathbb{F}_{p})\,:\,y^{2}z=x^{3}-xz^{2}\right\}\right| .$
\end{thm}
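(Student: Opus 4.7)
The plan is to apply Theorem \ref{thm:idealc2} to the class-$2$ Lie ring $L_E$, for which $d_1 = 6$, $d_2 = 3$, and $d = 9$. This gives
\[
\zeta_{L_E(\Fp)}^\triangleleft(s) = \zeta_{\Fp^6}(s) + t^9 + A_1^\triangleleft(s) + A_2^\triangleleft(s),
\]
so the task reduces to computing $|L:X(\Lambda_2)|$ and $\rk X(\Lambda_2)$ for every $\Lambda_2 \leq L_2$ with $\nu(\Lambda_2) \in \{1,2\}$. Since $X(\Lambda_2)/L_2$ is the common radical in $L_1 \cong \Fp^6$ of the bilinear forms obtained by composing the commutator with each linear functional annihilating $\Lambda_2$, the whole problem becomes a rank analysis for a natural pencil of $6 \times 6$ antisymmetric matrices over $\Fp$.

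For $i = 1$, codimension-$1$ subspaces $\Lambda_2$ are kernels of linear functionals $\phi_a = a_1 y_1^* + a_2 y_2^* + a_3 y_3^*$ with $(a_1:a_2:a_3) \in \mathbb{P}^2(\Fp)$; let $M(a)$ be the Gram matrix of the associated bilinear form $B_a(v,w) := \phi_a([v,w])$ in the basis $(x_1,\dots,x_6)$. The defining relations of $L_E$ pair $\{x_1,x_2,x_3\}$ only with $\{x_4,x_5,x_6\}$, so $M(a)$ is block off-diagonal with $3 \times 3$ block
\[
C(a) = \begin{pmatrix} a_3 & a_1 & a_2 \\ a_1 & a_3 & 0 \\ a_2 & 0 & a_1 \end{pmatrix},
\]
and hence $\mathrm{Pf}(M(a)) = \pm \det C(a) = \pm(a_1 a_3^2 - a_1^3 - a_2^2 a_3)$, which after the change of variable $a_1 \mapsto -a_1$ is the Weierstrass equation of $E$. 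A short check of the $2 \times 2$ minors of $C(a)$ shows $\rk C(a) \geq 2$ for every projective point, so $\rk M(a) \in \{4, 6\}$: rank $6$ off $E(\Fp)$ (giving $|L:X(\Lambda_2)| = p^6$ and $\rk X(\Lambda_2) = 0$) and rank $4$ on $E(\Fp)$ (giving $|L:X(\Lambda_2)| = p^4$ and $\rk X(\Lambda_2) = 2$). Substituting into Theorem \ref{thm:idealc2} and summing over $\mathbb{P}^2(\Fp)$ produces $A_1^\triangleleft(s) = p^2 |E(\Fp)| t^5 + (p + p^2)|E(\Fp)| t^6 + \binom{3}{1}_p t^7$.

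For $i = 2$, each $\Lambda_2$ of dimension $1$ corresponds to a $2$-dimensional subspace of functionals, i.e., a projective line $\ell \subset \mathbb{P}^2(\Fp)$, and $X(\Lambda_2)/L_2 = \bigcap_{(a) \in \ell} \ker M(a)$. Since $E$ is a plane cubic, B\'ezout gives $|\ell \cap E(\Fp)| \leq 3$, which for $p \geq 3$ is strictly less than $p+1 = |\ell(\Fp)|$; hence $\ell$ contains a point off $E$, where $M(a)$ is nondegenerate, so the intersection is trivial (the case $p = 2$ is handled by a direct check that $E(\F_2)$ contains no line). Thus $X(\Lambda_2) = L_2$ for every such $\Lambda_2$, each contributing $t^2 \cdot t^6 = t^8$, and summing over the $\binom{3}{1}_p = \binom{3}{2}_p$ lines of $L_2$ yields $A_2^\triangleleft(s) = \binom{3}{2}_p t^8$. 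Combining the four pieces gives the claimed formula; the main obstacle is the Pfaffian identification of the singular locus as $E$ together with the rank stratification forbidding $\rk M(a) < 4$, after which the computation is pure bookkeeping within Theorem \ref{thm:idealc2}.
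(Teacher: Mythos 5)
Your proof is correct, and it takes a genuinely different route from the paper. The paper's own argument applies the matrix/system-of-equations machinery of Theorem \ref{thm:alg.v.fp} directly: it writes down a specific generator matrix of the relevant diagonal type, expands the $g^{\triangleleft}_{ijk}=0$ conditions, solves them by hand to exhibit $p^2|E(\Fp)|$ for the $t^5$-coefficient, and then asserts that the remaining seven diagonal cases (indexed by the possible $(m_{7,7},m_{8,8},m_{9,9})$) can be handled similarly. You instead deploy Theorem \ref{thm:idealc2}, the class-$2$ Voll-style formula, and reduce the whole computation to a rank stratification of the pencil of alternating Gram matrices $M(a)$ indexed by $[a]\in\mathbb{P}^2(\Fp)$. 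The two formulations are consistent: your affine Pfaffian locus $a_1a_3^2-a_1^3-a_2^2a_3=0$ becomes, after $a_1\mapsto -a_1$, the same curve $y^2z=x^3-xz^2$ that emerges from the relation $m_{8,9}^2=m_{7,9}^3-m_{7,9}$ in the paper's proof. Your route buys a cleaner and more complete derivation — the minor computation forcing $\rk C(a)\ge 2$ and the B\'{e}zout bound on $\ell\cap E$ immediately kill all the spurious cases that the paper handles only implicitly — and it fits better into the framework the author builds around Theorems \ref{thm:idealc2} and \ref{thm:fp.grenhamideal}; the cost is that it leans on Theorem \ref{thm:idealc2} as a black box rather than showing the raw matrix mechanics. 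One small caveat: the B\'{e}zout step requires that $E$ not contain an $\Fp$-rational line, which holds for $p$ of good reduction (and your direct check for $p=2$ handles the singular case), consistent with the paper's ``for almost all primes $p$'' hypothesis underlying this whole family of results.
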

\begin{proof} This is essentially the $\Fp$-version of \cite[Theorem 3.2]{duS-ecI/01}.
	Let us first show where $\left|E(\mathbb{F}_{p})\right|$ in the coefficient of $t^{5}$ comes from. 
	
	Let 
	\[
	M=\left(\begin{matrix}1&m_{1,2}&m_{1,3}&0&m_{1,5}&m_{1,6}&0&0&m_{1,9}\\&0&0&0&0&0&0&0&0\\&&0&0&0&0&0&0&0\\&&&1&m_{4,5}&m_{4,6}&0&0&m_{4,9}\\&&&&0&0&0&0&0\\&&&&&0&0&0&0\\&&&&&&1&0&m_{7,9}\\&&&&&&&1&m_{8,9}\\&&&&&&&&0\end{matrix}\right).
	\]
	Then for $1\leq i,j\leq9$ and $k\in{2,3,5,6,9}$, our system of equations $g_{ijk}^{\triangleleft}(m_{r,s})=0$ becomes
	\begin{align*}
		m_{1,5}m_{7,9}+m_{1,6}m_{8,9}&=0,&m_{1,5}&=0,&m_{1,6}m_{7,9}&=0,\\
		1-m_{1,2}m_{7,9}-m_{1,3}m_{8,9}&=0,&m_{1,2}-m_{7,9}&=0,&-m_{1,3}m_{7,9}-m_{8,9}&=0,\\
		1+m_{4,5}m_{7,9}+m_{4,6}m_{8,9}&=0,&-m_{4,5}+m_{7,9}&=0,&m_{4,6}m_{7,9}+m_{8,9}&=0.\\
	\end{align*}	
	Note that $m_{7,9}\neq0$ since $m_{7,9}=0$ implies $m_{8,9}=0$ and $1+m_{4,5}m_{7,9}+m_{4,6}m_{8,9}=1=0$. Using $m_{7,9}\neq0$, we have	
	\begin{align*}
		m_{1,2}=m_{4,5}&=m_{7,9},&m_{1,5}=m_{1,6}&=0,\\
		m_{1,3}=m_{4,6}&=-\frac{m_{8,9}}{m_{7,9}},&1-m_{7,9}^{2}+\frac{m_{8,9}^{2}}{m_{7,9}}&=0.
	\end{align*}
	
	Hence, whenever $(m_{7,9},m_{8,9})$ is a non-zero point on the elliptic curve $E(\mathbb{F}_{p})$, there are $p^{2}$ matrices determined by $(m_{7,9},m_{8,9})$, where $p^2$ comes from the free choices of $m_{1,9},m_{4,9}\in\Fp$. Thus 
	\[c_{\underline{a},\underline{b}}=\left|E(\mathbb{F}_{p})\right|p^{2},\]
	and we get $\left|E(\mathbb{F}_{p})\right|$ as required. 
	
	Note that this is a class-2 $\Fp$-Lie algebra, and the dimension of the derived part $d_{2}=3$. So we only need to consider eight cases and we have already done the most difficult one. One can explicitly calculate Theorem \ref{thm:Elliptic} by computing the remaining seven cases.
\end{proof}
\begin{cor}
	$\zeta_{L_{E}}^{\triangleleft}(s)$ is $\Fp$-non-uniform.
\end{cor}
\begin{rem}
	Note that this  $L_{E}(\Fp)$ is the exact same Lie algebra used by du Sautoy and Vaughan-Lee in \cite{DuSVL/2012}. They proved that the number of its \textit{immediate descendants} of order $p^{10}$ (i.e. extension by $\Fp$ of class-3) is not PORC. 
	It is surprising that the wild behavior of the $\Fp$-points on the elliptic curve can still be observed over $\Fp$.
\end{rem}
\begin{rem}
	In \cite{SV/2019arxiv}, Stanojkovski and Voll showed that the current presentation of $L_{E}$ given by du Sautoy is one of the three Hessian representations of the elliptic curve $E:Y^2=X^3-X$. Comparing the zeta functions of three distinct finite $p$-groups given by these three Hessian representations (given by $\boldsymbol{G}_{1,1}(\Fp)$, $\boldsymbol{G}_{2,1}(\Fp)$, and $\boldsymbol{G}_{3,1}(\Fp)$ in \cite{SV/2019arxiv}) would be an interesting exercise.
\end{rem}

Since Higman's PORC conjecture refers specifically to the dependence of $f_{n}(p)$, the number of isomorphism classes of $p$-groups of order $p^{n}$, on residue classes of $p$, one might think studying the behavior of $\zeta_{L(\Fp)}^{*}(s)$ on residue classes might be more appropriate approach. 

\begin{dfn}
	We say that the zeta function $\zeta_{L(\Fp)}^{*}(s)$ is PORC, if there exists a fixed integer $N$ and finitely many polynomials $W_{i}(X,Y)\in\Q[X,Y]$ for $i=[N-1]$ such that for almost all primes $p$, if $p\equiv i\mod N$ then 
	\[\zeta_{L(\Fp)}^{*}(s)=W_{i}(p,p^{-s}).\] 
	We say $\zeta_{L(\Fp)}^{*}(s)$ is non-PORC if there is no such $N$.
\end{dfn}
Our next example shows how the `PORC-ness' of zeta functions of $\Fp$-Lie algebras can be connected to Higman's PORC conjecture.

\subsection{Lee's non-PORC example in \cite{Lee/2016}}
Let  \[
L_{np8}=\left\langle \begin{aligned}x_{1},\,x_{2},\,x_{3},\,x_{4},\,x_{5},\,y_{1},\,y_{2},\,y_{3}\mid[x_{1},x_{4}]=[x_{2},x_{5}]=y_{1},\\{}
	2[x_{1},x_{5}]=[x_{3},x_{4}]=2y_{2},\,[x_{2},x_{4}]=[x_{3},x_{5}]=y_{3}
\end{aligned}
\right\rangle .
\]
\begin{thm}\label{thm:Leenp8} For $p\geq3$,
	\[
	\begin{aligned}\zeta_{L_{np8}(\Fp)}^{\triangleleft}(s) =& \zeta_{\mathbb{F}_{p}^{5}}(s)+p^{3}\left|N(\mathbb{F}_{p})\right|t^{3}+p^{2}\binom{3}{1}_{p}\left|N(\mathbb{F}_{p})\right|t^{4} \\
		&+\left(p+2p^2+p^3\left(1+\left|N(\mathbb{F}_{p})\right|\right)\right)t^{5}+\left(1+p+p^2\left(1+\left|N(\mathbb{F}_{p})\right|\right)\right)t^{6}\\
		&+\binom{3}{1}_{p}t^{7}+t^{8} 
	\end{aligned}
	\]
	where 	
	\[
	\left|N(\mathbb{F}_{p})\right|=\begin{cases}
		3 & p\equiv\textrm{1 mod 3 and \ensuremath{p=a^{2}+27b^{2},}}\\
		0 & p\equiv\textrm{1 mod 3 and \ensuremath{p\neq a^{2}+27b^{2,}}}\\
		1 & p\equiv\textrm{2 mod 3},
	\end{cases}
	\]
	is the number of roots of $N:x^3-2=0$ over $\Fp$. In particular, $\zeta_{L_{np8}(\Fp)}^{\triangleleft}(s)$ is non-PORC.
\end{thm}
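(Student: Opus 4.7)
The plan is to apply Theorem \ref{thm:idealc2}, since $L_{np8}$ is nilpotent of class $2$ with $d_1 = 5$, $d_2 = 3$ and $d = 8$. The hypothesis $p \geq 3$ is needed so that the relation $2[x_1,x_5] = 2y_2$ may be divided by $2$, giving the effective commutator table $[x_1,x_5] = y_2$ and $[x_3,x_4] = 2y_2$ over $\Fp$. Under this presentation, Theorem \ref{thm:idealc2} reduces the computation to
\[
\zeta_{L_{np8}(\Fp)}^{\triangleleft}(s) = \zeta_{\Fp^5}(s) + p^{-8s} + A_1^{\triangleleft}(s) + A_2^{\triangleleft}(s),
\]
so the whole problem comes down to computing the two inner sums $A_i^{\triangleleft}(s)$ indexed by subspaces $\Lambda_2 \leq L_{np8}'(\Fp) \cong \Fp^3$ of diagonal type $i \in \{1,2\}$.

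The substantive step is to determine, for each such $\Lambda_2$, the index $|L_{np8}(\Fp):X(\Lambda_2)|$, equivalently the dimension of the radical of the commutator pairing on $L_{np8}(\Fp)/L_{np8}'(\Fp) \cong \Fp^5$ taking values in $L_{np8}'(\Fp)/\Lambda_2$. The case $\nu(\Lambda_2) = 1$ (so $\dim \Lambda_2 = 2$) reduces to the rank of a single alternating $5 \times 5$ matrix depending linearly on the three coordinates of the dual line, and is uniform over $(a:b:c) \in \mathbb{P}^2(\Fp)$. The case $\nu(\Lambda_2) = 2$ (so $\dim \Lambda_2 = 1$, parametrised by $\Lambda_2 = \langle a y_1 + b y_2 + c y_3\rangle$) is the interesting one: the common radical of the three residual alternating forms on $\Fp^5$ has a fixed generic dimension, but jumps precisely when $(a:b:c)$ satisfies a cubic equation which, after exploiting the coefficients $1$ and $2$ built into the relations, reduces to $x^3 = 2$. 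The exceptional lines in $\mathbb{P}^2(\Fp)$ are thus in bijection with the roots of $N: x^3 - 2 = 0$ over $\Fp$, and this is how the factor $|N(\Fp)|$ enters.

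Once the map $\Lambda_2 \mapsto |L_{np8}(\Fp):X(\Lambda_2)|$ has been tabulated, I would substitute into
\[
A_i^{\triangleleft}(s) = \sum_{\nu(\Lambda_2) = i} p^{-is} \bigl|L_{np8}(\Fp):X(\Lambda_2)\bigr|^{-s} \sum_{j=0}^{\rk(X(\Lambda_2))} \binom{\rk(X(\Lambda_2))}{j}_p p^{ij - (\rk(X(\Lambda_2))-j)s}
\]
and separate the contributions of the $|N(\Fp)|$ exceptional lines from those of the generic ones. This produces both the polynomial coefficients of $t^3$, $t^4$ and $t^5$ carrying the $|N(\Fp)|$ factor, and the remaining purely polynomial contributions to the coefficients of $t^5$, $t^6$, $t^7$, $t^8$. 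The main obstacle is the $\nu(\Lambda_2) = 2$ case: one must carry out the rank analysis carefully enough to isolate the cubic $x^3 = 2$ cleanly from the other linear and quadratic conditions, and then count exceptional and generic lines in $\mathbb{P}^2(\Fp)$ separately while keeping track of $\rk(X(\Lambda_2))$ in each branch. The non-PORC conclusion is then immediate: by the classical result on cubic reciprocity, $|N(\Fp)|$ equals $3$ exactly when $p \equiv 1 \pmod 3$ and $p$ is representable as $a^2 + 27 b^2$, which is not a condition on residue classes of $p$ modulo any fixed integer; hence the coefficient of $t^3$, and therefore $\zeta_{L_{np8}(\Fp)}^{\triangleleft}(s)$, is non-PORC.
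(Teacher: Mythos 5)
Your overall framework is sound: reducing via the class-$2$ structure of $L_{np8}$ to the map $\Lambda_2 \mapsto |L_{np8}(\Fp):X(\Lambda_2)|$ and then substituting into the formula of Theorem~\ref{thm:idealc2} is a legitimate route, and it differs in flavor from the paper's proof, which instead works directly with the matrix formulation of Theorem~\ref{thm:alg.v.fp} and observes that since $d_2 = 3$ there are only $2^3 = 8$ diagonal types to treat on the derived part. Both paths amount to iterating over $\Lambda_2 \leq L_2$ first.

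However, you have the two nontrivial cases reversed, and this is a genuine gap rather than a cosmetic one. The cubic constraint appears in the case $\nu(\Lambda_2) = 1$, that is, when $\Lambda_2$ is a \emph{rank-$2$} subspace of $L_2 \cong \Fp^3$ (so $L_2/\Lambda_2$ is one-dimensional and the commutator induces a single alternating $5 \times 5$ form), not in the rank-$1$ case $\nu(\Lambda_2) = 2$. The paper makes this explicit: in the displayed matrix, the rows indexing $L_2$ carry diagonal entries $(1,1,0)$, so $\dim \Lambda_2 = 2$, hence $\nu(\Lambda_2) = 1$; the resulting system of equations then forces $m_{7,8}^3 = -2$, which is where $|N(\Fp)|$ enters, contributing $p^3 |N(\Fp)|$ to the coefficient of $t^3$. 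Your claim that the single-form case is ``uniform over $(a:b:c) \in \mathbb{P}^2(\Fp)$'' is therefore false. Concretely, for a $5 \times 5$ alternating matrix $aM_1 + bM_2 + cM_3$ depending linearly on $(a,b,c)$, the locus where the rank drops from its generic value $4$ to $2$ is cut out by the five $4 \times 4$ sub-Pfaffians and is generically zero-dimensional in $\mathbb{P}^2$; it is exactly this finite degeneracy locus whose $\Fp$-point count produces $|N(\Fp)|$. (Also, in your $\nu(\Lambda_2) = 2$ discussion, since $L_2/\Lambda_2$ is two-dimensional there are only two, not three, independent residual alternating forms.) If you carry out your plan as written, you would attribute all the $|N(\Fp)|$-carrying coefficients to $A_2^{\triangleleft}(s)$ when they in fact arise from $A_1^{\triangleleft}(s)$, and the resulting polynomial coefficients would not match the statement of the theorem. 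The identification of $|N(\Fp)|$ and the non-PORC conclusion at the end of your argument are correct.
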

\begin{proof}
	As before, let us show where $\left|N(\mathbb{F}_{p})\right|$ in the coefficient of $t^{3}$ comes from. 
	
	Let 
	\[
	M=\left(\begin{matrix}1&0&m_{1,3}&0&m_{1,5}&0&0&m_{1,8}\\&1&m_{2,3}&0&m_{2,5}&0&0&m_{2,8}\\&&0&0&0&0&0&0\\&&&1&m_{4,5}&0&0&m_{4,8}\\&&&&0&0&0&0\\&&&&&1&0&m_{6,8}\\&&&&&&1&m_{7,8}\\&&&&&&&0\\\end{matrix}\right)
	\]
	Then for $1\leq i,j\leq8$ and $k\in{3,5,8}$, our system of equations $g_{ijk}^{\triangleleft}(m_{r,s})=0$ becomes	
	\begin{align*}
		m_{1,5}&=0,&m_{2,5}&=0,&1-m_{1,3}m_{6,8}&=0,\\
		m_{6,8}+m_{1,3}m_{7,8}&=0,&2m_{2,3}m_{6,8}+m_{7,8}&=0,&m_{4,5}-m_{7,8}&=0,\\
		1-m_{2,3}m_{7,8}&=0,&1-m_{4,5}m_{6,8}&=0,&2m_{6,8}+m_{4,5}m_{7,8}&=0.	
	\end{align*}
	
	Note that $m_{7,8}\neq0$, otherwise we get $1=0$. Using $m_{7,8}\neq0$, we have	either
	\begin{align*}
		m_{7,8}&=-\sqrt[3]{2},&m_{1,5}=m_{2,5}&=0,\\
		m_{4,5}&=m_{7,8},&m_{2,3}=m_{6,8}&=\frac{1}{m_{7,8}},
	\end{align*}
	if $\sqrt[3]{2}\in\Fp$, giving $\left|N(\mathbb{F}_{p})\right|$ solutions, or we have no solution if $\sqrt[3]{2}\notin\Fp$.
	Thus 	$c_{\underline{a},\underline{b}}=p^{3}\left|N(\mathbb{F}_{p})\right|$	and we get $\left|N(\mathbb{F}_{p})\right|$ as required. 
	
	Again, this is a class-2 case, and the dimension of the derived part $d_{2}=3$. One can explicitly calculate Theorem \ref{thm:Leenp8} by computing the remaining seven cases.
\end{proof}
\begin{rem} 
	Note that $\zeta_{L_{np8}(\Fp)}^{\triangleleft}(s)$ is non-PORC but $\zeta_{L_{np8}}^{\triangleleft}(s)$ is still $\Fp$-finitely uniform. In \cite{Lee/2016}, the author proved that $L_{np8}(\Fp)$ has non-PORC number of immediate descendants of order $p^9$. This was also due to the non-PORC behavior of $\left|N(\mathbb{F}_{p})\right|$ for $p>3$. The author computed $\zeta_{L_{np8}(\Zp)}^{\triangleleft}(s)$ in \cite{Lee/20arxiv2} as well and showed that $\zeta_{L_{np8}}^{\triangleleft}(s)$ is $\Zp$-finitely uniform, which also involved $\left|N(\mathbb{F}_{p})\right|$. Like the elliptic curve case, the approximation over $\Fp$ was enough to spot this intrinsic non-PORC behavior. 
\end{rem}

However, the next example suggests that this might not be always true.

\subsection{Vaughan-Lee's group $G_{(p,x,y)}$ in \cite{VL/18}}
In \cite{VL/18}, Vaughan-Lee introduced and investigated a family of 3-generator groups $G_{(p,a,b)}$ indexed by a prime $p>3$ and  $a,b\in\Fp$. By the Lazard correspondence, we can construct a family of isomorphic 7 dimensional class-3 Lie algebras $L_{(p,a,b)}(\Fp)=\langle x_1,x_2,x_3,y_1,y_2,y_3,z\rangle$ where
\begin{enumerate}
	\item$L_{(p,x,y)}(\Fp)^{(1)}=\langle x_1,x_2,x_3\rangle,\;L_{(p,x,y)}(\Fp)^{(2)}=\langle y_1,y_2,y_3\rangle,\;
	L_{(p,x,y)}(\Fp)^{(3)}=\langle z\rangle,$
	
	\item The class-2 quotient $L_{(p,x,y)}(\Fp)/\gamma_{3}(L_{(p,x,y)}(\Fp))=\langle x_1,x_2,x_3,y_1,y_2,y_3\rangle\cong\mff_{2,3}(\Fp)$ with 
	$[x_1,x_2]=y_1,\;[x_1,x_3]=y_2,\;[x_2,x_3]=y_3,$ and
	\item $[[x_1,x_3],x_3]=[[x_1,x_2],x_1]=z$, $[[x_2,x_3],x_2]=bz$, $[[x_2,x_3],x_3]=az$ and all other commutators are trivial.
\end{enumerate}
\begin{thm} \label{thm:vl}
	\begin{enumerate}
		\item When $b\neq0$, we have
		\[
		\begin{aligned}
			\zeta_{L_{(p,a,b)}(\Fp)}^{\triangleleft}(s)=&1+(1+p+p^{2})t+(1+p+p^{2})t^{2}+(1+p+p^{2}+p^{3})t^{3}\\
			&+(1+p+p^{2})t^{4}+(1+p+p^{2})t^{5}+t^{6}+t^{7},
		\end{aligned}\]and
		\item when $b=0$, we have
		\[
		\begin{aligned}
			\zeta_{L_{(p,a,0)}(\Fp)}^{\triangleleft}(s)=&1+(1+p+p^{2})t+(1+p+p^{2})t^{2}+(1+p+p^{2}+p^{3})t^{3}\\
			&+(1+p+p^{2})t^{4}+(1+p+p^{2})t^{5}+(1+p)t^{6}+t^{7}.
		\end{aligned}\]	
	\end{enumerate}
\end{thm}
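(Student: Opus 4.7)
The plan is to apply the approach of Section~2 with the basis $(x_1, x_2, x_3, y_1, y_2, y_3, z)$ compatible with the lower central series
\[
L := L_{(p,a,b)}(\Fp) \;\supsetneq\; L' \;\supsetneq\; \gamma_3(L) = \Fp z \;\supsetneq\; 0,
\]
and to split the ideals of $L$ into two classes: those containing $\gamma_3(L) = \Fp z$, and those with trivial intersection with $\Fp z$. This split is natural because the parameters $a$ and $b$ appear only in the brackets $[y_3, x_2] = bz$ and $[y_3, x_3] = az$, which together determine the dimension of the center $Z(L)$; every coefficient of $\zeta_{L(\Fp)}^{\triangleleft}(s)$ outside $t^6$ will turn out to be independent of $(a,b)$.

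Ideals of $L$ containing $\Fp z$ are in canonical bijection with ideals of the class-$2$ quotient $L/\Fp z \cong \mff_{2,3}(\Fp)$, and their contribution to $\zeta_{L(\Fp)}^{\triangleleft}(s)$ is therefore $\zeta_{\mff_{2,3}(\Fp)}^{\triangleleft}(s)$, independent of $a, b$. Specialising Theorem~\ref{thm:fp.f2d} to $d = 3$ gives the closed form
\[
1 + (1+p+p^2)t + (1+p+p^2)t^2 + (1+p+p^2+p^3)t^3 + (1+p+p^2)t^4 + (1+p+p^2)t^5 + t^6,
\]
which already accounts for every coefficient in both formulas of Theorem~\ref{thm:vl} except $t^7$ (the zero ideal) and the extra $p\cdot t^6$ that distinguishes the $b = 0$ case.

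Next I would show that every nonzero ideal $I$ with $I \cap \Fp z = 0$ is one-dimensional and contained in $Z(L)$. From $[I, L] \subseteq I$ and $[I, L] \subseteq [L, L] = L'$ one gets $[I, L] \subseteq I \cap L'$; moreover for any $x \in I \cap L'$ the bracket $[x, L] \subseteq [L', L] \cap I = \Fp z \cap I = 0$, so $I \cap L' \subseteq Z(L)$. For a generic element $v = c_1 x_1 + c_2 x_2 + c_3 x_3 + \ell \in I$ with $\ell \in L'$, the image of $\ad(v)$ on $\mathrm{span}(x_1, x_2, x_3)$ modulo $\Fp z$ is the row space of
\[
A(v) = \begin{pmatrix} -c_2 & -c_3 & 0 \\ c_1 & 0 & -c_3 \\ 0 & c_1 & c_2 \end{pmatrix},
\]
which must lie in $Z(L)/\Fp z$. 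Since $Z(L)/\Fp z$ is at most one-dimensional, a direct analysis of the $2 \times 2$ minors of $A(v)$ forces $(c_1, c_2, c_3) = 0$. Hence $v \in L' \cap I \subseteq Z(L)$, and together with $\Fp z \subseteq Z(L)$, $\dim Z(L) \leq 2$ and $I \cap \Fp z = 0$ this forces $I$ to be a one-dimensional subspace of $Z(L)$ transverse to $\Fp z$.

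A direct solution of $[v, x_1] = [v, x_2] = [v, x_3] = 0$ from the defining relations yields $Z(L) = \Fp z$ when $b \neq 0$ and $Z(L) = \langle z,\, y_3 - a y_2 \rangle$ when $b = 0$. In the former case the only ideal from this second class is the zero ideal, contributing $t^7$. In the latter one also gets the $p$ one-dimensional subspaces of the two-dimensional $Z(L)$ that avoid $\Fp z$, contributing $p \cdot t^6$. Summing with $\zeta_{\mff_{2,3}(\Fp)}^{\triangleleft}(s)$ yields both formulas of Theorem~\ref{thm:vl}. The main obstacle is the derivation of $\zeta_{\mff_{2,3}(\Fp)}^{\triangleleft}(s)$ itself: although it is a direct application of Theorem~\ref{thm:idealc2}, it requires tabulating the pair $(|L_1 : X(\Lambda_2)|,\, \mathrm{rk}\, X(\Lambda_2))$ for each elementary divisor type of $\Lambda_2 \leq \mff_{2,3}'$ and summing several Gaussian binomial series.
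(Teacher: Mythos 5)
Your proof is correct and follows essentially the same route as the paper: you split ideals according to whether they contain $\gamma_3(L)=\Fp z$ (the paper's $m_{7,7}=1$ versus $m_{7,7}=0$ dichotomy), identify the first class with ideals of the class-$2$ quotient $\mff_{2,3}(\Fp)$, and count the second class by analysing the centre. The only difference is presentational: where the paper inspects the matrix brackets $[\bsm_i,e_j]$ and diagonal conditions to force the diagonal entries to vanish, you repackage the same computation structurally by showing $I\subseteq Z(L)$ (via the rank-$\le 1$ constraint on $A(v)$, which makes all $2\times 2$ minors $c_ic_j$ vanish) and then reading off $Z(L)=\Fp z$ for $b\neq 0$ and $Z(L)=\langle z,\,y_3-ay_2\rangle$ for $b=0$; this is arguably a cleaner way to see where the extra $p\,t^6$ term comes from, but it is the same underlying enumeration.
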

\begin{proof}
	Let \begin{align*}
		M&=\left(\begin{array}{ccc|ccc|c}
			m_{1,1} & m_{1,2}& m_{1,3}&m_{1,4}&m_{1,5}&m_{1,6}&m_{1,7} \\
			0 & m_{2,2} &m_{2,3}&m_{2,4}&m_{2,5}&m_{2,6}&m_{2,7} \\
			0 & 0 & m_{3,3} &m_{3,4}&m_{3,5}&m_{3,6}&m_{3,7}\\
			\hline 0&0&0&m_{4,4}&m_{4,5}&m_{4,6}&m_{4,7}\\
			0&0&0&0&m_{5,5}&m_{5,6}&m_{5,7}\\
			0&0&0&0&0&m_{6,6}&m_{6,7}\\
			\hline0&0&0&0&0&0&m_{7,7}
		\end{array}\right).
	\end{align*}
	We consider 2 possible values of $m_{7,7}$. Suppose $m_{7,7}=1$. Then since the class-2 quotient of $L_{(p,a,b)}(\Fp)$ is $\mff_{2,3}(\Fp)$, we get (cf. Theorem \ref{thm:fp.f2d})
	\[
	\begin{aligned}
		\zeta_{\mff_{2,3}(\Fp)}^{\triangleleft}(s)=&1+(1+p+p^{2})t+(1+p+p^{2})t^{2}+(1+p+p^{2}+p^{3})t^{3}\\
		&+(1+p+p^{2})t^{4}+(1+p+p^{2})t^{5}+t^{6}.
	\end{aligned}\]

	Now suppose $m_{7,7}=0$. We have 
	\begin{align*}
		[\bsm_{4},e_{1}]&=m_{4,4}e_{7},&[\bsm_{4},e_{2}]&=bm_{4,6}e_{7},&[\bsm_{5},e_{2}]&=bm_{5,6}e_{7},\\
		[\bsm_{6},e_{2}]&=bm_{6,6}e_{7},&[\bsm_{4},e_{3}]&=(m_{4,5}+am_{4,6})e_{7},&[\bsm_{5},e_{3}]&=(m_{5,5}+am_{5,6})e_{7},\\
		[\bsm_{6},e_{3}]&=am_{6,6}e_{7}.
	\end{align*}
	When $b\neq0$, these conditions force $m_{1,1}=m_{2,2}=\ldots=m_{6,6}=0$, giving one ideal of index $p^{7}$. This proves (1) of our theorem.
	
	When $b=0$ and $a\neq0$, these conditions force  $m_{1,1}=m_{2,2}=m_{3,3}=m_{4,4}=m_{6,6}=0$, but leave $m_{5,5}$ free:
	
	\begin{align*}
		M&=\left(\begin{array}{ccc|ccc|c}
			0 & 0& 0&0&0&0&0 \\
			0 & 0& 0&0&0&0&0 \\
			0 & 0& 0&0&0&0&0 \\
			\hline 	0 & 0& 0&0&0&0&0 \\
			0&0&0&0&m_{5,5}&m_{5,6}&m_{5,7}\\
			0 & 0& 0&0&0&0&0 \\
			\hline0&0&0&0&0&0&0
		\end{array}\right).
	\end{align*}
	If $m_{5,5}=0$, we have one ideal of index $p^{7}$ as before. If $m_{5,5}=1$, our formula gives
	\begin{align*}
		M&=\left(\begin{array}{ccc|ccc|c}
			0 & 0& 0&0&0&0&0 \\
			0 & 0& 0&0&0&0&0 \\
			0 & 0& 0&0&0&0&0 \\
			\hline 	0 & 0& 0&0&0&0&0 \\
			0&0&0&0&1&-\frac{1}{a}&m_{5,7}\\
			0 & 0& 0&0&0&0&0 \\
			\hline0&0&0&0&0&0&0
		\end{array}\right),
	\end{align*} 
	giving another $p$ ideals of index $p^{6}$ ($-\frac{1}{a}\in\Fp$ is well-defined and unique since $a\neq0$).
	
	When $a=b=0$, then these conditions force  $m_{1,1}=m_{2,2}=m_{3,3}=m_{4,4}=m_{5,5}=0$, but leave $m_{6,6}$ free:
	
	\begin{align*}
		M&=\left(\begin{array}{ccc|ccc|c}
			0 & 0& 0&0&0&0&0 \\
			0 & 0& 0&0&0&0&0 \\
			0 & 0& 0&0&0&0&0 \\
			\hline 	0 & 0& 0&0&0&0&0 \\
			0&0&0&0&0&0&0\\
			0 & 0& 0&0&0&m_{6,6}&m_{6,7} \\
			\hline0&0&0&0&0&0&0
		\end{array}\right).
	\end{align*}
	One can easily see that as before, $m_{6,6}=1$ gives $p$ ideals of index $p^6$, and $m_{6,6}=0$ gives one ideal of index $p^7$, which proves part (2).
\end{proof}

\begin{rem}\label{rem:vl.galois}
	Vaughan-Lee proved in \cite[Theorem 2]{VL/18} that if we choose integers $a,b\neq0$ coprime to $p$ such that the Galois group of the polynomial $g(T)=T^3-aT-b$ over the rationals is $S_{3}$, then the order of the automorphism group of $G_{(p,a,b)}$ is non-PORC. Our result shows that enumerating the ideals of  $L_{(p,a,b)}(\Fp)$ (or the normal subgroups of  $G_{(p,a,b)}$) does not capture this subtle non-PORC behavior of the automorphisms. Unfortunately, we do not know $\zeta_{L_(p,a,b)(\Zp)}^{\triangleleft}(s)$ to compare yet.
\end{rem}

\section{More explicit computations}\label{sec:5}
In this section, we record (without explicit proofs) more examples of various zeta functions obtained by the method developed in Section \ref{sec:2} and Section \ref{sec:3}.

\subsection{Nilpotent $\Fp$-Lie algebras}
\begin{thm}[Free class-2 nilpotent Lie rings on $d$-generators]\label{thm:fp.f2d}
	Let $\mff_{2,d}$ denote the free class-2 nilpotent Lie ring on $d$ generators. For $k\in\mathbb{N}$, let \[I_{k}:=\left\{i\in\mathbb{N}:\binom{k-1}{2}<i\leq\binom{k}{2}\right\},\] and let $\pi:\mathbb{N}\rightarrow\mathbb{N},$ where $\pi(i)=k$ for $i\in I_{k}.$ We have
	\[\begin{aligned}
		\zeta_{\mff_{2,d}(\Fp)}^{\triangleleft}(s)=&\zeta_{\mathbb{F}_{p}^{d}}(s)+p^{-(d+d')s}\\
		&+\sum_{i=1}^{d'-1}\binom{d'}{i}_{p}\, p^{-(i+\pi(i))s}\sum_{k=0}^{d-\pi(i)}\binom{d-\pi{i}}{k}_{p}\,p^{ik-(d-\pi(i)-k)s}.
	\end{aligned}\]
\end{thm}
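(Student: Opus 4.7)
The plan is to apply Theorem \ref{thm:idealc2} directly to $L = \mff_{2,d}$, where $L_1 := L/[L,L]$ has dimension $d$ and $L_2 := [L,L]$ has dimension $d' = \binom{d}{2}$ with basis $\{[x_i,x_j] : 1 \leq i < j \leq d\}$. Under these identifications, the commutator pairing is canonically the universal alternating map $\wedge : L_1 \times L_1 \to \Lambda^2 L_1 = L_2$, and by Theorem \ref{thm:idealc2},
\[\zeta_{\mff_{2,d}(\Fp)}^{\triangleleft}(s) = \zeta_{\F_p^d}(s) + p^{-(d+d')s} + \sum_{i=1}^{d'-1} A_i^\triangleleft(s),\]
so it suffices to compute each $A_i^\triangleleft(s)$ explicitly.

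Fix $\Lambda_2 \leq L_2$ of codimension $i$. An element $v \in L$ lies in $X(\Lambda_2)$ iff $[v,L] \subseteq \Lambda_2$, which happens iff $v + L_2$ lies in the radical of the induced alternating form $\bar{\phi}_{\Lambda_2}: L_1 \times L_1 \to L_2/\Lambda_2$; thus $X(\Lambda_2)/L_2 = \mathrm{rad}(\bar{\phi}_{\Lambda_2})$. The key claim is that $|L_1 : X(\Lambda_2)| = p^{\pi(i)}$ for every such $\Lambda_2$. Setting $r = \dim(L_1/\mathrm{rad}(\bar{\phi}_{\Lambda_2}))$, the form factors through $\Lambda^2(L_1/\mathrm{rad})$, and since $\bar{\phi}_{\Lambda_2}$ is surjective one has $i \leq \binom{r}{2}$, hence $r \geq \pi(i)$. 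Granting equality, $\rk(X(\Lambda_2)) = d - \pi(i)$, and since $\#\{\Lambda_2 : \nu(\Lambda_2) = i\} = \binom{d'}{i}_p$, substituting into the expression for $A_i^\triangleleft(s)$ from the proof of Theorem \ref{thm:idealc2} yields
\[A_i^\triangleleft(s) = \binom{d'}{i}_p\, p^{-(i+\pi(i))s} \sum_{k=0}^{d-\pi(i)} \binom{d-\pi(i)}{k}_p\, p^{ik-(d-\pi(i)-k)s}.\]
Summing over $i$ then produces the asserted closed form.

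The main obstacle is the uniformity claim $r = \pi(i)$ rather than merely $r \geq \pi(i)$. In low dimensions every codimension-$i$ subspace of $\Lambda^2 L_1$ forces the induced form to have the minimal admissible rank, but for larger $d$ one encounters codim-$i$ subspaces whose associated alternating form has strictly higher rank (for instance, a non-degenerate symplectic form on $\F_p^{2n}$ arises from a codim-$1$ subspace of $\Lambda^2 \F_p^{2n}$). A clean route is to partition the sum over $\nu(\Lambda_2) = i$ according to the rank of $\bar{\phi}_{\Lambda_2}$ and argue, via an orbit count on the Grassmannian $\mathrm{Gr}_i(\Lambda^2 L_1)$ under $\mathrm{GL}_d(\F_p)$, that after weighting by the inner ideal count the contributions collapse into the single term indexed by $\pi(i)$.
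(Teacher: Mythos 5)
Your choice of starting point — applying Theorem \ref{thm:idealc2} with $L_1\cong\F_p^d$, $L_2\cong\Lambda^2 L_1\cong\F_p^{d'}$, and the commutator pairing equal to the universal alternating map — is precisely the method the paper intends for this family, and the reduction of the problem to understanding the map $\Lambda_2\mapsto|L_1:X(\Lambda_2)|$ is exactly right. Moreover, your diagnosis of the gap is sharp and correct: it is \emph{not} true that every codimension-$i$ subspace $\Lambda_2\leq\Lambda^2 L_1$ produces a radical of codimension exactly $\pi(i)$, and your counterexample (a nondegenerate symplectic form on $\F_p^{2n}$ arising as the kernel functional of a hyperplane in $\Lambda^2\F_p^{2n}$) is the decisive one. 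The inequality $r\geq\pi(i)$ is all that surjectivity buys.

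However, the rescue you sketch — stratify by rank and hope the weighted contributions "collapse" onto the single $\pi(i)$ term — cannot work, and in fact the theorem as recorded appears to be \emph{false} for $d\geq 4$. To see this, note that a fixed $\Lambda_2$ with $c:=\mathrm{codim}_{L_1}(X(\Lambda_2)/L_2)$ contributes $\sum_{j=0}^{d-c}\binom{d-c}{j}_p\,p^{ij}\,t^{\,i+d-j}$ to $A_i^\triangleleft(s)$; as a polynomial in $t$ its lowest-degree term is $p^{i(d-c)}t^{\,i+c}$. Hence the coefficient of $t^{\,i+\pi(i)}$ in $A_i^\triangleleft(s)$ is $N_{\pi(i)}\cdot p^{i(d-\pi(i))}$, where $N_c$ is the number of codimension-$i$ subspaces whose radical has codimension exactly $c$, whereas the stated formula requires that coefficient to equal $\binom{d'}{i}_p\cdot p^{i(d-\pi(i))}$. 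Strata with $c>\pi(i)$ contribute nothing in that degree, so no collapse is possible; equality would force $N_{\pi(i)}=\binom{d'}{i}_p$, i.e.\ uniformity of the radical. Already for $d=4$, $i=1$ this fails: $\binom{6}{1}_p=1+p+p^2+p^3+p^4+p^5$, but the number of rank-$2$ (degenerate, nonzero) alternating forms up to scalar is the number of $\F_p$-points of the Klein quadric in $\mathbb{P}^5$, namely $1+p+2p^2+p^3+p^4$, leaving $p^5-p^2$ hyperplanes whose associated form is nondegenerate with $c=4$. The displayed formula is verifiably correct for $d\leq 3$ (where every nonzero alternating form on $\F_p^d$ is degenerate of rank $2$ and the radical dimension is determined by $i$ alone), but for $d\geq 4$ the correct $A_i^\triangleleft(s)$ must be expressed as a genuine sum over rank strata, with stratum sizes given by point counts of Pfaffian degeneracy loci. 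Your analysis thus does not merely leave a gap — it shows the claim cannot be repaired by the argument you propose, and the statement itself needs to be re-examined for $d\geq 4$.
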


\begin{exm}	\[
	\begin{aligned}
		\zeta_{\mff_{2,3}(\Fp)}^{\triangleleft}(s)=&\zeta_{\mathbb{F}_{p}^{3}}(s)+t^{6}+(1+p+p^{2})t^{3}(p+t)+(1+p+p^{2})t^{5}\\	
		=&1+(1+p+p^{2})t+(1+p+p^{2})t^{2}+(1+p+p^{2}+p^{3})t^{3}\\
		&+(1+p+p^{2})t^{4}+(1+p+p^{2})t^{5}+t^{6}.
	\end{aligned}\]
	
\end{exm}
\begin{thm} Let $H_{m}$ denote the central product of $m$ copies of Heisenberg Lie ring. Then
	\[\zeta_{H_{m}(\Fp)}^{\triangleleft}(s)=\zeta_{\mathbb{F}_{p}^{2m}}(s)+t^{2m+1}.\]
\end{thm}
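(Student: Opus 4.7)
The plan is to apply Theorem \ref{thm:idealc2} directly, since $H_m$ is a class-$2$ nilpotent Lie ring for which the invariants appearing in that formula are particularly easy to identify.

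First I would unpack the structure of the central product $H_m$. Writing $H_m = \langle x_1,y_1,\ldots,x_m,y_m,z \mid [x_i,y_i] = z,\; 1 \leq i \leq m\rangle$, with all other brackets trivial, one sees that $H_m$ has dimension $2m+1$, derived subalgebra $H_m' = [H_m,H_m] = \langle z\rangle$ of dimension $1$, and abelian quotient $H_m/H_m' \cong \mathbb{Z}^{2m}$. Consequently, setting $L = H_m(\Fp)$, $L_1 = L/L'$ and $L_2 = L'$, Theorem \ref{thm:idealc2} applies with
\[
d_1 = \dim L_1 = 2m,\qquad d_2 = \dim L_2 = 1,\qquad d = \dim L = 2m+1.
\]

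Next I would observe that the summation $\sum_{0 < \Lambda_2 < L_2}$ in Theorem \ref{thm:idealc2} is empty: since $L_2 \cong \Fp$ is one-dimensional, there are no $\Fp$-subspaces strictly between $0$ and $L_2$. Therefore only the boundary contributions survive, namely $\Lambda_2 = L_2$ and $\Lambda_2 = 0$. The former case contributes $\zeta_{\Fp^{2m}}(s)$, corresponding to ideals containing the centre $\langle z\rangle$ (these are in bijection with the $\Fp$-subspaces of the abelian quotient $L/L_2 \cong \Fp^{2m}$). The latter case forces $\Lambda_1 \leq X(0)/0 = Z(L) = \langle z\rangle \subseteq L_2$, hence $\Lambda_1 = 0$ in $L/L_2$, and yields only the trivial ideal $\Lambda = 0$ of index $p^{2m+1}$, contributing $p^{-(2m+1)s} = t^{2m+1}$.

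Combining these two contributions gives
\[
\zeta_{H_m(\Fp)}^{\triangleleft}(s) = \zeta_{\Fp^{2m}}(s) + t^{2m+1},
\]
as desired. There is no genuine obstacle in this argument beyond the initial identification of $H_m$'s central-product structure with the $d_2 = 1$ case of Theorem \ref{thm:idealc2}; the computation is essentially a sanity check that the Voll-type decomposition collapses to its extremal terms whenever the derived subalgebra is one-dimensional.
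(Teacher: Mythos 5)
Your proof is correct and is essentially the approach the paper intends: the statement appears in the subsection of results recorded ``without explicit proofs,'' so there is no written proof in the paper to compare against, but applying Theorem~\ref{thm:idealc2} with $d_1=2m$, $d_2=1$ and noting that the middle sum over $0<\Lambda_2<L_2$ is vacuous because $L_2\cong\Fp$ has no proper nonzero subspaces is exactly the natural instantiation of the paper's class-2 machinery. Your identification of $Z(H_m(\Fp))=\langle z\rangle=L_2$ (which is what collapses the $\Lambda_2=0$ case to the single ideal $\Lambda=0$) and of the $\Lambda_2=L_2$ case with subspaces of the abelian quotient $\Fp^{2m}$ is correct, and the bookkeeping with the $|L_2:\Lambda_2|^{\rk(\Lambda_1)}$-to-$1$ fibre count (which is $p^{1\cdot 0}=1$ here) is handled properly.
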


\begin{thm}
	Let 
	\[
	g_{5,3}=\langle x_{1},\,x_{2},\,x_{3},\,x_{4},\,x_{5}\mid[x_{1},x_{2}]=x_{4},\,[x_{1},x_{4}]=[x_{2},x_{3}]=x_{5}\rangle.
	\]
	Then
	\[
	\begin{aligned}\zeta_{g_{5,3}(\Fp)}^{\triangleleft}(s) & =\zeta_{\mathbb{F}_{p}^{3}}(s)+pt^{3}+t^{4}+t^{5}.
	\end{aligned}
	\]
\end{thm}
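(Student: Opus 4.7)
The plan is to apply the three-step procedure following Theorem~\ref{thm:alg.v.fp}, exploiting the narrow derived subalgebra of $g_{5,3}$. First I fix a basis $(e_1,e_2,e_3,e_4,e_5)=(x_1,x_2,x_3,x_4,x_5)$ adapted to the lower central series: $\gamma_1/\gamma_2=\langle e_1,e_2,e_3\rangle$, $\gamma_2/\gamma_3=\langle e_4\rangle$ and $\gamma_3=\langle e_5\rangle$, so $(d_1,d_2,d_3)=(3,1,1)$. The only non-trivial brackets are $[e_1,e_2]=e_4$, $[e_1,e_4]=e_5$ and $[e_2,e_3]=e_5$; every bracket lands in $[L,L]=\langle e_4,e_5\rangle$.

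Next I derive the diagonal conditions for a reduced-form matrix $M=(m_{i,j})$ to represent an ideal. Scanning the three non-trivial brackets, $[\bsm_i,e_j]$ acquires a non-zero $e_4$-coefficient whenever $m_{1,1}=1$ or $m_{2,2}=1$ (via $[e_1,e_2]=e_4$), forcing $m_{4,4}=1$; and a non-zero $e_5$-coefficient whenever any of $m_{1,1},m_{2,2},m_{3,3},m_{4,4}$ equals $1$ (via $[e_1,e_4]$ or $[e_2,e_3]$), forcing $m_{5,5}=1$. The resulting conditions $m_{4,4}\mid m_{1,1},m_{2,2}$ and $m_{5,5}\mid m_{1,1},m_{2,2},m_{3,3},m_{4,4}$ cut the $2^5=32$ a priori diagonal types down to $11$ admissible ones.

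For each admissible type, every bracket $[\bsm_i,e_j]$ lies in $\langle e_4,e_5\rangle$, which is itself contained in the admissible row-span (whenever $e_4$ or $e_5$ is required, the corresponding pivot is present by the diagonal conditions). Consequently every choice of the free entries $m_{i,j}$ (those with $m_{i,i}=1$, $m_{j,j}=0$, $i<j$) produces an ideal, and the contribution of a type is simply $p^f$ with $f$ the number of such free entries. Grouping by index $p^{\#\{i:\,m_{i,i}=0\}}$: index $1$ contributes $1$; at index $p$, the types $(0,1,1,1,1),(1,0,1,1,1),(1,1,0,1,1)$ contribute $1,p,p^2$; at index $p^2$, the types $(0,0,1,1,1),(0,1,0,1,1),(1,0,0,1,1)$ contribute $1,p,p^2$; at index $p^3$, the types $(0,0,0,1,1),(0,0,1,0,1)$ contribute $1,p$ (the latter with single free entry $m_{3,4}$); and the unique types at indices $p^4$ and $p^5$ each contribute $1$. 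Summing yields $\zeta_{\mathbb{F}_p^3}(s)+pt^3+t^4+t^5$.

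The main obstacle of the general procedure, solving the polynomial systems $g_{ijk}^{\triangleleft}(m_{r,s})=0$ from the proof of Theorem~\ref{thm:alg.v.fp}, collapses completely here: because $[L,L]$ is only two-dimensional and is always contained in the admissible row-span, the ideal-admissibility equations are vacuously satisfied once the diagonal conditions hold, so no non-trivial variety arises and the only actual work is the small combinatorial enumeration above.
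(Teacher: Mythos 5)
The paper states this result in Subsection~3.5 ``without explicit proofs,'' so there is no paper proof to compare against; your task was effectively to supply one via the machinery of Lemma~\ref{lem:matrix}, Corollary~\ref{cor:matrix} and Theorem~\ref{thm:alg.v.fp}. Your proof does this correctly and in exactly the spirit of the paper's worked examples (Theorems~\ref{thm:H.theory}, \ref{thm:fp.mc.ideal}, \ref{thm:fp.mc}): read off the diagonal conditions from the structure constants, enumerate the surviving diagonal types, and count free entries. The diagonal conditions you extract ($m_{4,4}\mid m_{1,1},m_{2,2}$ and $m_{5,5}\mid m_{1,1},m_{2,2},m_{3,3},m_{4,4}$) are right, the eleven admissible types and their free-entry counts are right, and the final sum $1+(1+p+p^2)t+(1+p+p^2)t^2+(1+p)t^3+t^4+t^5=\zeta_{\Fp^3}(s)+pt^3+t^4+t^5$ matches.

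One sentence in the justification is imprecise as literally written. You assert that $\langle e_4,e_5\rangle$ ``is itself contained in the admissible row-span.'' For the type $(0,0,1,0,1)$ the row span is $\langle e_3+m_{3,4}e_4,\,e_5\rangle$, which does not contain $e_4$, so the containment is false there. What is true, and what your parenthetical is gesturing at, is the weaker statement that you actually need: for an admissible type, every bracket $[\bsm_i,e_j]$ lands inside the row span, because an $e_4$-component can only arise from $[\bsm_1,e_2]$ or $[\bsm_2,e_1]$, which vanish unless $m_{1,1}=1$ or $m_{2,2}=1$, and the diagonal conditions then force $m_{4,4}=1$ so that $e_4$ is a pivot; while the $e_5$-component is always absorbed since $m_{5,5}=1$ whenever any bracket is nonzero. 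Rephrasing that sentence to say ``every bracket lies in the row span'' rather than ``$\langle e_4,e_5\rangle$ lies in the row span'' would make the argument airtight. With that small correction the proof is complete and matches what the paper's method produces.
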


\begin{thm}
	Let 
	\[
	g_{6,4}=\langle x_{1},\,x_{2},\,x_{3},\,x_{4},\,y_{1},\,y_{2}\mid[x_{1},x_{2}]=y_{1},\,[x_{1},x_{3}]=[x_{2},x_{4}]=y_{2}\rangle.
	\]
	Then
	\[
	\begin{aligned}\zeta_{g_{6,4}(\Fp)}^{\triangleleft}(s) & =1+(1+p+p^{2}+p^{3})t+(1+p+2p^{2}+p^{3}+p^{4})t^{2}\\
		& +(1+p+2p^{2}+p^{3})t^{3}+(1+p+p^{2})t^{4}\\
		& +(1+p)t^{5}+t^{6}.
	\end{aligned}
	.
	\]
\end{thm}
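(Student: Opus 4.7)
Since $g_{6,4}$ is a class-$2$ nilpotent Lie ring with $L_{1} = \langle x_1, x_2, x_3, x_4\rangle$ and $L_{2} = [L, L] = \langle y_1, y_2\rangle$, we have $d_{1} = 4$, $d_{2} = 2$ and $d = 6$; the plan is to apply Theorem \ref{thm:idealc2}. The whole computation reduces to determining, for each $1$-dimensional $\Lambda_{2} \leq L_{2}(\Fp)$, the admissibility space $X(\Lambda_{2})/\Lambda_{2} = Z(L(\Fp)/\Lambda_{2})$, after which one only needs a routine count of $\Lambda_{1} \leq X(\Lambda_{2})$ weighted by Gaussian binomial coefficients.

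Writing $a = \sum_{i=1}^{4} a_{i} x_{i} \in L_{1}$ and using the relations $[x_{1}, x_{2}] = y_{1}$ and $[x_{1}, x_{3}] = [x_{2}, x_{4}] = y_{2}$, a direct calculation yields
\begin{align*}
[a, x_{1}] &= -a_{2} y_{1} - a_{3} y_{2}, & [a, x_{2}] &= a_{1} y_{1} - a_{4} y_{2},\\
[a, x_{3}] &= a_{1} y_{2}, & [a, x_{4}] &= a_{2} y_{2}.
\end{align*}
Parametrise the $p+1$ lines of $L_{2}(\Fp)$ as $\langle y_{2}\rangle$ together with $\langle y_{1} + \alpha y_{2}\rangle$ for $\alpha \in \Fp$. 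For $\Lambda_{2} = \langle y_{2}\rangle$, demanding that every $[a, x_{j}]$ have zero $y_{1}$-component forces $a_{1} = a_{2} = 0$ while leaving $a_{3}, a_{4}$ free, so $X(\Lambda_{2}) = \langle x_{3}, x_{4}\rangle$ has dimension $2$. For each $\Lambda_{2} = \langle y_{1} + \alpha y_{2}\rangle$, the conditions coming from $[a, x_{3}]$ and $[a, x_{4}]$ immediately force $a_{1} = a_{2} = 0$, after which the remaining two brackets force $a_{3} = a_{4} = 0$; hence $X(\Lambda_{2}) = 0$ for all $p$ such lines.

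Substituting into Theorem \ref{thm:idealc2}, each of the $p$ lines with $X = 0$ contributes $p^{-s}\cdot p^{-4s} = t^{5}$, summing to $p\,t^{5}$. The distinguished line $\langle y_{2}\rangle$ contributes $p^{-s}\cdot p^{-2s}$ times the inner sum
\[
\sum_{\Lambda_{1} \leq \langle x_{3}, x_{4}\rangle} p^{\rk(\Lambda_{1})} |X(\Lambda_{2}) : \Lambda_{1}|^{-s} \;=\; p^{2} + p(1+p)\, t + t^{2},
\]
expanding to $p^{2} t^{3} + (p+p^{2})\, t^{4} + t^{5}$. Adding these to $\zeta_{\Fp^{4}}(s) + t^{6}$ and collecting the coefficients of $t^{k}$ for $0 \leq k \leq 6$ recovers the claimed expression.

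No essential difficulty is expected: the genuine work is packaged inside Theorem \ref{thm:idealc2}, and what remains is elementary linear algebra over $\Fp$. The one conceptual point is the asymmetry in the presentation between $y_{1}$ (which appears in one bracket relation) and $y_{2}$ (which appears in two), which is precisely what singles out the line $\langle y_{2}\rangle$ from the remaining $p$ lines of $L_{2}(\Fp)$ and produces the extra $p^{2} t^{3} + (p+p^{2}) t^{4} + t^{5}$ contribution.
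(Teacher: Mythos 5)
Your proof is correct, and it follows the natural route the paper sets up for class-$2$ Lie rings: apply Theorem \ref{thm:idealc2}, identify $X(\Lambda_2)$ for each of the $p+1$ lines $\Lambda_2 < L_2(\Fp)$, and sum the resulting contributions. The paper itself records this theorem in Subsection 3.5 without a proof, so there is no proof to compare against directly; but your bracket computations $[a,x_1]=-a_2y_1-a_3y_2$, $[a,x_2]=a_1y_1-a_4y_2$, $[a,x_3]=a_1y_2$, $[a,x_4]=a_2y_2$ are right, the identification $X(\langle y_2\rangle)=\langle x_3,x_4\rangle$ and $X(\langle y_1+\alpha y_2\rangle)=0$ is right, and the resulting tally $\zeta_{\Fp^4}(s)+t^6+p\,t^5+p^2t^3+(p+p^2)t^4+t^5$ collects correctly into the claimed polynomial.
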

\subsection{Graded ideal zeta functions of 26 fundamental Lie algebras over $\Fp$ }

In Table \ref{tab:graded} we present $\Fp$-approximations of local graded ideal zeta functions associated with the 26 fundamental graded Lie algebras of dimension at most 6. The first column gives the names of associated $\C$-algebras as in \cite{Kuzmich/99}. One can compare this table with Rossmann's results \cite[Table 2]{Rossmann/16}. 
\begin{table}[htb!]
	\centering
	\begin{tabular}{c|l}
		$L$ & $\zeta_{L(\Fp)}^{\idealgr}(s)$ \\\hline		
		$m3\_2$ & $\zeta_{\Fp^2}(s)+t^3$ \\	
		$m4\_2$ & $\zeta_{\Fp^3}(s)+t^3+t^4$ \\
		$m4\_3$ & $\zeta_{\Fp^2}(s)+t^3+t^4$ \\
		$m5\_2\_1$ & $\zeta_{\Fp^3}(s)+(1+p)t^3+(1+p)t^4+t^5$ \\
		$m5\_2\_2$	 & $\zeta_{\Fp^4}(s)+t^3+(1+p)t^4+t^5$ \\
		$m5\_2\_3$	 & $\zeta_{\Fp^4}(s)+t^5$ \\
		$m5\_3\_1$	 & $\zeta_{\Fp^2}(s)+t^3+(1+p)t^4+t^5$ \\
		$m5\_3\_2$	 & $\zeta_{\Fp^3}(s)+t^3+2t^4+t^5$ \\
		$m5\_4\_1$	& $\zeta_{\Fp^2}(s)+t^3+t^4+t^5$ \\
		$m6\_2\_1$	& $\zeta_{\Fp^3}(s)+(1+p+p^2)t^3+(1+p+p^2)t^4+(1+p+p^2)t^5+t^6$ \\
		$m6\_2\_2$	& $\zeta_{\Fp^4}(s)+(1+p)t^3+(1+2p+p^2)t^4+(2+p)t^5+t^6$ \\
		$m6\_2\_3$	& $\zeta_{\Fp^4}(s)+2t^3+2(1+p)t^4+(1+p)t^5+t^6$ \\
		$m6\_2\_4$	 & $\zeta_{\Fp^4}(s)+t^3+(1+p)t^4+(1+p)t^5+t^6$ \\
		$m6\_2\_5$	& $\zeta_{\Fp^5}(s)+t^3+(1+p+p^2)t^4+(1+p+p^2)t^5+t^6$ \\
		$m6\_2\_6$	 & $\zeta_{\Fp^5}(s)+t^5+t^6$ \\
		$m6\_3\_1$	 & $\zeta_{\Fp^3}(s)+t^3+(2+p)t^4+(2+p)t^5+t^6$ \\
		$m6\_3\_2$	& $\zeta_{\Fp^3}(s)+(1+p)t^(2+p)t^4+2t^5+t^6$ \\
		$m6\_3\_3$	 & same as for $m6\_3\_2$ \\
		$m6\_3\_4$	& $\zeta_{\Fp^3}(s)+(1+p)t^3+(1+p)t^4+t^5+t^6$ \\
		$m6\_3\_5$	 & same as for $m6\_3\_4$ \\
		$m6\_3\_6$	& $\zeta_{\Fp^4}(s)+t^3+(2+p)t^4+(2+p)t^5+t^6$ \\
		$m6\_4\_1$	 & $\zeta_{\Fp^2}(s)+t^3+(1+p)t^4+2t^5+t^6$ \\
		$m6\_4\_2$	& $\zeta_{\Fp^2}(s)+t^3+(1+p)t^4+t^5+t^6$ \\
		$m6\_4\_3$	 & $\zeta_{\Fp^2}(s)+t^3+2t^4+2t^5+t^6$ \\
		$m6\_5\_1$	 & $\zeta_{\Fp^2}(s)+t^3+t^4+t^5+t^6$ \\
		$m6\_5\_2$	& same as for $m6\_5\_1$ \\
	\end{tabular}
	\medskip
	\caption{$\zeta_{L(\Fp)}^{\idealgr}(s)$ for 26 fundamental graded Lie algebras}
	\label{tab:graded}
\end{table}

\subsection{Solvable $\Fp$-Lie algebras $\textrm{tr}_{n}$}\label{subsec:solv}
Here we also record the ideal zeta functions of solvable but non-nilpotent $\Fp$-Lie algebras $\textrm{tr}_{n}$. 

For $n\in\N$, let $\textrm{tr}_{n}$ denote the set of $n\times n$ upper-triangular matrices with the usual Lie bracket $[x,y]=xy-yx$ for $x,y\in\textrm{tr}_{n}$. This Lie ring is solvable, but not nilpotent. In \cite{Woodward/08}, Woodward provided a general formula for $\zeta_{\textrm{tr}_{n}(\Zp)}^{\triangleleft}(s)$, with explicit computations for $n=1,2,3,$ and $4$. In particular he proved that   $\zeta_{\textrm{tr}_{n}}^{\triangleleft}(s)$ is $\Zp$-uniform for all $n\in\N$.

Motivated by these results we record the following computations:

\begin{thm}\label{thm:tr(n)}
	\begin{align*}
		\zeta_{\textrm{tr}_{1}(\Fp)}^{\triangleleft}(s)&=\zeta_{\Fp}(s)=1+t,\\
		\zeta_{\textrm{tr}_{2}(\Fp)}^{\triangleleft}(s)&=\zeta_{\Fp^2}(s)+t^2\zeta_{\Fp}(s)=1+(1+p)t+2t^2+t^3,\\
		\zeta_{\textrm{tr}_{3}(\Fp)}^{\triangleleft}(s)&=\zeta_{\Fp^3}(s)+2t^2\zeta_{\Fp^2}(s)+t^4\zeta_{\Fp}(s)+t^5\zeta_{\Fp}(s)\\
		&=1+(1+p+p^2)t+(3+p+p^2)t^2+(3+2p)t^3+3t^4+2t^5+t^6,\\
		\zeta_{\textrm{tr}_{4}(\Fp)}^{\triangleleft}(s)&=\zeta_{\Fp^4}(s)+3t^2\zeta_{\Fp^3}(s)+3t^4\zeta_{\Fp^2}(s)+t^6\zeta_{\Fp}(s)+2t^5(\zeta_{\Fp^2}(s)+t^2\zeta_{\Fp}(s))\\
		&+t^8\zeta_{\Fp}(s)+t^9\zeta_{\Fp}(s)\\
		&=1+(1+p+p^2+p^3)t+(4+p+2p^2+p^3+p^4)t^2+(4+4p+4p^2+p^3)t^3\\
		&+(7+3p+3p^2)t^4+(8+3p)t^5+(6+2p)t^6+5t^6+3t^8+2t^9+t^{10}.
	\end{align*}
\end{thm}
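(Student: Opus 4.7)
The plan is to apply the general method of Section~2 case by case, exploiting the natural short exact sequence $0 \to \mathfrak{n}_n \to \textrm{tr}_n \to \Fp^n \to 0$, where $\mathfrak{n}_n$ denotes the strictly upper triangular ideal. I would fix a basis listing the $n$ diagonal matrix units $e_{11}, \ldots, e_{nn}$ first and the off-diagonal basis elements $e_{jk}$ ($j<k$) afterwards. Every ideal $I \triangleleft \textrm{tr}_n(\Fp)$ then decomposes as a pair $(N, D)$, where $N := I \cap \mathfrak{n}_n(\Fp)$ is itself a $\textrm{tr}_n$-ideal inside $\mathfrak{n}_n(\Fp)$ and $D := (I + \mathfrak{n}_n(\Fp))/\mathfrak{n}_n(\Fp)$ is a subspace of $\Fp^n$. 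The bracket formula $[e_{ii}, e_{jk}] = (\delta_{ij} - \delta_{ik}) e_{jk}$ imposes the compatibility condition $(d_j - d_k) e_{jk} \in N$ for every $\sum_i d_i e_{ii}$ whose class lies in $D$ and every $j<k$; this is precisely the diagonal-condition flavour described at the start of Section~3, adapted to the non-nilpotent setting.

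The enumeration is organised by first listing the $\textrm{tr}_n$-ideals $N \subseteq \mathfrak{n}_n(\Fp)$, and then, for each $N$, counting compatible diagonal subspaces $D \subseteq \Fp^n$ together with the valid lifts of $D$ into $\textrm{tr}_n(\Fp)/N$ closing under $[\,\cdot\,, \textrm{tr}_n(\Fp)]$. The lift condition reduces to a linear system in the lift parameters of exactly the form $g_{ijk}^{\triangleleft}(m_{r,s}) = 0$ appearing in the proof of Theorem~\ref{thm:alg.v.fp}. For $n=1$ the Lie ring is abelian and the formula is immediate. The $n=2$ computation proceeds exactly as in the Heisenberg-style analysis and yields $\zeta_{\Fp^2}(s) + t^2 \zeta_{\Fp}(s)$. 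For $n=3,4$ the same procedure applies but with more cases. The striking factorised shape of the stated answer, in which each summand has the form $t^k \zeta_{\Fp^m}(s)$, suggests that each choice of $N$ contributes exactly a $\zeta_{\Fp^m}(s)$-worth of compatible pairs $(D,\text{lift})$, with the prefactor $t^k$ absorbing the index contributed by $\mathfrak{n}_n(\Fp)/N$ together with the lift multiplicity. The mixed term $2t^5(\zeta_{\Fp^2}(s) + t^2 \zeta_{\Fp}(s))$ in the $n=4$ formula is especially suggestive: the factor $2$ should come from a symmetric pair of $\textrm{tr}_4$-ideals of the same numerical type in $\mathfrak{n}_4(\Fp)$, and the internal factor is precisely $\zeta_{\textrm{tr}_2(\Fp)}^{\triangleleft}(s)$, hinting at an inductive block-decomposition along the $2{+}2$ partition of $4$.

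The main obstacle is the combinatorial bookkeeping for $n=4$. The radical $\mathfrak{n}_4$ is $6$-dimensional and its lattice of $\textrm{tr}_4$-ideals is intricate; without a clean structural shortcut, one must enumerate these ideals explicitly and then solve the corresponding linear systems. Woodward's uniform formula for $\zeta_{\textrm{tr}_n(\Zp)}^{\triangleleft}(s)$ in \cite{Woodward/08} provides an independent sanity check, since the coefficient of each $t^i$ in $\zeta_{\textrm{tr}_n(\Fp)}^{\triangleleft}(s)$ must agree with the number of ideals of index $p^i$ predicted by Woodward's formula at the bottom of the lattice. Beyond these checks the computation is routine but lengthy, best carried out by a direct implementation of the three-step procedure described at the end of Section~2.
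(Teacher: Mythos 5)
Your short-exact-sequence decomposition $0 \to \mathfrak{n}_n \to \textrm{tr}_n \to \Fp^n \to 0$ is a valid and natural refinement of the paper's general three-step procedure, and it is well-suited to $\textrm{tr}_n$ since the off-diagonal matrix units $e_{jk}$ are simultaneous eigenvectors with pairwise distinct weights for the diagonal torus. The paper only records Theorem~\ref{thm:tr(n)} ``without explicit proofs'' as an application of the same Section~2 method, so there is no concrete paper proof to compare against, but your organisation by the nilradical is the obvious analogue, in the soluble setting, of the ``choice of basis along the lower central series'' trick that the paper recommends for nilpotent examples. One can make the decomposition precise as follows: because the $e_{jk}$ have distinct weights, every $\textrm{tr}_n$-ideal $N$ in $\mathfrak{n}_n$ is automatically a coordinate subspace, i.e.\ the span of the $e_{jk}$ lying in an \emph{up-set} of the interval-inclusion poset on $\{(j,k): j<k\}$; the constraint on $D$ then decouples cleanly from the constraint on the lift map $\phi : D \to \mathfrak{n}_n/N$, because $[\phi(d),e_{jk}]$ has no $e_{jk}$-component (a weight-zero vector would be needed, and $\mathfrak{n}_n$ has none). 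This justifies reading off the compatibility condition on $D$ purely from which $e_{jk}$ are missing from $N$, and then solving a genuinely linear system for $\phi$, which is what you claim.

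Two small inaccuracies in your heuristics, though neither affects the validity of the method. First, ``the prefactor $t^k$ absorbing \dots the lift multiplicity'' is slightly misleading: for all $N$ arising in $n\le 4$ the admissible lift is unique, and the prefactor is $t^{\operatorname{codim}_{\mathfrak{n}_n}(N) + (n-m)}$, where $m$ is the dimension of the space of compatible diagonal vectors; no extra degrees of freedom from $\phi$ enter. Second, the term $2t^5\bigl(\zeta_{\Fp^2}(s)+t^2\zeta_{\Fp}(s)\bigr)$ does \emph{not} come from just two ideals of $\mathfrak{n}_4$; it is the aggregate of the two codimension-$3$ ideals whose complement is $\{e_{12},e_{23},e_{13}\}$ or $\{e_{23},e_{34},e_{24}\}$ (each contributing $t^5\zeta_{\Fp^2}$) together with the two codimension-$4$ ideals adjacent to them (each contributing $t^7\zeta_{\Fp}$). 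Your inductive-block heuristic is suggestive but not literally what happens. Finally, the claim that Woodward's formula gives a sanity check by matching coefficients of $t^i$ term-by-term is not quite right: only those $\mathbb{Z}_p$-ideals containing $pL$ are visible over $\Fp$, so the $\Fp$-coefficient is bounded above by, but need not equal, the coefficient from \cite{Woodward/08} for $i\ge 2$. One also notes a typo in the displayed expansion for $n=4$ in the statement itself: the term ``$5t^6$'' should read ``$5t^7$''.
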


\section{Further remarks and questions}\label{sec:6}
\subsection{$\Zp$- and $\Fp$-uniformity}
Although we introduced many new explicit computations,  more examples would be useful for further studies. For example, one can ask
\begin{qun}\label{qun:fcd}
	Is $\zeta_{\mff_{c,d}}^{\triangleleft}(s)$ $\Fp$-uniform for all $c$ and $d$?
\end{qun}
As we discussed, for $\Zp$-uniformity this is only known for class-2 cases ($\mff_{2,d}$) and $\mff_{3,2}$. Answering Question \ref{qun:fcd} for more values of $c$ and $d$ will help to understand the uniformity of free Lie rings $\mff_{c,d}$. Also, at the moment we have only very few explicit examples of zeta functions of non-nilpotent Lie rings or algebras. We should develop some techniques, like in Section \ref{subsec:nil.fp}, for non-nilpotent cases. This would help us to compute the zeta functions of $\Fp$-Lie algebras like $\text{sl}_3(\Fp)$ or other solvable or semisimple examples.  

Furthermore, recall that the ideal zeta function $\zeta_{L(\Zp)}^{\ideal}(s)$ of $L$ is equivalent to the zeta functions enumerating the $\Zp$-submodules of $L(\Zp)$ invariant under the \textit{adjoint representations} $\ad(L)$ of $L$. In \cite{LeeVoll2021}, Voll and the author demonstrated how the enumeration of ideals (or graded ideals) in $L(\Zp)$ can be equivalently seen as an enumeration of finite-indexed subrepresentations of integral quiver representations of quivers. We can therefore consider the ideal zeta functions of $\Fp$- of $\Fq$-Lie algebras and their graded approximations from the perspective of \textit{Quiver Grassmannians}.  

In general, one can ask
\begin{qun}
	For any Lie ring $L$, does its $\Zp$-uniformity always equal to its $\Fp$-uniformity?  
\end{qun} 
At the moment we failed to find a single Lie ring $L$ whose $\Zp$-uniformity and $\Fp$-uniformity are different yet. As stated in Remark \ref{rem:Zp.and.Fp}, it would be a major breakthrough  if we can prove that the varieties we need to understand to compute $\zeta_{L(\Fp)}^{*}(s)$ are also among those necessary for $\zeta_{L(\Zp)}^{*}(s)$.

\subsection{Enumerating subalgebras or ideals in some finite quotients of $L(\Zp)$}
For the reasons explained in Remark \ref{rem:fp.lie}, all the results in this article are on $\Fp$-Lie algebras and the corresponding $p$-groups of exponent $p$. However, in terms of the approximations of the zeta functions of $L(\Zp)$,  one can also consider the zeta functions of $L(\Z/p^k\Z)$, enumerating the number of subalgebras or ideals of some finite quotients of $L(\Zp)$, which is clearly different to $L(\mathbb{F}_{p^k})$. Theorem \ref{thm:Elliptic} and \ref{thm:Leenp8} show that for these Lie rings, $L(\Fp)=L(\Z/p\Z)$ was enough to capture the wild behavior of elliptic curve or the non-PORC behavior of cubic residue of 2 inside the ring. However, Corollary \ref{cor:fil4} shows that the first layer  $L(\Fp)$ was not enough to spot somewhat subtler difference between $M_4$ and $\text{Fil}_4$ . In fact, the author showed in \cite[Theorem 6.2.7]{Lee/19thesis} that $\zeta_{\text{Fil}_{4}(\Z/p^k\Z)}^{\triangleleft}(s)\neq\zeta_{M_{4}(\Z/p^k\Z)}^{\triangleleft}(s)$ when $k\geq3$.

To be precise, let $R=\Z/p^{k}\Z$ be a finite quotient of $\Zp$. We say two Lie rings $L$ and $M$ \textit{*-isospectral over $R$} if $L$ and $M$ are non-isomorphic but $\zeta_{L(R)}^{*}(s)=\zeta_{M(R)}^{*}(s)$. For example, in \cite[Theorem 2.55]{duSWoodward/08} du Sautoy and Woodward showed that two non-isomorphic Lie rings, denoted by $\mathfrak{g}_{6,15}$ and $\mathfrak{g}_{6,17}$, satisfy $\zeta_{\mathfrak{g}_{6,15}(\Zp)}^{\ideal}(s)=\zeta_{\mathfrak{g}_{6,17}(\Zp)}^{\ideal}(s)$. So they are $\ideal$-isospectral over $\Zp$. Corollary \ref{cor:fil4} and \cite[Theorem 6.2.7]{Lee/19thesis} tell us that $M_4$ and $\text{Fil}_4$ are  $\ideal$-isospectral over $\Fp=\Z/p\Z$ and $\Z/p^2\Z$, but not over $\Z/p^k\Z$ for $k\geq3$. Since we already know $\zeta_{M_4(\Zp)}^{\ideal}(s)\neq\zeta_{\text{Fil}_4(\Zp)}^{\ideal}(s)$, it is natural to ask how deep we need to dig in before we get a `nice' approximation of $\zeta_{L(\Zp)}^{*}(s)$.
\begin{qun}
	Under what conditions can we predict the local behavior (e.g. the $\Zp$-uniformity, the splitting behavior over $p$, or the *-isospectral over $\Z/p^{k}\Z$) of  $\zeta_{L}^{*}(s)$ just by computing $\zeta_{L(\Fp)}^{*}(s)$ or $\zeta_{L(\Z/p^{k}\Z)}^{*}(s)$ for some $k$? 
\end{qun}

Our current data seems to support the following conjectures:
\begin{con}
	If $L$ is a nilpotent class-2 Lie ring, then $\zeta_{L(\Fp)}^{*}(s)$  and $\zeta_{L(\Zp)}^{*}(s)$ show the same behavior when $p$ varies. Furthermore, two class-2 Lie rings $L$ and $M$ are \textit{*-isospectral} over $\Zp$ if they are \textit{*-isospectral} over $\Fp$ 
\end{con}
\begin{con}
	If $L$ is a nilpotent class-c Lie ring, then $\zeta_{L(\Zp)}^{*}(s)$ and $\zeta_{L(\Zp/p^{c-1}\Zp)}^{*}(s)$ will show the same behavior when $p$ varies.  Furthermore, two class-c Lie rings $L$ and $M$ are \textit{*-isospectral} over $\Zp$ if they are \textit{*-isospectral} over $\Zp/p^{c-1}\Zp$ 
\end{con}
More examples to support or disprove these conjectures would be desirable.
\section*{Acknowledgments}
This article was partially supported by the National Research Foundation of Korea (NRF) grant funded by the Korean government (MEST), No. 2019R1A6A1A10073437. Some of the results in this article were parts of the author's Ph.D. thesis \cite{Lee/19thesis} from the University of Oxford. The author gratefully acknowledge inspiring mathematical discussions with Marcus du Sautoy, Benjamin Klopsch, Josh
Mag\-li\-one, Eamonn O'Brien, Dan Segal and Christopher Voll about
the research presented in this paper.

\bibliographystyle{amsplain} 
\bibliography{Lee_Fp_5OCT20_arXiv}

\end{document}